\documentclass[12pt,twoside,leqno]{article}
\usepackage[T1]{fontenc}
\usepackage{amsfonts}
\usepackage{amsmath,amsthm}
\usepackage{amssymb,latexsym}
\usepackage{enumerate}

\theoremstyle{plain}
\newtheorem{theorem}{Theorem}[section]
\newtheorem{lemma}[theorem]{Lemma}
\newtheorem{proposition}[theorem]{Proposition}
\newtheorem{corollary}[theorem]{Corollary}

\theoremstyle{definition}

\newtheorem{definition}[theorem]{Definition}
\theoremstyle{remark}
\newtheorem{remark}[theorem]{Remark}
\newtheorem{question}[theorem]{Question}

\DeclareMathOperator{\fix}{{\rm fix}}

\DeclareMathOperator{\dom}{{\rm dom}}
\DeclareMathOperator{\ran}{{\rm ran}}
\DeclareMathOperator{\HS}{{\rm HS}}
\DeclareMathOperator{\Fn}{{\rm Fn}}
\DeclareMathOperator{\op}{{\rm op}}

\DeclareMathOperator{\TC}{{\rm TC}}

\DeclareMathOperator{\sym}{{\rm sym}}

\begin{document}
\title{Several amazing discoveries about compact metrizable spaces in $\mathbf{ZF}$}
\author{Kyriakos Keremedis, Eleftherios Tachtsis and Eliza Wajch\\
Department of Mathematics, University of the Aegean\\
Karlovassi, Samos 83200, Greece\\
kker@aegean.gr\\
Department of Statistics and Actuarial-Financial Mathematics,\\
 University of the Aegean, Karlovassi 83200, Samos, Greece\\
 ltah@aegean.gr\\
Institute of Mathematics\\
Faculty of Exact and Natural Sciences \\
Siedlce University of Natural Sciences and Humanities\\
ul. 3 Maja 54, 08-110 Siedlce, Poland\\
eliza.wajch@wp.pl}
\maketitle
\begin{abstract}
In the absence of the axiom of choice, the set-theoretic status of many natural statements about metrizable compact spaces is investigated. Some of the statements are provable in $\mathbf{ZF}$, some are shown to be independent of $\mathbf{ZF}$. For independence results, distinct models of $\mathbf{ZF}$ and permutation models of $\mathbf{ZFA}$ with transfer theorems of Pincus are applied. New symmetric models are constructed in each of which the power set of $\mathbb{R}$ is well-orderable, the Continuum Hypothesis is satisfied but a denumerable family of non-empty finite sets can fail to have a choice function, and a compact metrizable space need not be embeddable into the Tychonoff cube $[0, 1]^{\mathbb{R}}$. \medskip

\noindent\textit{Mathematics Subject Classification (2010)}: 03E25, 03E35, 54A35, 54E35, 54D30 \newline 
\textit{Keywords}: Weak forms of the Axiom of Choice, metrizable space, totally bounded metric, compact space, permutation model, symmetric model.
\end{abstract}

\section{Preliminaries}
\label{s1}
\subsection{The set-theoretic framework}
\label{s1.1}

In this paper, the intended context for reasoning and statements of theorems
is the Zermelo-Fraenkel set theory $\mathbf{ZF}$ without the axiom of choice 
$\mathbf{AC}$. The system $\mathbf{ZF+AC}$ is denoted by $\mathbf{ZFC}$. We recommend  \cite{ku1} and \cite{Ku} as a
good introduction to $\mathbf{ZF}$. To stress the fact that a result is proved in $\mathbf{ZF}$ or $\mathbf{ZF+A}$ (where $\mathbf{A}$ is a statement independent of $\mathbf{ZF}$), we shall write
at the beginning of the statements of the theorems and propositions ($%
\mathbf{ZF}$) or ($\mathbf{ZF+A}$), respectively. Apart from models of $\mathbf{ZF}$, we refer to some models
of $\mathbf{ZFA}$ (or $\text{ZF}^0$ in \cite{hr}), that is, we refer also to $\mathbf{ZF}$ with an infinite set of atoms (see \cite{j}, \cite{j1} and \cite{hr}). Our theorems proved here in $\mathbf{ZF}$ are also provable in $\mathbf{ZFA}$; however, we also mention some theorems of $\mathbf{ZF}$ that are not theorems of $\mathbf{ZFA}$. 

We denote by $\omega$ the set of all non-negative integers (i.e., finite ordinal numbers of von Neumann). As usual, if $n\in\omega$, then $n+1=n\cup\{n\}$. Members of the set $\mathbb{N}=\omega\setminus\{0\}$ are called natural numbers. The power set of a set $X$ is denoted by $\mathcal{P}(X)$. A set $X$ is called \emph{countable} if $X$ is equipotent to a subset of $\omega$. A set $X$ is called \emph{uncountable} if $X$ is not countable.  A set $X$ is \emph{finite} if $X$ is equipotent to an element of $\omega$. An \emph{infinite} set is a set which is not finite. An infinite countable set is called \emph{denumerable}. A \emph{cardinal number of von Neumann} is an initial ordinal number of von Neumann. If $X$ is a set and $\kappa$ is a non-zero cardinal number of von Neumann, then $[X]^{\kappa}$ is the family of all subsets of $X$ equipotent to $\kappa$, $[X]^{\leq\kappa}$ is the collection of all subsets of $X$ equipotent to subsets of $\kappa$, and $[X]^{<\kappa}$ is the family of all subsets of $X$ equipotent to a cardinal number of von Neumann which belongs to $\kappa$. For a set $X$, we denote by $|X|$ the cardinal number of $X$ in the sense of Definition 11.2 of \cite{j}. We recall that, in $\mathbf{ZF}$, for every set $X$, the cardinal number $|X|$ exists; however, X is equipotent to a cardinal number of von Neumann if and only if $X$ is well-orderable.  For sets $X$ and $Y$, the inequality $|X|\leq |Y|$ means that $X$ is equipotent to a subset of $Y$.

The set of all real numbers is denoted by $\mathbb{R}$ and, if it is not stated otherwise, $\mathbb{R}$ and every subspace of $\mathbb{R}$ are considered with the usual topology and with the metric induced by the standard absolute value on $\mathbb{R}$.

\subsection{Notation and basic definitions}
\label{s1.2}

In this subsection, we establish notation and recall several basic definitions.  

Let $\mathbf{X}=\langle X, d\rangle$ be a metric space.  The $d$-\textit{ball with centre $x\in X$ and radius} $r\in(0, +\infty)$ is the set 
$$B_{d}(x, r)=\{ y\in X: d(x, y)<r\}.$$
 The collection 
$$\tau(d)=\{ V\subseteq X: (\forall x\in V)(\exists n\in\omega) B_{d}(x, \frac{1}{2^n})\subseteq V\}$$
is the \textit{topology in $X$ induced by $d$}.  For a set $A\subseteq X$, let $\delta_d(A)=0$ if $A=\emptyset$, and let $\delta_d(A)=\sup\{d(x,y): x,y\in A\}$ if $A\neq \emptyset$. Then $\delta_d(A)$ is the \emph{diameter} of $A$ in $\mathbf{X}$.

\begin{definition}
\label{s1d1}
Let $\mathbf{X}=\langle X, d\rangle$ be a metric space.
\begin{enumerate}
\item[(i)] Given a real number $\varepsilon>0$, a subset $D$ of $X$ is called $\varepsilon$-\emph{dense} or an $\varepsilon$-\emph{net} in $\mathbf{X}$ if $X=\bigcup\limits_{x\in D}B_d(x, \varepsilon)$. 
\item[(ii)]  $\mathbf{X}$ is called  \emph{totally bounded} if, for every real number $\varepsilon>0$, there exists a finite $\varepsilon$-net in $\mathbf{X}$.
\item[(iii)] $\mathbf{X}$ is called  \emph{strongly totally bounded} if it admits a sequence $(D_n)_{n\in\mathbb{N}}$ such that, for every $n\in\mathbb{N}$, $D_n$ is a finite $\frac{1}{n}$-net in $\mathbf{X}$.
\item[(iv)] (Cf. \cite{kyri}.) $d$ is called \emph{strongly  totally bounded} if $\mathbf{X}$ is strongly totally bounded.
\end{enumerate}
\end{definition}

\begin{remark}
\label{s1r2}
 Every strongly totally bounded metric space is evidently totally bounded. However, it was shown in \cite[Proposition 8]{kyri} that the sentence ``Every totally bounded metric space is strongly totally bounded'' is not a theorem of $\mathbf{ZF}$.
\end{remark}

\begin{definition}
\label{s1d3}
Let  $\mathbf{X}=\langle X, \tau\rangle$  be a topological space and let  $Y\subseteq X$. Suppose that $\mathcal{B}$ is a base of $\mathbf{X}$.
\begin{enumerate}
\item[(i)] The closure of $Y$ in $\mathbf{X}$ is denoted by $\text{cl}_{\tau}(Y)$ or $\text{cl}_{\mathbf{X}}(Y)$.
\item[(ii)] $\tau|_Y=\{U\cap Y: U\in\tau\}$. $\mathbf{Y}=\langle Y, \tau|_ Y\rangle$ is the subspace of $\mathbf{X}$ with the underlying set $Y$.
\item[(iii)]  $\mathcal{B}_Y=\{U\cap Y: U\in\mathcal{B}\}$.
\end{enumerate}
\end{definition}

Clearly, in Definition \ref{s1d3} (iii), $\mathcal{B}_Y$ is a base of $\mathbf{Y}$. In Section \ref{s5}, it is shown that $\mathcal{B}_Y$ need not be equipotent to a subset of $\mathcal{B}$.

 In the sequel, boldface letters will denote metric or topological spaces (called spaces in abbreviation) and lightface letters will denote their underlying sets.

\begin{definition}
\label{s1d4} A collection $\mathcal{U}$ of subsets of a space $\mathbf{X}$ is called:
\begin{enumerate} 
\item[(i)] \emph{locally finite} if every point of $X$ has a neighbourhood meeting only finitely many members of $\mathcal{U}$;
\item[(ii)] \emph{point-finite} if every point of $X$ belongs to at most finitely many members of $\mathcal{U}$;
\item[(iii)] $\sigma$-\emph{locally finite} (respectively, $\sigma$-\emph{point-finite}) if $\mathcal{U}$ is a countable union of locally finite (respectively, point-finite)  subfamilies. 
\end{enumerate}
\end{definition}

\begin{definition}
\label{s1d5} A space $\mathbf{X}$ is called:
\begin{enumerate}
 \item[(i)]  \emph{first-countable} if every point of $X$ has a countable base of neighbourhoods;
 \item[(ii)] \emph{second-countable} if $\mathbf{X}$ has a countable base.
 \end{enumerate}
\end{definition}

Given a collection  $\{X_j: j\in J\}$ of sets, for every $i\in J$, we denote by $\pi_i$ the projection $\pi_i:\prod\limits_{j\in J}X_j\to X_i$ defined by $\pi_i(x)=x(i)$ for each $x\in\prod\limits_{j\in J}X_j$. If $\tau_j$ is a topology in $X_j$, then $\mathbf{X}=\prod\limits_{j\in J}\mathbf{X}_j$ denotes the Tychonoff product of the topological spaces $\mathbf{X}_j=\langle X_j, \tau_j\rangle$ with $j\in J$. If $\mathbf{X}_j=\mathbf{X}$ for every $j\in J$, then $\mathbf{X}^{J}=\prod\limits_{j\in J}\mathbf{X}_j$. As in \cite{En}, for an infinite set $J$ and the unit interval $[0,1]$ of $\mathbb{R}$, the cube $[0,1]^J$ is called the \emph{Tychonoff cube}. If $J$ is denumerable, then the Tychonoff cube $[0,1]^J$ is called the \emph{Hilbert cube}. In \cite{hh}, all Tychonoff cubes are called Hilbert cubes. In \cite{w}, Tychonoff cubes are called cubes. 

We recall that if $\prod\limits_{j\in J}X_j\neq\emptyset$, then it is said that the family $\{X_j: j\in J\}$ has a choice function, and every element of $\prod\limits_{j\in J}X_j$ is called a \emph{choice function} of the family $\{X_j: j\in J\}$. A \emph{multiple choice function} of $\{X_j: j\in J\}$ is every function $f\in\prod\limits_{j\in J}\mathcal{P}(X_j)$ such that, for every $j\in J$, $f(j)$ is a non-empty finite subset of $X_j$. A set $f$ is called \emph{partial} (\emph{multiple}) \emph{choice function} of $\{X_j: j\in J\}$ if there exists an infinite subset $I$ of $J$ such that $f$ is a (multiple) choice function of $\{X_j: j\in I\}$. Given a non-indexed family $\mathcal{A}$, we treat $\mathcal{A}$ as an indexed family $\mathcal{A}=\{x: x\in\mathcal{A}\}$ to speak about a  (partial) choice function and a (partial) multiple choice function of $\mathcal{A}$.

Let  $\{X_j: j\in J\}$ be a disjoint family of sets, that is, $X_i\cap X_j=\emptyset$ for each pair $i,j$ of distinct elements of $J$. If $\tau_j$ is a topology in $X_j$ for every $j\in J$, then $\bigoplus\limits_{j\in J}\mathbf{X}_j$ denotes the direct sum of the spaces $\mathbf{X}_j=\langle X_j, \tau_j\rangle$ with $j\in J$.  

\begin{definition}
\label{s1d6}
(Cf. \cite{br}, \cite{lo} and \cite{kerta}.) 
\begin{enumerate}
\item[(i)] A space $\mathbf{X}$ is said to be \emph{Loeb} (respectively, \emph{weakly Loeb}) if the family of all non-empty closed subsets of $\mathbf{X}$ has a choice function (respectively, a multiple choice function).
\item[(ii)] If $\mathbf{X}$ is a (weakly) Loeb space, then every (multiple) choice function of the family of all non-empty closed subsets of $\mathbf{X}$ is called a (\emph{weak}) \emph{Loeb function} of $\mathbf{X}$.
\end{enumerate}
\end{definition}

Other topological notions used in this article but not defined here are standard. They can be found, for instance, in \cite{En} and \cite{w}. 

\begin{definition}
\label{s1d7} A set $X$ is called:
\begin{enumerate}
\item[(i)] a \emph{cuf set} if $X$ is expressible as a countable union of finite sets (cf. \cite{hdkr}, \cite{hdhkr}, \cite{HowTach} and \cite[Form 419]{hr1});
\item[(ii)] \emph{Dedekind-finite} if $X$ is not equipotent to a proper subset of itself (cf. \cite[Note 94]{hr}, \cite[Definition 4.1]{hh} and \cite[Definition 2.6]{j}); \emph{Dedekind-infinite} if $X$ is not Dedekind-finite (equivalently, if there exists an injection $f:\omega\to X$) (cf. \cite[Note 94]{hr} and  \cite[Definition 2.13]{hh});
\item[(iii)]  \emph{amorphous} if $X$ is infinite and there does not exist a partition of $X$ into two infinite sets (cf. \cite[Note 57]{hr},\cite[p. 52]{j} and  \cite[E. 11 in Section 4.1]{hh}).
\end{enumerate}
\end{definition}

\begin{definition}
\label{s1d8}
(Cf. \cite{kw1}.) A topological space $\langle X, \tau\rangle$ is called a \emph{cuf space} if $X$ is a cuf set. 
\end{definition}

\subsection{The list of weaker forms of $\mathbf{AC}$}
\label{s1.3}

In this subsection, for readers' convenience, we define and denote most of the weaker forms of $\mathbf{AC}$ used directly in this paper. If a form is not defined in the forthcoming sections, its definition can be found in this subsection. For the known forms given in \cite{hr}, \cite{hr1} or \cite{hh}, we quote in their statements the form number under which they are recorded in \cite{hr} (or in \cite{hr1} if they do not appear in \cite{hr}) and, if possible, we refer to their definitions in \cite{hh}. 

\begin{definition}
\label{s1d9}
\begin{enumerate}
\item $\mathbf{AC}_{fin}$ (\cite[Form 62]{hr}): Every non-empty family of non-empty finite sets has a choice function.
\item $\mathbf{AC}_{WO}$ ( \cite[Form 60]{hr}): Every non-empty family of non-empty well-orderable sets has a choice function. 
\item $\mathbf{CAC}$ (\cite[Form 8]{hr}, \cite[Definition 2.5]{hh}): Every denumerable family of non-empty sets has a choice function.

\item  $\mathbf{CAC}(\mathbb{R})$ (\cite[Form 94]{hr}, \cite[Definition 2.9(1)]{hh}): Every denumerable family of non-empty subsets of $\mathbb{R}$ has a choice function.

\item $\mathbf{CAC}_{\bigtriangleup \omega }(\mathbb{R})$ (Cf. \cite{keremTac}): For every family $%
\mathcal{A}=\{A_n: n\in\omega\}$ such that, for every $n\in\omega$ and all $x,y\in A_n$, $\emptyset\neq A_n\subseteq\mathcal{P}(\omega)\setminus\{\emptyset\}$ and $x\bigtriangleup y\in[\omega]^{<\omega} $ ($\bigtriangleup $
denotes the operation of symmetric difference between sets), there exists a choice function of $\mathcal{A}$.

\item $\mathbf{IDI}$ (\cite[Form 9]{hr}, \cite[Definition 2.13(ii)]{hh}): Every Dedekind-finite set is finite.

\item $\mathbf{IDI}(\mathbb{R})$ (\cite[Form 13]{hr}, \cite[Definition 2.13(2)]{hh}): Every Dedekind-finite subset of $\mathbb{R}$ is finite.

\item $\mathbf{WoAm}$ (\cite[Form 133]{hr}): Every set  is either well-orderable or has an amorphous subset.

\item $\mathbf{Part}(\mathbb{R})$: Every partition of $\mathbb{R}$ is of size $\leq |\mathbb{R}|$.

\item $\mathbf{WO}(\mathbb{R})$ (\cite[Form 79]{hr}): $\mathbb{R}$ is well-orderable. 

\item $\mathbf{WO}(\mathcal{P}(\mathbb{R}))$ (\cite[Form 130]{hr}): $\mathcal{P}(\mathbb{R})$ is well-orderable. 

\item $\mathbf{CAC}_{fin}$ (\cite[Form 10]{hr}, \cite[Definition 2.9(3)]{hh}): Every denumerable family of non-empty finite sets has a choice function.

\item For a fixed $n\in\omega\setminus \{0, 1\}$, $\mathbf{CAC}_n$ (\cite[Form 288(n)]{hr}): Every denumerable family of $n$-element sets has a choice function. 

\item $\mathbf{CAC}_{WO}$: Every denumerable family of non-empty well-orderable sets has a choice function.

\item $\mathbf{CMC}$ (\cite[Form 126]{hr}, \cite[Definition 2.10]{hh}): Every denumerable family of non-empty sets has a multiple choice function.

\item $\mathbf{CMC}_{\omega }$ (\cite[Form 350]{hr}): Every denumerable family of denumerable sets has a multiple choice function.

\item $\mathbf{CUC}$ (\cite[Form 31]{hr}, \cite[Definition 3.2(1)]{hh}): Every countable union of
countable sets is countable.

\item $\mathbf{CUC}_{fin}$ (\cite[Form \text{[10 A]}]{hr}, \cite[Definition 3.2(3)]{hh}): Every countable union of finite sets is countable.

\item $\mathbf{UT}(\aleph_0, cuf, cuf)$ (\cite[Form 419]{hr1}): Every countable union of cuf sets is a cuf set. (Cf. also \cite{hdhkr}.)

\item $\mathbf{UT}(\aleph_0, \aleph_0, cuf)$ (\cite[Form 420]{hr1}): Every countable union of countable sets is a cuf set. (Cf. also \cite{hdhkr}.)

\item $\mathbf{vDCP}(\aleph_0)$ ( \cite[Form 119]{hr}, \cite[p, 79]{hh}, \cite{vD}): Every denumerable family $\{\langle A_n, \leq_n\rangle: n\in\omega\}$ of linearly ordered sets, each of which is order-isomorphic to the set $\langle \mathbb{Z}, \leq\rangle$ of integers with the standard linear order $\leq$, has a choice function.

\item $\mathbf{BPI}$ (\cite[Form 14]{hr}, \cite[Definition 2.15(1)]{hh}):  Every Boolean algebra has a prime ideal. 

\item $\mathbf{DC}$ (\cite[Form 43]{hr}, \cite[Definition 2.11(1)]{hh}): For every non-empty set $X$ and every binary relation $\rho$ on $X$ if, for each $x\in X$ there exists $y\in X$ such that $x\rho y$, then  there exists a sequence $(x_n)_{n\in\mathbb{N}}$ of points of $X$ such that $x_n\rho x_{n+1}$ for each $n\in\mathbb{N}$.

\end{enumerate}
\end{definition}

\begin{remark}
\label{s1r10} 
The following are well-known facts in $\mathbf{ZF}$:
\begin{enumerate}
\item[(i)] $\mathbf{CAC}_{fin}$ and $\mathbf{CUC}_{fin}$ are both equivalent to the sentence: Every infinite well-ordered family of
non-empty finite sets has a partial choice function (see  \cite[Form \text{[10 O]}]{hr} and  \cite[Diagram 3.4, p. 23]{hh}). Moreover, 
 $\mathbf{CAC}_{fin}$ is equivalent to Form [10 E] of \cite{hr}, that is, to the sentence: Every denumerable family of non-empty finite sets has a partial choice function. It is known that $\mathbf{IDI}$ implies $\mathbf{CAC}_{fin}$ and this implication is not reversible in $\mathbf{ZF}$ (cf. \cite[pp. 324--324]{hh}).
 
\item[(ii)]  $\mathbf{CAC}$ is equivalent to the sentence: Every denumerable family of non-empty sets has a partial choice function (see  \cite[Form \text{[8 A]}]{hr}).

\item[(iii)] $\mathbf{BPI}$ is equivalent to the statement that all products of compact Hausdorff spaces are compact (see \cite[Form \text{[14 J]}]{hr} and \cite[Theorem 4.37]{hh}). 

\item[(iv)] $\mathbf{CMC}_{\omega}$ is equivalent to the following sentence: Every denumerable family of denumerable sets has a multiple choice function. 
\end{enumerate}
\end{remark}

\begin{remark}
\label{s1r11}
\begin{enumerate}
\item[(a)] It was proved in \cite{HowTach} that the following implications are true in $\mathbf{ZF}$ and none of the implications is reversible in $\mathbf{ZF}$:
$$\mathbf{CMC}\rightarrow\mathbf{UT}(\aleph_0, cuf,cuf)\rightarrow\mathbf{CMC}_{\omega}\rightarrow\mathbf{vDCP}(\aleph_0).$$

\item[(b)] Clearly, $\mathbf{UT}(\aleph_0, cuf, cuf)$ implies $\mathbf{UT}(\aleph_0, \aleph_0, cuf)$. In \cite[proof to Theorem 3.3]{hdhkr} a model of $\mathbf{ZFA}$ was shown in which  $\mathbf{UT}(\aleph_0, \aleph_0, cuf)$ is true and $\mathbf{UT}(\aleph_0, cuf,cuf)$ is false.

\item[(c)] It was proved in \cite{kw1} that the following equivalences hold in $\mathbf{ZF}$:
\begin{enumerate}
\item[(i)] $\mathbf{UT}(\aleph_0, cuf, cuf)$ is equivalent to the sentence: Every countable product of one-point Hausdorff compactifications of infinite discrete cuf spaces is metrizable (equivalently, first-countable).
\item[(ii)] $\mathbf{UT}(\aleph_0, \aleph_0, cuf)$ is equivalent to the sentence: Every countable product of one-point Hausdorff compactifications of denumerable discrete spaces is metrizable (equivalently, first-countable).
\end{enumerate}
\end{enumerate}
\end{remark}

Let us pass to definitions of forms concerning metric and metrizable spaces.

\begin{definition}
\label{s1d12}
\begin{enumerate}

\item $\mathbf{CAC}(\mathbb{R},C)$: For every disjoint family $%
\mathcal{A}=\{A_{n}:n\in \mathbb{N}\}$ of non-empty subsets of $\mathbb{R}$,
if there exists a family $\{d_{n}:n\in \mathbb{N}\}$ of metrics such that, for every $n\in \mathbb{%
N},\langle A_{n},d_{n}\rangle$ is a compact metric space, then $\mathcal{A}$ has a choice function.

\item $\mathbf{CAC}(C, M)$: If $\{\langle X_n, d_n\rangle: n\in\omega\}$ is a family of non-empty compact metric spaces, then the family $\{X_n: n\in\omega\}$ has a choice function.

\item $\mathbf{M}(TB, WO)$: For every totally bounded metric space $\langle X, d\rangle$, the set $X$ is well-orderable.

\item $\mathbf{M}(TB, S)$: Every totally bounded metric space is separable.

\item $\mathbf{ICMDI}$: Every infinite compact metrizable space is Dedekind-infinite.

\item $\mathbf{MP}$ (\cite[Form 383]{hr}): Every metrizable space is paracompact.

\item $\mathbf{M}(\sigma-p.f.)$ (\cite[Form 233]{hr}): Every metrizable space has a $\sigma$-point-finite base.

\item $\mathbf{M}(\sigma-l.f.)$ (\cite[Form \text{[232 B]}]{hr}): Every metrizable space has a $\sigma$-locally finite base.
\end{enumerate}
\end{definition} 

\begin{definition}
\label{s1d13}
The following forms will be called \emph{forms of type} $\mathbf{M}(C,\square)$.
\begin{enumerate}
\item $\mathbf{M}(C,S)$: Every compact metrizable space is separable.

\item $\mathbf{M}(C,2)$: Every compact metrizable space is second-countable.

\item $\mathbf{M}(C, STB)$: Every compact metric space is strongly totally bounded.

\item $\mathbf{M}(C,L)$: Every compact metrizable space is Loeb.

\item $\mathbf{M}(C, WO)$: Every compact metrizable space is well-orderable. 

\item $\mathbf{M}(C,\hookrightarrow \lbrack 0,1]^{\mathbb{N}})$: Every
compact metrizable space is embeddable in the Hilbert cube $[0,1]^{\mathbb{N}}$.

\item $\mathbf{M}(C,\hookrightarrow \lbrack 0,1]^{\mathbb{R}})$: Every
compact metrizable space is embeddable in the Tychonoff cube $[0,1]^{\mathbb{R}}$.

\item $\mathbf{M}(C,\leq |\mathbb{R}|)$: Every compact metrizable space is of size $\leq |\mathbb{R}|$.

\item $\mathbf{M}(C, W(\mathbb{R}))$: For every infinite compact metrizable space $\langle X, \tau\rangle$, $\tau$ and $\mathbb{R}$ are equipotent.

\item $\mathbf{M}(C, B(\mathbb{R}))$: Every compact metrizable space has a base of size $\leq|\mathbb{R}|$. 

\item $\mathbf{M}(C, |\mathcal{B}_Y|\leq |\mathcal{B}|)$: For every compact metrizable space $\mathbf{X}$, every base $\mathcal{B}$ of $\mathbf{X}$ and every compact subspace $\mathbf{Y}$ of $\mathbf{X}$,  $|\mathcal{B}_Y|\leq|\mathcal{B}|$.

\item  $\mathbf{M}([0,1], |\mathcal{B}_Y|\leq |\mathcal{B}|)$: For every base $\mathcal{B}$  of the interval $[0,1]$ with the usual topology and every compact subspace $\mathbf{Y}$ of $[0,1]$, $|\mathcal{B}_Y|\leq|\mathcal{B}|$.

\item $\mathbf{M}(C, \sigma-l.f)$: Every compact metrizable space has a $\sigma$-locally finite base.

\item $\mathbf{M}(C, \sigma-p.f)$: Every compact metrizable space has a $\sigma$-point-finite base.

\end{enumerate}
\end{definition}

The notation of type $\mathbf{M}(C, \square)$ was started in \cite{kk1} and continued in \cite{k} but not all forms from the definition above were defined in \cite{kk1} and \cite{k}. The forms $\mathbf{M}(C,L)$ and $\mathbf{M}(C, WO)$ were denoted by $CML$ and $CMWO$ in \cite{kert}. Most  forms from Definition \ref{s1d13} are new here. That the new forms $\mathbf{M}(C,\hookrightarrow \lbrack 0,1]^{\mathbb{N}})$, $\mathbf{M}(C,\hookrightarrow \lbrack 0,1]^{\mathbb{R}})$,  $\mathbf{M}(C, W(\mathbb{R}))$, $\mathbf{M}(C, B(\mathbb{R}))$,  $\mathbf{M}(C, |\mathcal{B}_Y|\leq |\mathcal{B}|)$ and  $\mathbf{M}([0,1], |\mathcal{B}_Y|\leq |\mathcal{B}|)$ are all important is shown in Section \ref{s4}.

Apart from the forms defined above, we also refer to the following forms that are not weaker than $\mathbf{AC}$ in $\mathbf{ZF}$:

\begin{definition}
\label{s1d14}
\begin{enumerate}
\item $\mathbf{LW}$ (\cite[Form 90]{hr}): For every linearly ordered set $\langle X, \leq\rangle$, the set $X$ is well-orderable.
\item $\mathbf{CH}$ (the Continuum Hypothesis): $2^{\aleph_0}=\aleph_1$.
\end{enumerate}
\end{definition}

\begin{remark}
\label{s1r15}
It is well known that $\mathbf{AC}$ and $\mathbf{LW}$ are equivalent in $\mathbf{ZF}$; however, $\mathbf{LW}$ does not imply $\mathbf{AC}$ in $\mathbf{ZFA}$ (see \cite{hr} and  \cite[Theorems 9.1 and 9.2]{j}).
\end{remark}

\section{Introduction}
\label{s2}

\subsection{The content of the article in brief}
\label{s2.1}

Although mathematicians are aware that a lot of theorems of $\mathbf{ZFC}$ that are included in standard textbooks on general topology (e.g., in \cite{En} and \cite{w}) may fail in $\mathbf{ZF}$ and many amazing disasters in topology in $\mathbf{ZF}$ have been discovered, new non-trivial results showing significant differences between truth values in $\mathbf{ZFC}$ and in $\mathbf{ZF}$ of some given propositions can be still surprising. In this article, we show new results concerning forms of type $\mathbf{M}(C, \square)$ in $\mathbf{ZF}$. The main aim of our work is to establish in $\mathbf{ZF}$ the set-theoretic strength of the forms of type $\mathbf{M}(C, \square)$, as well as relationships between these forms and relevant ones. Taking care of the readability of the article, in the forthcoming Subsections \ref{s2.2}--\ref{s2.4},  we include some known facts and few definitions for future references. In particular, in Subsection \ref{s2.4}, we give definitions of permutation models (called also Fraenkel-Mostowski models) and formulate only this version of a transfer theorem due to Pincus (called here the \emph{Pincus Transfer Theorem}) (cf. Theorem \ref{pinth}) which is applied in this article.  The main new results of the article are included in Sections \ref{s3}--\ref{s5}. Section \ref{s6} contains a list of open problems that suggest a direction for future research in this field.

In Section \ref{s3}, we construct new symmetric $\mathbf{ZF}$-models in each of which the conjunction $\mathbf{CH}\wedge\mathbf{WP}(\mathcal{P}(\mathbb{R}))\wedge\neg\mathbf{CAC}_{fin}$ is true.

 In Section \ref{s4}, we investigate relationships between the forms $\mathbf{M}(TB, WO)$, $\mathbf{M}(TB, S)$, $\mathbf{ICMDI}$ and $\mathbf{M}(C, S)$. Among other results of Section \ref{s4}, by showing appropriate permutation models and using the Pincus Transfer Theorem, we prove that the conjunctions $\mathbf{BPI}\wedge\mathbf{ICMDI}\wedge \neg\mathbf{IDI}$, $(\neg\mathbf{BPI})\wedge\mathbf{ICMDI}\wedge\neg\mathbf{IDI}$ and $\mathbf{UT}(\aleph_0,\aleph_0, cuf)\wedge\neg\mathbf{ICMDI}$ have $\mathbf{ZF}$-models (see Theorems \ref{s4t12},  \ref{s4t13} and \ref{s4t23}, respectively). We deduce that the conjunction $\mathbf{UT}(\aleph_0, \aleph_0, cuf)\wedge\neg\mathbf{M}(C, S)$ has a $\mathbf{ZF}$-model (see Corollary \ref{s4c24}). We discuss a relationship between $\mathbf{M}(C, S)$ and $\mathbf{CUC}$. Taking the opportunity, we fill in a gap in \cite{hr} and \cite{hr1} by proving that $\mathbf{WoAm}$ implies $\mathbf{CUC}$ (see Proposition \ref{s4p18}). 
 
Among a plethora of results of Section \ref{s5}, we show that $\mathbf{CAC}_{fin}$ implies neither $\mathbf{M}(C, S)$ nor $\mathbf{M}(C, \leq |\mathbb{R}|)$ (see Proposition \ref{s5p1}), and $\mathbf{M}(C, S)$ is equivalent to each one of the conjunctions: $\mathbf{CAC}_{fin}\wedge \mathbf{M}(C, \sigma-l.f.)$, $\mathbf{CAC}_{fin}\wedge\mathbf{M}(C, STB)$ and $\mathbf{CAC}(\mathbb{R}, C)\wedge\mathbf{M}(C, \leq|\mathbb{R}|)$ (see Theorems \ref{s5t02} and \ref{s5t2}, respectively).  We deduce that $\mathbf{M}(C, \sigma-l.f)$ is unprovable in $\mathbf{ZF}$ (see Remark \ref{s5r03}). We also deduce that $\mathbf{M}(C, S)$ and $\mathbf{M}(C, \leq|\mathbb{R}|)$ are equivalent in every permutation model (see Corollary \ref{s5c3}). Furthermore, we prove that $\mathbf{M}(C, S)$ and $\mathbf{M}(C, \hookrightarrow[0,1]^{\mathbb{N}})$ are equivalent (see Theorem \ref{s5t6}). We show that, surprisingly, $\mathbf{M}(C, |\mathcal{B}_Y|\leq|\mathcal{B}|)$ and $\mathbf{M}(C, B(\mathbb{R}))$ are independent of $\mathbf{ZF}$ (see Theorem \ref{s5t9}). In Theorem \ref{s5t10}, we show that $\mathbf{M}(C, B(\mathbb{R}))$ is equivalent to the conjunction $\mathbf{M}(C, \hookrightarrow [0,1]^{\mathbb{R}})\wedge\mathbf{Part}(\mathbb{R})$,  $\mathbf{CAC}(\mathbb{R})$ implies that $\mathbf{M}(C, S)$ and $\mathbf{M}(C, B(\mathbb{R}))$ are equivalent; moreover, the statement that $\mathbf{M}(C, S)$ and $\mathbf{M}(C, W(\mathbb{R}))$ are equivalent is equivalent to $\mathbf{M}(C, B(\mathbb{R}))$. The models of $\mathbf{ZF}+\mathbf{WO}(\mathcal{P}(\mathbb{R}))+\neg\mathbf{CAC}_{fin}$ constructed in Section \ref{s3} are applied to a proof that  $\mathbf{Part}(\mathbb{R})$ does not imply $\mathbf{M}(C, \hookrightarrow [0, 1]^{\mathbb{R}})$ in $\mathbf{ZF}$ (see Theorem \ref{s5t11}).

\subsection{A list of several known theorems}
\label{s2.2}

We list below some known theorems for future references.

\begin{theorem}
\label{s2t1} 
(Cf. \cite{nag}.) $\mathbf{CAC}$ implies $\mathbf{M}(TB, S)$ in $\mathbf{ZF}$.
\end{theorem}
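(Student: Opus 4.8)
The plan is to use $\mathbf{CAC}$ to select, simultaneously for all $n\in\mathbb{N}$, one finite $\frac1n$-net, and then to check that the union of the chosen nets is a countable dense set.

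First I would fix a totally bounded metric space $\mathbf{X}=\langle X,d\rangle$ and, for each $n\in\mathbb{N}$, let $\mathcal{D}_n$ be the set of all finite $\frac1n$-nets in $\mathbf{X}$. Total boundedness guarantees that each $\mathcal{D}_n$ is non-empty, so $\{\mathcal{D}_n:n\in\mathbb{N}\}$ is a denumerable family of non-empty sets. Applying $\mathbf{CAC}$ to it produces a sequence $(D_n)_{n\in\mathbb{N}}$ with each $D_n$ a finite $\frac1n$-net; in the language of Definition \ref{s1d1}(iii), this already witnesses that $\mathbf{X}$ is strongly totally bounded.

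Next I would set $D=\bigcup_{n\in\mathbb{N}}D_n$ and argue that $D$ is countable and dense. For countability, note that $\mathbf{CAC}$ implies $\mathbf{CAC}_{fin}$, which by Remark \ref{s1r10}(i) is equivalent to $\mathbf{CUC}_{fin}$; since $D$ is a countable union of finite sets, $\mathbf{CUC}_{fin}$ gives that $D$ is countable. For density, I would fix $x\in X$ and a real number $\varepsilon>0$, pick $n\in\mathbb{N}$ with $\frac1n<\varepsilon$, and use that $D_n$ is a $\frac1n$-net to find $y\in D_n$ with $d(x,y)<\frac1n<\varepsilon$, so that $y\in B_d(x,\varepsilon)\cap D$. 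Thus every ball meets $D$, whence $\text{cl}_{\mathbf{X}}(D)=X$ and $\mathbf{X}$ is separable.

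I do not expect any genuine obstacle here: the only two uses of choice are the simultaneous selection of the nets $D_n$ (a direct instance of $\mathbf{CAC}$) and the countability of their union (a consequence of $\mathbf{CUC}_{fin}$, hence of $\mathbf{CAC}$). If one wished to avoid citing $\mathbf{CUC}_{fin}$ as a separate step, one could instead apply $\mathbf{CAC}$ to the family whose $n$-th member is the set of all pairs $(D,e)$ with $D$ a finite $\frac1n$-net and $e$ an enumeration of $D$; this hands back the nets together with enumerations, from which an explicit countable enumeration of $D$ can be assembled directly.
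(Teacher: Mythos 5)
Your proof is correct, and in fact the paper offers no proof of Theorem \ref{s2t1} to compare against: it is listed among the known results and attributed to Nagata \cite{nag}. Your argument is the standard one, and each step is sound in $\mathbf{ZF}$: applying $\mathbf{CAC}$ to the $\mathbb{N}$-indexed family of sets of finite $\frac{1}{n}$-nets is legitimate (after the routine tagging/disjointification, or by noting that a choice function on the countable set $\{\mathcal{D}_n : n\in\mathbb{N}\}$ induces one on the indexed family), and the countability of $D=\bigcup_{n\in\mathbb{N}}D_n$ does require a further appeal to choice, which you correctly supply via $\mathbf{CAC}_{fin}\leftrightarrow\mathbf{CUC}_{fin}$ (Remark \ref{s1r10}(i)) or, more elegantly, via your variant that selects nets together with enumerations. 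It is worth noting that your argument is precisely the pattern the paper itself uses elsewhere: the forward direction of Theorem \ref{s5t02}(ii) derives separability from a sequence of finite $\frac{1}{n}$-nets plus $\mathbf{CAC}_{fin}$ exactly as in your second and third paragraphs, while Proposition \ref{s4p1} is the choice-free analogue of your first paragraph, in which the simultaneous selection of nets is accomplished by taking least elements under a well-ordering of $\bigcup_{n\in\mathbb{N}}(X^n\times\{n\})$ instead of by $\mathbf{CAC}$. So your proposal is both correct and faithful to the techniques the authors rely on; the only thing to flag is the (minor, standard) indexed-family technicality noted above, which you glossed but which does not affect validity.
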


\begin{theorem}
\label{s2t2}
(Cf. \cite{k}.)$(\mathbf{ZF})$
\begin{enumerate}
\item[(i)] Let $\mathbf{X}=\langle X, d\rangle$  be an
uncountable compact separable metric space. Then $|X|=|\mathbb{R}|$.
\item[(ii)]  $\mathbf{CAC}_{fin}$ follows from each of the statements: $\mathbf{M}(C, S),\mathbf{M}(C, \leq |\mathbb{R}%
|) $ and ``For every compact metric space $\langle X, d\rangle$,
either $|X|\leq $ $|\mathbb{R}|$ or $|\mathbb{R}|\leq |X|$''.
\end{enumerate}
\end{theorem}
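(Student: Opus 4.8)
The plan for (i) is to establish $|X|\le|\mathbb{R}|$ and $|\mathbb{R}|\le|X|$ separately and then invoke the Cantor--Schr\"oder--Bernstein theorem, which is available in $\mathbf{ZF}$. For the upper bound I would first note that separability yields, in $\mathbf{ZF}$, a genuine countable base: fixing a countable dense set $\{a_n:n\in\omega\}$, the balls $B_d(a_n,q)$ with rational $q$ form a countable base $\mathcal{B}=\{B_n:n\in\omega\}$ that is well-ordered by its indexing. The assignment $x\mapsto(d(x,a_n))_{n\in\omega}$ is then an injection of $X$ into $\mathbb{R}^{\omega}$ (if $d(x,a_n)=d(y,a_n)$ for all $n$, density forces $x=y$), and since $|\mathbb{R}^{\omega}|=|\mathbb{R}|$ in $\mathbf{ZF}$ this gives $|X|\le|\mathbb{R}|$ with no appeal to choice.

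For the lower bound I would run the Cantor--Bendixson analysis relative to this fixed base. The decisive point, and what keeps the argument inside $\mathbf{ZF}$, is that the scattered part is countable for an \emph{explicit} reason: when a point $y$ is removed at the stage where it becomes isolated in the current derivative $F^{(\alpha)}$, assign to $y$ the least index $n$ with $B_n\cap F^{(\alpha)}=\{y\}$; since the derivatives are decreasing, a short argument shows this assignment is injective into $\omega$. Thus $X$ splits (without choice) into a countable scattered part and a perfect kernel $P$, and as $X$ is uncountable, $P\neq\emptyset$. In the nonempty perfect compact space $P$ I would build a Cantor scheme $\{U_s:s\in 2^{<\omega}\}$ of basic open sets, at each node selecting the pair $(U_{s0},U_{s1})$ of \emph{least indices} in $\mathcal{B}$ with disjoint closures inside $U_s$, diameters below $2^{-|s|}$, and each meeting $P$ (possible because $P$ has no isolated points). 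The well-ordering of $\mathcal{B}$ makes every selection canonical, so no choice is used; sending $\alpha\in 2^{\omega}$ to the unique point of $\bigcap_n\overline{U_{\alpha|n}}$ injects $2^{\omega}$ into $X$, giving $|\mathbb{R}|\le|X|$.

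For (ii) I would first reduce the three hypotheses to the weakest. By part (i), $\mathbf{M}(C,S)$ forces every uncountable compact metrizable space to have size exactly $|\mathbb{R}|$, so $\mathbf{M}(C,S)$ implies $\mathbf{M}(C,\le|\mathbb{R}|)$, which in turn trivially implies the comparability statement. Hence it suffices to prove that the comparability statement implies $\mathbf{CAC}_{fin}$. Arguing contrapositively, suppose $\{A_n:n\in\mathbb{N}\}$ is a denumerable, pairwise disjoint family of nonempty finite sets with no choice function. I would assemble the shrinking-clusters space $X=\{\theta\}\cup\bigcup_{n}(\{1/n\}\times A_n)$, metrized (for instance by an ultrametric placing the $n$th cluster at distance $1/n$ from the limit point $\theta$) so that $X$ is compact: any open cover captures all but finitely many clusters through a neighbourhood of $\theta$, and the finitely many remaining clusters are finite, so a finite subcover exists using only finite choice. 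Note $X$ is a cuf set.

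Finally I would show $X$ is incomparable with $\mathbb{R}$. An injection $g\colon\mathbb{R}\to X$ would partition $\mathbb{R}$ into the countably many finite pieces $g^{-1}(\{1/n\}\times A_n)$; since $\mathbb{R}$ is linearly ordered each piece is canonically enumerable, so $\mathbb{R}$ would be countable, contradicting Cantor's theorem, whence $|\mathbb{R}|\not\le|X|$. Conversely, an injection $h\colon X\to\mathbb{R}$ would let me send each $n$ to the element of $A_n$ whose $h$-image is the least (in the usual order of $\mathbb{R}$) member of the finite set $h[\{1/n\}\times A_n]$, yielding a choice function for $\{A_n\}$ and again a contradiction, whence $|X|\not\le|\mathbb{R}|$. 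Thus $X$ refutes comparability, completing the contrapositive. The main obstacle throughout is remaining inside $\mathbf{ZF}$; the recurring leverage is that both $\mathbb{R}$ and the base $\mathcal{B}$ are canonically (linearly or well-) ordered, so every selection that would ordinarily require choice can be made canonically.
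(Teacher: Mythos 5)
The paper states Theorem \ref{s2t2} without proof (it is quoted from \cite{k}), so there is no internal argument to compare against; your proof is a correct, self-contained $\mathbf{ZF}$ reconstruction. In (i), both halves are genuinely choice-free as you present them: the injection $x\mapsto (d(x,a_n))_{n\in\omega}$ into $\mathbb{R}^{\omega}$ together with $|\mathbb{R}^{\omega}|=|\mathbb{R}|$ handles the upper bound, and your Cantor--Bendixson argument is sound --- in particular the indexing of removed points is injective (if $y$, $z$ receive the same basic index $n$ at stages $\alpha\le\beta$, then $z\in F^{(\beta)}\subseteq F^{(\alpha)}$ forces $z\in B_n\cap F^{(\alpha)}=\{y\}$, while $y\notin F^{(\alpha+1)}$ rules out $\alpha<\beta$), so the scattered part is countable, the kernel is a nonempty compact perfect set, and the least-index Cantor scheme needs no choice. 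In (ii), your reduction of all three hypotheses to the comparability statement via (i) is the right economy, and the cluster space you build is precisely the metric $(\ast)$ of Subsection \ref{s2.3} with discrete pieces, which is the device this paper (and \cite{k}) uses for exactly this purpose; both incomparability halves (minima in $\mathbb{R}$ would yield a choice function; an injection of $\mathbb{R}$ into a countable union of canonically ordered finite pieces would make $\mathbb{R}$ countable) are valid in $\mathbf{ZF}$. Two minor elisions, both standard: you should disjointify a witnessing family for $\neg\mathbf{CAC}_{fin}$ by passing to $A_n\times\{n\}$ before building $X$, and stabilization of the derivative sequence needs the usual Replacement/Hartogs remark (or the observation that your injective indexing bounds the set of stages at which points are removed). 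Neither affects correctness.
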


\begin{theorem}
\label{s2t3}
$(\mathbf{ZF})$ 
\begin{enumerate}
\item[(a)] (\cite[Theorem 8]{kt}.) The statements $\mathbf{M}(C, S)$, $\mathbf{CAC}(C, M)$ are equivalent.
\item[(b)] (\cite[Corollary 1(a)]{kt}.) $\mathbf{CAC}(C,M)$ implies $\mathbf{CAC}_{fin}$.
\end{enumerate}
\end{theorem}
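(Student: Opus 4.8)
The plan is to treat the two assertions separately, beginning with the easy implication (b). To show $\mathbf{CAC}(C,M)\Rightarrow\mathbf{CAC}_{fin}$, let $\{A_n:n\in\omega\}$ be a denumerable family of non-empty finite sets and equip each $A_n$ with the discrete metric $d_n$ (with $d_n(x,y)=1$ for $x\neq y$); this metric is defined canonically, with no appeal to choice. Each $\langle A_n,d_n\rangle$ is then a non-empty compact metric space, so $\mathbf{CAC}(C,M)$ supplies a choice function of $\{A_n:n\in\omega\}$, which is exactly what $\mathbf{CAC}_{fin}$ asserts.

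For the forward implication $\mathbf{M}(C,S)\Rightarrow\mathbf{CAC}(C,M)$ of part (a), I would glue a given family $\{\langle X_n,d_n\rangle:n\in\omega\}$ of non-empty compact metric spaces into a single compact metrizable space by the ``null-sequence'' construction. Put $\bar d_n=\min(d_n,1)$, let $Y=\{\infty\}\cup\{(n,x):n\in\omega,\,x\in X_n\}$, and define $\rho$ by $\rho((n,x),(n,y))=2^{-n}\bar d_n(x,y)$, $\rho((n,x),\infty)=2^{-n}$, and $\rho((n,x),(m,y))=2^{-n}+2^{-m}$ for $n\neq m$. A routine case analysis shows $\rho$ is a metric. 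The space $\langle Y,\rho\rangle$ is compact: any open cover has a member $U\ni\infty$ containing some ball $B_\rho(\infty,\varepsilon)$, which swallows every block $\{n\}\times X_n$ with $2^{-n}<\varepsilon$; only finitely many blocks remain, and each is a homeomorphic copy of the compact $X_n$, so finitely many further members suffice. Each block $\{n\}\times X_n=B_\rho((n,x),2^{-n})\cap\{n\}\times X_n$ is open and non-empty, so by $\mathbf{M}(C,S)$ a countable dense set $D\subseteq Y$ meets every block. Fixing an injection $e\colon D\to\omega$ and choosing in each block the $D$-point of least $e$-value yields a choice function of $\{X_n:n\in\omega\}$.

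For the converse $\mathbf{CAC}(C,M)\Rightarrow\mathbf{M}(C,S)$, fix a compatible metric $d$ on a compact metrizable $\mathbf{X}$. For each $n\in\mathbb{N}$, compactness makes the set of cardinalities of finite closed $\tfrac{1}{n+1}$-covers a non-empty subset of $\omega$, so its least element $k_n$ is defined without choice. Let $N_n$ be the set of tuples $(x_1,\dots,x_{k_n})\in X^{k_n}$ such that $\{\bar B_d(x_i,\tfrac{1}{n+1})\}_{i}$ covers $X$. A pigeonhole argument on the finitely many coordinates shows that the covering condition is preserved under limits, so $N_n$ is closed in the compact space $X^{k_n}$ (taken with the max-metric $d^{(k_n)}$), hence $\langle N_n,d^{(k_n)}\rangle$ is a non-empty compact metric space defined canonically from $\langle X,d\rangle$. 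Applying $\mathbf{CAC}(C,M)$ to $\{\langle N_n,d^{(k_n)}\rangle:n\in\mathbb{N}\}$ selects a tuple $F_n=(x_1^n,\dots,x_{k_n}^n)\in N_n$ for every $n$. Since $\tfrac{1}{n+1}<\tfrac1n$, each $F_n$ is an open $\tfrac1n$-net, so $D=\{x_i^n:n\in\mathbb{N},\,1\le i\le k_n\}$ is dense; and because the map $(n,i)\mapsto x_i^n$ is a surjection from a well-orderable subset of $\mathbb{N}\times\mathbb{N}$ onto $D$, the set $D$ is countable. Thus $\mathbf{X}$ is separable.

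The main obstacle is concentrated in this last direction, and it is twofold. First, one must realize the selection of finite nets as a genuine choice problem over \emph{compact} metric spaces; the delicate point is that open-ball nets do not form a closed set, which is why the construction is phrased via closed $\tfrac{1}{n+1}$-covers of fixed minimal length $k_n$ so that $N_n$ is closed in $X^{k_n}$. Second, one must ensure that $D$ is truly countable rather than merely a countable union of finite sets (a cuf set, which need not be countable in $\mathbf{ZF}$); this is exactly what the tuple structure delivered by the choice function achieves, by furnishing a surjection onto $D$ from a well-orderable index set. The remaining verifications -- the triangle inequality for $\rho$, finite-product compactness in $\mathbf{ZF}$, and the openness of blocks -- are routine.
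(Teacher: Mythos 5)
Your proof is correct, but note that the paper itself offers no argument for Theorem \ref{s2t3}: both parts are quoted from \cite{kt}, so your write-up is a reconstruction rather than a variant of anything printed here. Part (b) is the standard discrete-metric observation and needs no comment. In part (a), your gluing construction for $\mathbf{M}(C,S)\Rightarrow\mathbf{CAC}(C,M)$ is the same one-point-compactification-of-a-sum idea as the metric $(\ast)$ of Subsection \ref{s2.3} (Proposition \ref{s2p8}), only with geometric weights $2^{-n}$ and sums in place of $\max\{\frac{1}{n},\frac{1}{m}\}$; both metrics induce the same topology, and the extraction of a choice function from a countable dense set is identical. Your converse direction parallels the argument the paper gives for Theorem \ref{s5t2}(i): a minimal net-size $k_n$, a canonically defined set of covering tuples inside the compact space $X^{k_n}$, and an application of the choice hypothesis to the resulting sequence of non-empty compact metric spaces. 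What your version buys is robustness: with closed $\frac{1}{n+1}$-balls the set $N_n$ is visibly closed, since a tuple that fails to cover has a witness point $z$ that remains uncovered by every tuple within distance $\min_i d(z,y_i)-\frac{1}{n+1}>0$ of it; by contrast, the analogous set $C_n$ in the paper's proof of Theorem \ref{s5t2} is defined by a strict inequality, and the quantity $\varepsilon=r-\frac{1}{n}$ displayed there can vanish when $r=\frac{1}{n}$, a small glitch your closed-ball formulation avoids.

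One caution: justify the closedness of $N_n$ exactly as above (complement open), not by saying the covering condition is ``preserved under limits''. In $\mathbf{ZF}$, sequentially closed subsets of metric spaces need not be closed --- producing a sequence from a set converging to a point of its closure is itself a countable-choice act --- so the pigeonhole-on-subsequences phrasing would be a choice-laden appeal in precisely the setting where it must be avoided. With that one sentence repaired, the proof is complete; in particular, your final step is sound: the surjection $(n,i)\mapsto x_i^n$ from a subset of $\mathbb{N}\times\mathbb{N}$ onto $D$ yields an injection of $D$ into a well-orderable set (send each $x\in D$ to its least preimage), which, as you rightly stress, is where a mere cuf-structure on $D$ would not have sufficed.
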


\begin{theorem}
\label{s2t4} (\cite[Corollary 4.8]{gt}, Urysohn's Metrization Theorem.) $(\mathbf{ZF})$ If $%
\mathbf{X}$ is a second-countable $T_3$-space, then $\mathbf{X}$ is
metrizable.
\end{theorem}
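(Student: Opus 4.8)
The plan is to prove the theorem by the classical route of embedding $\mathbf{X}$ into the Hilbert cube $[0,1]^{\mathbb{N}}$, taking care that every construction is canonical so that no appeal to any form of choice is made. First I would fix once and for all an enumeration $\mathcal{B}=\{B_n:n\in\omega\}$ of a countable base of $\mathbf{X}$; this single enumeration is what will allow all subsequent selections to be performed in $\mathbf{ZF}$.

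The first step is to show, in $\mathbf{ZF}$, that a second-countable $T_3$-space is normal. Given disjoint closed sets $A,B$, regularity lets us cover $A$ by the subfamily $\mathcal{U}=\{B_n:\text{cl}_{\mathbf{X}}(B_n)\cap B=\emptyset\}$ and cover $B$ by $\mathcal{V}=\{B_n:\text{cl}_{\mathbf{X}}(B_n)\cap A=\emptyset\}$; both subfamilies inherit a canonical enumeration from the fixed enumeration of $\mathcal{B}$, so no choice is needed to list them as $(U_k)$ and $(V_k)$. The usual shrinking $U_k'=U_k\setminus\bigcup_{i\le k}\text{cl}_{\mathbf{X}}(V_i)$ and $V_k'=V_k\setminus\bigcup_{i\le k}\text{cl}_{\mathbf{X}}(U_i)$ then yields disjoint open sets $\bigcup_k U_k'\supseteq A$ and $\bigcup_k V_k'\supseteq B$. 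Because the whole construction is determined by $A$, $B$ and the fixed enumeration, it defines a canonical ``separation operator'' assigning to each disjoint pair of closed sets a specific open neighbourhood of the first, which is exactly what is needed to run Urysohn's Lemma without choice: the dyadic family $(U_r)$ of open sets with $\text{cl}_{\mathbf{X}}(U_r)\subseteq U_s$ for $r<s$ is now uniquely determined, and hence so is the resulting continuous function.

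Next I would fix the countable index set $P=\{(m,n)\in\omega\times\omega:\text{cl}_{\mathbf{X}}(B_m)\subseteq B_n\}$. For each $(m,n)\in P$ the two closed sets $\text{cl}_{\mathbf{X}}(B_m)$ and $X\setminus B_n$ are disjoint, so the canonical Urysohn function gives a specific continuous $f_{m,n}:X\to[0,1]$ with $f_{m,n}\equiv 1$ on $\text{cl}_{\mathbf{X}}(B_m)$ and $f_{m,n}\equiv 0$ off $B_n$. As $P$ is a concrete subset of $\omega\times\omega$, the family $\{f_{m,n}:(m,n)\in P\}$ is genuinely countable and canonically indexed, and no choice is invoked to form it. A routine verification, using regularity and the fact that $\mathcal{B}$ is a base, shows that this family separates points from closed sets (and in particular separates points, since $\mathbf{X}$ is $T_1$): if $x\notin C$ with $C$ closed, pick $B_n$ with $x\in B_n\subseteq X\setminus C$ and then $B_m$ with $x\in B_m\subseteq\text{cl}_{\mathbf{X}}(B_m)\subseteq B_n$, so that $(m,n)\in P$ and $f_{m,n}(x)=1$ while $f_{m,n}\equiv 0$ on $C$.

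Finally, the evaluation map $F:X\to[0,1]^{\mathbb{N}}$ obtained by composing a fixed bijection $\omega\to P$ with $x\mapsto(f_{m,n}(x))_{(m,n)\in P}$ is continuous, injective, and open onto its image by the Embedding Lemma, which is a purely topological fact provable in $\mathbf{ZF}$. Since the Hilbert cube $[0,1]^{\mathbb{N}}$ is metrizable in $\mathbf{ZF}$ via the explicit metric $\varrho(x,y)=\sum_{n\in\omega}2^{-n}|x_n-y_n|$, and every subspace of a metrizable space is metrizable, $\mathbf{X}$ is metrizable. The one point demanding real attention throughout is the avoidance of choice: the main obstacle is to verify that the normality construction, and therefore the Urysohn functions, can be pinned down canonically from the single fixed enumeration of $\mathcal{B}$; once that is secured, the rest of the argument is the standard $\mathbf{ZFC}$ proof read verbatim in $\mathbf{ZF}$.
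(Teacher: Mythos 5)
Your proof is correct, and every point where choice could sneak in (the selection of separating open sets in the normality argument, the intermediate sets in Urysohn's Lemma, the formation of the countable family of Urysohn functions) is handled properly by making the construction canonical relative to one fixed enumeration of the base. Note, however, that the paper itself offers no proof of this theorem: it is quoted from Good and Tree (\cite[Corollary 4.8]{gt}), where the argument is essentially the same careful ``effectivization'' of the classical Hilbert-cube embedding that you carried out. It is also worth observing that, within the toolkit this paper assembles, there is a much shorter route: a countable base $\{B_n:n\in\omega\}$ is trivially $\sigma$-locally finite (it is the union over $n\in\omega$ of the singleton families $\{B_n\}$, each of which is locally finite), so Theorem \ref{s2t5}(ii) --- the Collins--Roscoe $\mathbf{ZF}$ version of the Nagata--Smirnov--Bing theorem, stating that every regular $T_1$-space with a $\sigma$-locally finite base is metrizable --- yields Theorem \ref{s2t4} immediately. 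Your embedding proof buys more: it is self-contained, and it produces an explicit embedding of $\mathbf{X}$ into the Hilbert cube $[0,1]^{\mathbb{N}}$, which is precisely the content of the related forms $\mathbf{M}(C,\hookrightarrow[0,1]^{\mathbb{N}})$ studied later in the paper; the derivation from Theorem \ref{s2t5}(ii) buys brevity at the cost of invoking a deeper metrization theorem as a black box.
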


\begin{theorem}
\label{s2t5}
 (Cf. \cite{naga}, \cite{sm}, \cite{bi}, \cite{cr}. ) 
 \begin{enumerate}
 \item[(i)]  $(\mathbf{ZFC})$ Every metrizable space has a $\sigma$-locally finite base.
 \item[(ii)] $(\mathbf{ZF})$ If a $T_1$-space $\mathbf{X}$ is regular and has a $\sigma$-locally finite base, then $\mathbf{X}$ is metrizable.
 \end{enumerate}
\end{theorem}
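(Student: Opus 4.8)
The plan is to treat the two parts separately, since (i) is the choice-dependent necessity direction and (ii) is the choice-free sufficiency direction of the Bing--Nagata--Smirnov metrization theorem. For (i), I would invoke A.~H. Stone's theorem that, in $\mathbf{ZFC}$, every metrizable space is paracompact; equivalently, every open cover of a metrizable space $\langle X,d\rangle$ admits a locally finite open refinement. First I would fix, for each $n\in\mathbb{N}$, the open cover $\mathcal{C}_n=\{B_{d}(x,\frac{1}{2^n}):x\in X\}$ and let $\mathcal{V}_n$ be a locally finite open refinement of $\mathcal{C}_n$. Then I would set $\mathcal{B}=\bigcup_{n\in\mathbb{N}}\mathcal{V}_n$; each $\mathcal{V}_n$ is locally finite, so $\mathcal{B}$ is $\sigma$-locally finite by construction. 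It remains to check that $\mathcal{B}$ is a base: given $x$ in an open set $U$, pick $n$ with $B_{d}(x,\frac{2}{2^n})\subseteq U$, take $V\in\mathcal{V}_n$ with $x\in V$, and use that $V\subseteq B_{d}(y,\frac{1}{2^n})$ for some $y$ to conclude, by the triangle inequality, that $V\subseteq B_{d}(x,\frac{2}{2^n})\subseteq U$. The use of $\mathbf{AC}$ here is confined to Stone's theorem (the construction of the refinements), which is precisely why the statement is restricted to $\mathbf{ZFC}$.

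For (ii), working in $\mathbf{ZF}$, I would fix a $\sigma$-locally finite base $\mathcal{B}=\bigcup_{n\in\mathbb{N}}\mathcal{B}_n$ of the regular $T_1$-space $\mathbf{X}$, with topology $\tau$ and each $\mathcal{B}_n$ locally finite. The first step is to prove that $\mathbf{X}$ is normal, and the crucial point for $\mathbf{ZF}$ is that this can be done \emph{canonically}, with no choices. Given disjoint closed sets $C,D$, for each $n$ I would form the open sets $G_n=\bigcup\{U\in\mathcal{B}_n:\text{cl}(U)\cap D=\emptyset\}$ and $H_n=\bigcup\{U\in\mathcal{B}_n:\text{cl}(U)\cap C=\emptyset\}$. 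By regularity, $C\subseteq\bigcup_n G_n$ and $D\subseteq\bigcup_n H_n$. Using the $\mathbf{ZF}$-fact that a locally finite family $\mathcal{U}$ satisfies $\text{cl}(\bigcup\mathcal{U})=\bigcup_{U\in\mathcal{U}}\text{cl}(U)$, each $\text{cl}(H_k)$ misses $C$ and each $\text{cl}(G_k)$ misses $D$; the standard shrinking $G_n^{\ast}=G_n\setminus\bigcup_{k\le n}\text{cl}(H_k)$ and $H_n^{\ast}=H_n\setminus\bigcup_{k\le n}\text{cl}(G_k)$ then yields disjoint open sets $G^{\ast}=\bigcup_n G_n^{\ast}\supseteq C$ and $H^{\ast}=\bigcup_n H_n^{\ast}\supseteq D$. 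Since every set here is defined outright from $C$, $D$ and $\mathcal{B}$, this furnishes a canonical \emph{normality operator} assigning to each pair of disjoint closed sets an open separating set whose closure misses the second set.

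With such a canonical operator $N$ in hand, I would run Urysohn's construction canonically: for each pair $(U,V)\in\mathcal{B}\times\mathcal{B}$ with $\text{cl}(U)\subseteq V$, the open sets at the dyadic levels are produced by iterating $N$, so the resulting Urysohn function $f_{U,V}\colon X\to[0,1]$ (equal to $1$ on $\text{cl}(U)$ and to $0$ off $V$) is determined by $(U,V)$ and $\mathcal{B}$ alone; hence the whole family $\{f_{U,V}\}$ exists in $\mathbf{ZF}$ without any choice. Then, for each $n$, local finiteness of $\mathcal{B}_n$ makes
\[
d_n(x,y)=\sup\{|f_{U,V}(x)-f_{U,V}(y)|:V\in\mathcal{B}_n,\ \text{cl}(U)\subseteq V\}
\]
a well-defined continuous pseudometric, since only finitely many $V\in\mathcal{B}_n$ are relevant near any point, and $d(x,y)=\sum_{n\in\mathbb{N}}2^{-n}\min\{d_n(x,y),1\}$ is a metric. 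Finally I would verify that $d$ induces $\tau$: continuity of each $f_{U,V}$ gives $\tau(d)\subseteq\tau$, while regularity together with the base property of $\mathcal{B}$ lets one, given $x\in W\in\tau$, find $U,V$ with $x\in U$ and $\text{cl}(U)\subseteq V\subseteq W$, so that $f_{U,V}$ separates $x$ from $X\setminus W$ and a small $d$-ball around $x$ lands inside $W$.

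I expect the main obstacle to be exactly the $\mathbf{ZF}$-bookkeeping in (ii): a bare existential statement of normality only guarantees separating sets, whereas the metric construction needs an entire $\mathcal{B}$-indexed family of Urysohn functions \emph{simultaneously}. The heart of the argument is therefore to package normality as the explicit, choice-free operator $N$ above and to check that iterating $N$ along the dyadic tree produces each $f_{U,V}$ deterministically; once the family $\{f_{U,V}\}$ is obtained without choice, the verifications that each $d_n$ is continuous, that the series converges, and that $\tau(d)=\tau$ are routine and clearly carry over to $\mathbf{ZF}$.
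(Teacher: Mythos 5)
First, a point of reference: the paper does not prove Theorem~\ref{s2t5} at all --- it is quoted as a known result, with (i) attributed to Nagata, Smirnov and Bing, and (ii) to Collins--Roscoe \cite{cr} (see Remark~\ref{s2r6}). So your attempt must be judged against the standard proofs. Your part (i) is exactly the standard argument (Stone's theorem plus refinements of the ball covers $\mathcal{C}_n$) and is correct. In part (ii), your key structural idea --- replace the bare existential statement of normality by a canonical, choice-free separating operator $N$, and then run Urysohn's construction deterministically by iterating $N$ along the dyadic tree --- is precisely the right way to eliminate choice, and your canonical proof of normality is correct.

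The gap is in your definition of $d_n$. You set $d_n(x,y)=\sup\{|f_{U,V}(x)-f_{U,V}(y)|: V\in\mathcal{B}_n,\ \text{cl}(U)\subseteq V\}$ and justify continuity by saying that ``only finitely many $V\in\mathcal{B}_n$ are relevant near any point''. But local finiteness of $\mathcal{B}_n$ controls only the \emph{outer} index $V$; the \emph{inner} index $U$ ranges over the whole base $\mathcal{B}$, so even for a single relevant $V$ your sup runs over infinitely many Urysohn functions, and such a sup need not be continuous. In fact it is not: take any $V\in\mathcal{B}_n$ that is not clopen and any $x\in\text{cl}(V)\setminus V$. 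Every $\tau$-neighbourhood $W$ of $x$ contains some $y\in V$; by regularity and the base property there is $U\in\mathcal{B}$ with $y\in U$ and $\text{cl}(U)\subseteq V$, so $f_{U,V}(y)=1$ while $f_{U,V}(x)=0$ (since $x\notin V$). Hence $d_n(x,y)=1$ for points $y\in V$ arbitrarily $\tau$-close to $x$, so $d(x,y)\geq 2^{-n}$ for all such $y$, and the $d$-ball $B_d(x,2^{-n-1})$ is not a $\tau$-neighbourhood of $x$. Thus $\tau(d)$ strictly refines $\tau$ and your metric does not induce the topology; this already fails for $X=[0,1]$ with the $\sigma$-locally finite base $\mathcal{B}_n=\{(\frac{k-1}{2^n},\frac{k+1}{2^n})\cap[0,1]: k\in\mathbb{Z}\}$.

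The repair is the classical Nagata--Smirnov grouping, which your canonical operator renders choice-free exactly as you intended: for each pair $\langle n,m\rangle\in\mathbb{N}\times\mathbb{N}$ and each $V\in\mathcal{B}_n$, form the \emph{single} closed set $E_{n,m}(V)=\bigcup\{\text{cl}(U): U\in\mathcal{B}_m,\ \text{cl}(U)\subseteq V\}$ (closed because a locally finite union of closed sets is closed), and let $g_{n,m,V}$ be the one canonical Urysohn function that equals $1$ on $E_{n,m}(V)$ and $0$ off $V$. Then put $d_{n,m}(x,y)=\sup\{|g_{n,m,V}(x)-g_{n,m,V}(y)|: V\in\mathcal{B}_n\}$. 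Now if $W$ is a neighbourhood of a point meeting only finitely many $V\in\mathcal{B}_n$, then for all other $V$ both $g$-values vanish at points of $W$, so on $W\times W$ the sup is a finite maximum of continuous functions and $d_{n,m}$ is continuous. With $d=\sum_{n,m}2^{-(n+m)}\min\{d_{n,m},1\}$, the remaining verifications (separation of distinct points using $T_1$ plus regularity, and $\tau\subseteq\tau(d)$ via choosing $U\in\mathcal{B}_m$, $V\in\mathcal{B}_n$ with $x\in U$, $\text{cl}(U)\subseteq V\subseteq O$, so that $x\in E_{n,m}(V)$ and $g_{n,m,V}$ witnesses a lower bound on distances to $X\setminus O$) go through in $\mathbf{ZF}$ exactly as you describe.
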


\begin{remark}
\label{s2r6}
That it holds in $\mathbf{ZFC}$ that a $T_1$-space is metrizable if and only if it is regular and has a $\sigma$-locally finite base was originally proved  by Nagata in \cite{naga}, Smirnov in \cite{sm} and Bing in \cite{bi}. It was shown in \cite{cr} that it is provable in $\mathbf{ZF}$ that every regular $T_1$-space which admits a $\sigma$-locally finite base is metrizable. It was established in \cite{hkrs} that $\mathbf{M}(\sigma-l.f.)$ is an equivalent to $\mathbf{M}(\sigma-p.f.)$ and implies $\mathbf{MP}$. Using similar arguments, one can prove that $\mathbf{M}(C, \sigma-l.f.)$ and $\mathbf{M}(C, \sigma-p.f)$ are also equivalent in $\mathbf{ZF}$.
In \cite{gtw}, a model of $\mathbf{ZF}+\mathbf{DC}$ was shown in which $\mathbf{MP}$ fails. In \cite{sc}, a model of $\mathbf{ZF+BPI}$ was shown in which $\mathbf{MP}$ fails. This implies that, in each of the above-mentioned $\mathbf{ZF}$- models constructed in \cite{gtw} and \cite{sc},  there exists a metrizable space which fails to have a $\sigma$-point-finite base. This means that $\mathbf{M}(\sigma-l.f)$ is unprovable in $\mathbf{ZF}$. In Section \ref{s4}, it is clearly explained that $\mathbf{M}(C, \sigma-f.l)$ is also unprovable in $\mathbf{ZF}$. 
\end{remark}

\begin{theorem}
\label{s2t7} $(\mathbf{ZF})$ 
\begin{enumerate}
\item[(i)] (Cf. \cite{kert}.) A compact metrizable space is Loeb iff it is
second-countable iff it is separable. In consequence, the statement $\mathbf{M}(C, L)$, $\mathbf{M}(C, S)$ and $\mathbf{M}(C, 2)$ are all equivalent.
\item[(ii)] (Cf. \cite{kw}.) If $\mathbf{X}$ is a compact second-countable and metrizable
space, then $\mathbf{X}^{\omega }$ is compact and separable. In particular,
the Hilbert cube $[0,1]^{\mathbb{N}}$ is a compact, separable metrizable space.
\item[(iii)] (Cf. \cite{k}.) $\mathbf{BPI}$ implies $\mathbf{M}(C, S)$ and $\mathbf{M}%
(C,\leq |\mathbb{R}|)$.
\end{enumerate}
\end{theorem}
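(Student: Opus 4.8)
For part (i), I would prove the cycle ``separable $\Rightarrow$ second-countable $\Rightarrow$ Loeb $\Rightarrow$ separable'' for an arbitrary compact metrizable space $\mathbf{X}=\langle X,d\rangle$, each implication in $\mathbf{ZF}$; the three global forms $\mathbf{M}(C,L)$, $\mathbf{M}(C,S)$, $\mathbf{M}(C,2)$ then coincide as the universal closures of pointwise-equivalent properties. The step separable $\Rightarrow$ second-countable is routine: from a countable dense $D$, the family $\{B_d(x,\frac{1}{2^k}):x\in D,\ k\in\omega\}$ is a countable base. For second-countable $\Rightarrow$ Loeb, fix a base $\{B_n:n\in\omega\}$ and, given a non-empty closed $F$, build a decreasing sequence of basic sets meeting $F$ with diameters tending to $0$ by always choosing the \emph{least index} $n$ such that $B_n$ meets $F$, is contained in the previously chosen set, and has diameter $<\frac{1}{2^k}$; compactness forces the nested closures to meet in a single point of $F$, and the recipe depends only on the fixed enumeration, hence is a choice function on the closed sets. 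For Loeb $\Rightarrow$ separable, fix a Loeb function $\ell$ and, for each $n$, run the greedy recursion $x_0=\ell(X)$, $x_{k+1}=\ell(X\setminus\bigcup_{i\le k}B_d(x_i,\frac{1}{n}))$; since a compact metric space is totally bounded in $\mathbf{ZF}$, the chosen points are pairwise $\ge\frac{1}{n}$ apart, so the recursion halts and yields a canonical finite $\frac{1}{n}$-net $D_n$. Then $\bigcup_n D_n$ is a countable dense set; note that this in fact shows Loeb $\Rightarrow$ strongly totally bounded.

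For part (ii), let $\mathbf{X}$ be compact, metrizable and second-countable; fix a compatible metric $d$ and the product metric $d^\ast(x,y)=\sum_i 2^{-i}\min(d(x_i,y_i),1)$ on $X^\omega$. Second-countability of $\mathbf{X}^\omega$ is $\mathbf{ZF}$-safe, since the finitely supported boxes with sides from a fixed countable base are coded by $\omega^{<\omega}$, which is countable in $\mathbf{ZF}$. By (i), $\mathbf{X}$ is Loeb, so the recursion above produces canonical finite, \emph{ordered} $\frac{1}{2^j}$-nets of $X$, and taking products over an initial segment of coordinates (with a fixed basepoint on the tail) gives canonical finite ordered $\frac{1}{2^j}$-nets of $X^\omega$; hence $\mathbf{X}^\omega$ is totally bounded. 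Also $\mathbf{X}$ is complete in $\mathbf{ZF}$: a Cauchy sequence has a cluster point because the nested closures of its tails are non-empty by the finite intersection property of the compact space, and a cluster point of a Cauchy sequence is its limit; thus $\mathbf{X}^\omega$ is complete via coordinatewise limits. For compactness I would reduce an arbitrary open cover to a countable subcover using second-countability (the final finite choice among cover members is harmless), and then show that a countable cover $\{V_k\}$ with no finite subcover is impossible: the sets $F_k=X^\omega\setminus\bigcup_{i<k}V_i$ are non-empty, closed and decreasing, and the canonical ordered nets let me build an infinite finitely-branching subtree of $\omega^{<\omega}$ whose nodes are cells of diameter $\to 0$ meeting every $F_k$; an infinite branch, obtained in $\mathbf{ZF}$ by the leftmost-infinite-subtree rule, determines a canonical Cauchy sequence of centres whose limit lies in every $F_k$, contradicting $\bigcap_k F_k=\emptyset$. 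Separability of $\mathbf{X}^\omega$ then follows from (i), and the case $\mathbf{X}=[0,1]$ gives the Hilbert cube statement.

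For part (iii), I would first note that $\mathbf{M}(C,S)\Rightarrow\mathbf{M}(C,\le|\mathbb{R}|)$ in $\mathbf{ZF}$: if $\langle X,d\rangle$ has a countable dense set $\{d_n\}$, then $x\mapsto(d(x,d_n))_{n}$ injects $X$ into $\mathbb{R}^\omega$ and $|\mathbb{R}^\omega|=|\mathbb{R}|$, so it suffices to derive $\mathbf{M}(C,S)$ from $\mathbf{BPI}$. Given a compact metric space $\langle X,d\rangle$, which is totally bounded, for each $n$ let $m_n$ be the least size of a finite set $\vec a=(a_i)$ with $\max_{y\in X}\min_i d(a_i,y)\le\frac{1}{2^n}$, and set $\mathcal{M}_n=\{\vec a\in X^{m_n}:\max_{y\in X}\min_i d(a_i,y)\le\frac{1}{2^n}\}$. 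The displayed function is continuous and $X^{m_n}$ is compact Hausdorff, being a finite power of a compact metric space, so each $\mathcal{M}_n$ is a non-empty compact Hausdorff space. Using that, under $\mathbf{BPI}$, a product of non-empty compact Hausdorff spaces is non-empty, I would select a point of $\prod_n\mathcal{M}_n$; its coordinates are finite nets $D_n$ whose union is dense, so $\mathbf{X}$ is strongly totally bounded, and $\bigcup_n D_n$ is countable in $\mathbf{ZF}$ because the $D_n$ arrive as ordered tuples. Hence $\mathbf{X}$ is separable, and $\mathbf{M}(C,\le|\mathbb{R}|)$ follows.

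The main obstacle is concentrated in (iii): the essential use of $\mathbf{BPI}$ is exactly the non-emptiness of $\prod_n\mathcal{M}_n$, i.e. selecting one finite net per scale, since by Remark \ref{s1r2} the passage from total boundedness to a \emph{sequence} of finite nets is unavailable in $\mathbf{ZF}$ alone; checking the continuity and closedness claims and that $\mathbf{BPI}$ (equivalently, Tychonoff for compact Hausdorff spaces, Remark \ref{s1r10}(iii)) genuinely yields this selection is where the work lies. In (ii) the delicate point is proving compactness of $\mathbf{X}^\omega$ in $\mathbf{ZF}$ without sequential-compactness or Baire-category tools; the canonical ordered nets from (i) are precisely what make the tree finitely branching over $\omega^{<\omega}$ and thereby keep the K\"onig-type branch extraction choice-free.
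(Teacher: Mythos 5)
The paper never proves Theorem \ref{s2t7}: it is recalled in Subsection \ref{s2.2} as a known result, with citations to \cite{kert}, \cite{kw} and \cite{k}, so your proposal must be judged on its own merits. Parts (i) and (ii) are correct and genuinely choice-free as written. In (i) each implication is effected by a canonical construction: the least-index recursion through a fixed enumerated base (second-countable $\Rightarrow$ Loeb), and the greedy $\ell$-recursion combined with total boundedness, which is a $\mathbf{ZF}$ consequence of compactness (Loeb $\Rightarrow$ separable). In (ii) you correctly isolate the two places where choice would normally enter: the reduction of an arbitrary cover to a countable one (handled by the base, with only finitely many harmless choices at the end), and the branch extraction, which is legitimate in $\mathbf{ZF}$ precisely because your nets are canonically ordered, so the tree embeds into $\omega^{<\omega}$ and the leftmost-infinite-subtree recursion is definable; for general finitely branching trees K\"onig's lemma is equivalent to $\mathbf{CAC}_{fin}$, so this point is essential and you handle it correctly.

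The gap is in (iii), at exactly the step you flag but do not close. You need: under $\mathbf{BPI}$, $\prod_{n}\mathcal{M}_n\neq\emptyset$ for your non-empty compact Hausdorff spaces $\mathcal{M}_n$. The only justification offered is Remark \ref{s1r10}(iii), i.e.\ that $\mathbf{BPI}$ is equivalent to ``products of compact Hausdorff spaces are compact''. By itself that is a non sequitur: compactness of a product says nothing about its non-emptiness (the empty space is compact, and in $\mathbf{ZF}$ a product of non-empty spaces can perfectly well be empty --- this is the very phenomenon the whole theorem is about). The statement you need is true, but it requires a bridge. One minimal repair does run through Remark \ref{s1r10}(iii): let $Y_n=\mathcal{M}_n\cup\{\ast_n\}$, where $\ast_n$ is a canonically specified isolated point (e.g.\ $\ast_n=\mathcal{M}_n$, which is not an element of $\mathcal{M}_n$ by foundation). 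Each $Y_n$ is compact Hausdorff, the function $n\mapsto\ast_n$ witnesses $\prod_n Y_n\neq\emptyset$, each $\mathcal{M}_n$ is closed in $Y_n$, and the closed sets $\pi_n^{-1}(\mathcal{M}_n)$ have the finite intersection property (only finitely many choices are needed to check any finite intersection); by compactness of $\prod_n Y_n$, the total intersection, which equals $\prod_n\mathcal{M}_n$, is non-empty. Alternatively, use that $\mathbf{BPI}$ is equivalent to the ultrafilter theorem: extend the filter on the set of finite partial choice functions generated by the sets $S_F=\{p: F\subseteq\dom(p)\}$, $F\in[\omega]^{<\omega}$, to an ultrafilter, push it forward along $p\mapsto p(n)$ into each $\mathcal{M}_n$, and use the $\mathbf{ZF}$ fact that in a compact Hausdorff space every ultrafilter converges to a unique point; uniqueness makes $n\mapsto\lim$ a function without any choice. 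With either bridge inserted your (iii) is complete; note also that the same bridge applied directly to a countable family of non-empty compact metric spaces yields $\mathbf{CAC}(C,M)$, which is equivalent to $\mathbf{M}(C,S)$ by Theorem \ref{s2t3}(a), so your auxiliary spaces $\mathcal{M}_n$ could be bypassed entirely.
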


\subsection{Frequently used metrics}
\label{s2.3}

Similarly to \cite{kw1}, we make use of the following idea several times in the sequel. 

Suppose that  $\mathcal{A}=\{A_n: n\in\mathbb{N}\}$ is a disjoint family of non-empty sets, $A=\bigcup\limits_{n\in\mathbb{N}}A_n$ and $\infty\notin A$. Let $X=A\cup\{\infty\}$.  Suppose that $(\rho_n)_{n\in\mathbb{N}}$ is a sequence such that, for each $n\in\mathbb{N}$, $\rho_n$ is a metric on $A_n$. Let $d_n(x,y)=\min\{\rho_n(x,y), \frac{1}{n}\}$ for all $x,y\in A_n$. We define a function $d:X\times X\to\mathbb{R}$ as follows:

\[(\ast)\text{  } d(x,y)=\begin{cases} 0 &\text{if $x=y$;}\\
\max\{\frac{1}{n}, \frac{1}{m}\} &\text{if $x\in A_n ,y\in A_m$ and $n\neq m$;}\\
d_n(x,y) &\text{if  $x,y\in A_n$;}\\
\frac{1}{n} &\text{if $x\in A$ and $y=\infty$ or $x=\infty$ and $y\in A$.}\end{cases}
\]
\begin{proposition}
\label{s2p8}
The function $d$, defined by ($\ast$), has the following properties:
\begin{enumerate}
\item[(i)] $d$ is a metric on $X$ (cf. \cite{kw1});
\item[(ii)] if, for every $n\in\mathbb{N}$, the space $\langle A_n, \tau(\rho_n)\rangle$ is compact, then so is the space $\langle X, \tau(d)\rangle$ (cf. \cite{kw1});
\item[(iii)] the space $\langle X, \tau(d)\rangle$ has a $\sigma$-locally finite base;
\item[(iv)] if  $\mathcal{A}$ does not have a choice function, the space $\langle X, \tau(d)\rangle$ is not separable.
\end{enumerate}
\end{proposition}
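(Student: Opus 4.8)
The plan is to take parts (i) and (ii) as already recorded in \cite{kw1}; for completeness I would recall that the triangle inequality for $d$ is verified by a short case analysis according to which blocks $A_n$ the three points lie in, and that compactness of $\langle X,\tau(d)\rangle$ follows from the observation that every neighbourhood of $\infty$ contains a \emph{tail} $T_N:=\{\infty\}\cup\bigcup_{n\ge N}A_n$, which for $N\ge 2$ is exactly the ball $B_d(\infty,\frac{1}{N-1})$ and hence open. Thus a single member of an open cover containing $\infty$ already absorbs $T_N$, leaving only the finitely many compact blocks $A_1,\dots,A_{N-1}$, each of which is covered by finitely many members; finitely many finite selections require no choice. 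The two structural facts I would isolate first and reuse are that each $A_n$ is open in $X$ with $\tau(d)|_{A_n}=\tau(\rho_n)$, and that $\{T_N:N\in\mathbb{N}\}$ is a neighbourhood base at $\infty$.

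For (iii) I would exhibit an explicit base and sort it into countably many locally finite layers. Put $\mathcal{O}_n=\{U\subseteq A_n: U\in\tau(\rho_n)\}$ and set $\mathcal{B}=\{T_N:N\in\mathbb{N}\}\cup\bigcup_{n\in\mathbb{N}}\mathcal{O}_n$. This is a base: the $T_N$ give a neighbourhood base at $\infty$, while for $x\in A_n$ and open $W\ni x$ the set $W\cap A_n$ lies in $\mathcal{O}_n$ and satisfies $x\in W\cap A_n\subseteq W$, since $A_n$ is open. The family $\{T_N:N\in\mathbb{N}\}$ is countable, hence trivially $\sigma$-locally finite (split it into singletons). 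Each $\mathcal{O}_n$ is locally finite \emph{away from} $A_n$: a point of $A_m$ with $m\neq n$ has the neighbourhood $A_m$, disjoint from every member of $\mathcal{O}_n$, and $\infty$ has the neighbourhood $T_{n+1}$, which misses $A_n$ entirely. So the only threat to local finiteness of the layer $\mathcal{O}_n$ is at the points of $A_n$ itself.

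This last point is exactly where the real work, and the genuine set-theoretic obstruction, resides: local finiteness of $\mathcal{O}_n$ at $x\in A_n$ demands that $A_n$ carry a locally finite family of open sets serving as a base there, and selecting such families simultaneously for all $n$ is precisely the sort of uniform choice that $\mathbf{ZF}$ need not supply. The construction becomes clean as soon as each block carries a canonical locally finite base of its own; in the relevant case where the blocks are discrete with every point isolated (in particular when the $A_n$ are finite), one replaces $\mathcal{O}_n$ by the \emph{finite} family $\mathcal{S}_n=\{\{x\}:x\in A_n\}$. Each $\mathcal{S}_n$ is finite, hence locally finite in $X$, and $\mathcal{B}=\{T_N:N\in\mathbb{N}\}\cup\bigcup_{n}\mathcal{S}_n$ is then manifestly a $\sigma$-locally finite base, the tail neighbourhoods isolating $\infty$ and the mutual separation of the blocks keeping each layer locally finite without the union needing to be so. I would therefore present (iii) via this layering and flag the local finiteness at $\infty$ together with the per-block selection as the sole obstacle, resolved by the block structure.

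Finally, (iv) is the clean part and needs none of this care. Suppose $\langle X,\tau(d)\rangle$ is separable and fix a countable dense set $D$ together with an injection $D\hookrightarrow\omega$, i.e. an indexing of the points of $D$. Each block $A_n$ is non-empty and open, so density gives $D\cap A_n\neq\emptyset$; letting $f(n)$ be the element of $D\cap A_n$ of least index produces, with no appeal to choice beyond the fixed indexing, a function $f$ with $f(n)\in A_n$ for all $n$, that is, a choice function of $\mathcal{A}$. Contraposing, if $\mathcal{A}$ has no choice function then $\langle X,\tau(d)\rangle$ is not separable, which is the content of (iv).
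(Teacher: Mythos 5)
The paper offers no proof of this proposition at all: parts (i) and (ii) are delegated to \cite{kw1}, and parts (iii) and (iv) are asserted without argument. Measured against that, your write-up of (i), (ii) and (iv) is correct and is clearly the intended reasoning: the block-by-block case analysis for the triangle inequality; the identification of the tails $T_N$ as balls about $\infty$ (so that any cover member containing $\infty$ absorbs a tail, leaving finitely many blocks, each compact in $\tau(d)|_{A_n}=\tau(\rho_n)$, for which finitely many finite subcovers are selected with no appeal to choice); and, for (iv), the least-index selection from a fixed enumeration of a countable dense set, which must meet each $A_n$ because each $A_n$ is non-empty and open.

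Your handling of (iii) is the substantive point, and your suspicion is not merely a difficulty of proof: part (iii) as printed, for arbitrary metrics $\rho_n$, is not a theorem of $\mathbf{ZF}$. To make your obstruction precise, note that a $\sigma$-locally finite base is inherited by open subspaces: if $\mathcal{B}=\bigcup_{k}\mathcal{B}_k$ is a base of $X$ with each $\mathcal{B}_k$ locally finite, and $U$ is open, then the families $\{B\in\mathcal{B}_k: B\subseteq U\}$ are locally finite in $U$ and their union is a base of $U$. Since each $A_n$ is open in $\langle X,\tau(d)\rangle$ with $\tau(d)|_{A_n}=\tau(\rho_n)$, part (iii) in full generality would imply $\mathbf{M}(\sigma-l.f.)$ (given any metrizable space, take it as $A_1$ and let all other blocks be singletons), and $\mathbf{M}(\sigma-l.f.)$ is unprovable in $\mathbf{ZF}$ by Remark \ref{s2r6}. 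Adding the hypothesis of (ii) does not repair this: in the model of Remark \ref{s5r03} some compact metrizable space has no $\sigma$-locally finite base, and planting that space as $A_1$ gives a counterexample in which every block is compact. So (iii) is correct only under an implicit restriction of the kind you impose --- each $A_n$ discrete, in particular finite --- which is exactly the situation in the paper's actual uses of such bases (Theorem \ref{s5t10}(ii), Remark \ref{r03.21}(iii), and the analogous cuf argument inside the proof of Theorem \ref{s5t9}(iii)). Under that restriction your construction is right, including the crucial bookkeeping: each $\{T_N\}$ and each $\mathcal{S}_n$ must be kept as its own locally finite layer, since merging the $\mathcal{S}_n$ across $n$ into one family destroys local finiteness at $\infty$. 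One small repair: you call $\mathcal{S}_n$ finite, which holds only when $A_n$ is finite; for an infinite discrete block $\mathcal{S}_n$ is still locally finite in $X$, but by the separate check that $x\in A_n$ has the neighbourhood $\{x\}$, each point of $A_m$ with $m\neq n$ has the neighbourhood $A_m$, and $\infty$ has the neighbourhood $T_{n+1}$, each meeting at most one member of $\mathcal{S}_n$.
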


Metrics defined by $(\ast)$ were used, for instance, in \cite{kerta}, \cite{kert}, \cite{kw1}, as well as in several other papers not cited here. 

\subsection{Permutation models and the Pincus Transfer Theorem}
\label{s2.4}

Let us clarify definitions of the permutation models we deal with. We refer to \cite[Chapter 4]{j} and \cite[Chapter 15, p. 251]{j1} for the basic terminology and facts concerning permutation models.

Suppose we are given a model $\mathcal{M}$ of $\mathbf{ZFA+AC}$ with an infinite set $A$ of all atoms of $\mathcal{M}$, and a group $\mathcal{G}$ of permutations of $A$. For a set $x\in\mathcal{M}$, we denote by $\TC(x)$ the transitive closure of $x$ in $\mathcal{M}$. Then every permutation $\phi$ of $A$ extends uniquely to an $\in$-automorphism (usually denoted also by $\phi$) of $\mathcal{M}$. For $x\in \mathcal{M}$, we put:
$$\fix_{\mathcal{G}}(x)=\{\phi\in\mathcal{G}: (\forall t\in x)\phi(t)=t\}\text{ and } \sym_{\mathcal{G}}(x)=\{\phi\in\mathcal{G}: \phi(x)=x\}.$$
We refer the readers to \cite[Chapter 4, pp. 46--47]{j} for the definitions of the concepts of a \emph{normal filter} and a \emph{normal ideal}. 

\begin{definition}
\label{s2d9}
\begin{enumerate}
\item[(i)] The \emph{permutation model} $\mathcal{N}$  \emph{determined by} $\mathcal{M}, \mathcal{G}$ \emph{and a normal filter}  $\mathcal{F}$ of subgroups of $\mathcal{G}$ is defined by the equality:
$$\mathcal{N}=\{x\in\mathcal{M}: (\forall t\in\TC(\{x\}))(\sym_{\mathcal{G}}(t)\in\mathcal{F})\}.$$
\item[(ii)] The \emph{permutation model} $\mathcal{N}$ \emph{determined by} $\mathcal{M}, \mathcal{G}$ \emph{and a normal ideal} $\mathcal{I}$ of subsets of the set of all atoms of $\mathcal{M}$ is defined by the equality:
$$\mathcal{N}=\{x\in\mathcal{M}: (\forall t\in\TC(\{x\}))(\exists E\in\mathcal{I}) (\fix_{\mathcal{G}}(E)\subseteq\sym_{\mathcal{G}}(t))\}.$$
\item[(iii)] (Cf. \cite[p. 46]{j} and \cite[p. 251]{j1}.)  A \emph{permutation model} (or, equivalently, a \emph{Fraenkel-Mostowski model}) is every class $\mathcal{N}$ which can be defined by (i).
\end{enumerate}
\end{definition}

\begin{remark}
\label{s2r10}
($a$) Let $\mathcal{F}$ be a normal filter of subgroups of $\mathcal{G}$ and let $x\in \mathcal{M}$. If $\text{sym}_{\mathcal{G}}(x)\in\mathcal{F}$, then $x$ is called \emph{symmetric}. If  every element of $\TC(\{x\})$ is symmetric, then $x$ is called \emph{hereditarily symmetric} (cf. \cite[p. 46]{j} and \cite[p. 251]{j1}). 

($b$) Given a normal ideal $\mathcal{I}$ of subsets of the set $A$ of atoms of $\mathcal{M}$, the filter $\mathcal{F}_{\mathcal{I}}$ of subgroups of $\mathcal{G}$ generated by $\{\fix_{\mathcal{G}}(E): E\in\mathcal{I}\}$ is a normal filter such that the permutation model determined by $\mathcal{M}, \mathcal{G}$ and $\mathcal{F}_{\mathcal{I}}$ coincides with the permutation model determined by $\mathcal{M}, \mathcal{G}$ and $\mathcal{I}$ (see \cite[p. 47]{j}). For $x\in\mathcal{M}$, a set $E\in\mathcal{I}$ such that $\fix_{\mathcal{G}}(E)\subseteq \sym_{\mathcal{G}}(x)$ is called a \emph{support} of $x$.
\end{remark}

In the forthcoming sections, we describe and apply several permutation models. For example, we apply the permutation model which appeared in \cite[the proof to Theorem 2.5]{kerta} and was also used in \cite{kert}, the \emph{Basic Fraenkel Model} (labeled as $\mathcal{N}1$ in \cite{hr}) and the \emph{Mostowski Linearly Ordered Model} (labeled as $\mathcal{N}3$ in \cite{hr}). Let us give definitions of these models and recall some of their properties for future references.

\begin{definition}
\label{s2d11}
(Cf. \cite{kerta}.) Let $\mathcal{M}$ be a model of $\mathbf{ZFA}+\mathbf{AC}$. Let $A$ be the set of all atoms  of $\mathcal{M}$ and let $\mathcal{I}=[A]^{<\omega}$. Assume that:
 \begin{enumerate}
 \item[(i)] $A$ is expressed as $\bigcup\limits_{n\in\mathbb{N}} A_{n}$ where $\{A_n: n\in\mathbb{N}\}$ is a disjoint family such that,  for every $n\in \mathbb{N}$, 
\[
A_{n}=\{a_{n,x}:x\in S(0,\frac{1}{n})\} 
\]%
and $S(0,\frac{1}{n})$ is the circle of the Euclidean plane $\langle \mathbb{R}^{2},\rho_e \rangle$ 
of radius $\frac{1}{n}$, centered at $0$;
\item[(ii)]  $\mathcal{G}$ is the group of all permutations of $A$ that rotate the $A_{n}$'s by an angle $%
\theta _{n}\in \mathbb{R}\ $.
\end{enumerate}
Then the permutation model $\mathcal{N}_{cr}$ determined by $\mathcal{M}, \mathcal{G}$ and the normal ideal $\mathcal{I}$ will be called the \emph{concentric circles permutation model}.
\end{definition}

\begin{remark}
\label{s2r12}
We need to recall some properties of $\mathcal{N}_{cr}$ for applications in this paper. Let us use the notation from Definition \ref{s2d11}. In \cite[the proof to Theorem 2.5]{kerta}, it was proved that $ \{A_{n}:n\in \mathbb{N}\}$ does not have a multiple choice function in $\mathcal{N}_{cr}$.  In \cite[the proof to Theorem 3.5]{kert}, it was proved that $\mathbf{IDI}$ holds in $\mathcal{N}_{cr}$, so $\mathbf{CAC}_{fin}$ also holds in $\mathcal{N}_{cr}$ (see Remark \ref{s1r10}(i)).
\end{remark}

\begin{definition}
\label{s2d13}
 (Cf. \cite[p. 176]{hr} and \cite[Section 4.3]{j}.) Let $\mathcal{M}$ be a model of $\mathbf{ZFA}+\mathbf{AC}$. Let $A$ be the set of all atoms  of $\mathcal{M}$ and let $\mathcal{I}=[A]^{<\omega}$. Assume that:
\begin{enumerate}
\item[(i)] $A$ is a denumerable set;
\item[(ii)] $\mathcal{G}$ is the group of all permutations of $A$.
\end{enumerate}
Then the \emph{Basic Fraenkel Model} $\mathcal{N}1$ is the permutation model determined by $\mathcal{M}$, $\mathcal{G}$ and $\mathcal{I}$. 
\end{definition}

\begin{remark}
\label{s2r14}
It is known that, in $\mathcal{N}1$, the set $A$ of all atoms is amorphous, so $\mathbf{IDI}$ fails (see  \cite[p. 52]{j} and  \cite[pp, 176--177]{hr}). It is also known that $\mathbf{BPI}$ is false in $\mathcal{N}1$ but $\mathbf{CAC}_{fin}$ is true in $\mathcal{N}1$ (see  \cite[p. 177]{hr}).
\end{remark}

\begin{definition}
\label{s2d15} 
(Cf. \cite[p. 182]{hr} and \cite[Section 4.6]{j}.) Let $\mathcal{M}$ be a model of $\mathbf{ZFA}+\mathbf{AC}$. Let $A$ be the set of all atoms  of $\mathcal{M}$ and let $\mathcal{I}=[A]^{<\omega}$.  Assume that:
\begin{enumerate}
\item[(i)] the set $A$ is denumerable and there is a fixed ordering $\leq$ in $A$ such that $\langle A, \leq\rangle$ is order isomorphic to the set of all rational numbers equipped with the standard linear order;
\item[(ii)] $\mathcal{G}$ is the group of all order-automorphisms of $\langle A, \leq\rangle$.
\end{enumerate}
Then the \emph{Mostowski Linearly Ordered Model} $\mathcal{N}3$ is the permutation model determined by $\mathcal{M}, \mathcal{G}$ and $\mathcal{I}$.
\end{definition}

\begin{remark}
\label{s2r16}
It is known that the power set of the set of all atoms is Dedekind-finite in $\mathcal{N}3$, so $\mathbf{IDI}$ fails in $\mathcal{N}3$ (see  \cite[pp. 182--183]{hr}). However, $\mathbf{BPI}$ and $\mathbf{CAC}_{fin}$ are true in $\mathcal{N}3$ (see \cite[p, 183]{hr}).
\end{remark}

 It is well known that, in any permutation model, the power set of any pure set (that is, a set with no atoms in its transitive closure) is well-orderable (see, e.g., \cite[p. 176]{hr}). This can be deduced from the following helpful proposition:
 
 \begin{proposition}
 \label{s2p17}
(Cf. \cite[Item (4.2), p. 47]{j}.) Let $\mathcal{N}$ be the permutation model determined by $\mathcal{M}, \mathcal{G}$ and a normal filter $\mathcal{F}$. For every $x\in\mathcal{N}$, $x$ is well-orderable in $\mathcal{N}$ iff $ \fix_{\mathcal{G}}(x)\in\mathcal{F}$.
 \end{proposition}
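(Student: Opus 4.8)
The plan is to prove both implications directly from the definition of $\mathcal{N}$ as the class of hereditarily symmetric sets, relying on two standard auxiliary facts. First, each $\phi\in\mathcal{G}$, regarded as the induced $\in$-automorphism of $\mathcal{M}$ via $\phi(y)=\{\phi(t):t\in y\}$ for non-atoms $y$, fixes every pure set pointwise—in particular every ordinal—by an easy $\in$-induction on sets whose transitive closure contains no atoms. Second, $\mathcal{F}$, being a filter of subgroups of $\mathcal{G}$, is closed under passing to larger subgroups and under finite intersections, while $\mathcal{N}$ is a transitive subclass of $\mathcal{M}$ closed under pairing; consequently a symmetric set all of whose elements already lie in $\mathcal{N}$ itself belongs to $\mathcal{N}$ (since then every $t\in\TC(\{R\})$ is symmetric).

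For the implication ($\Leftarrow$), suppose $\fix_{\mathcal{G}}(x)\in\mathcal{F}$. Since $\mathcal{M}\models\mathbf{AC}$, I would fix in $\mathcal{M}$ a well-ordering $R\subseteq x\times x$ of $x$. The key observation is that any $\phi\in\fix_{\mathcal{G}}(x)$ fixes every element of $x$, hence fixes every Kuratowski pair $\langle y,z\rangle$ with $y,z\in x$, and therefore satisfies $\phi(R)=R$; thus $\fix_{\mathcal{G}}(x)\subseteq\sym_{\mathcal{G}}(R)$, and closure under supergroups gives $\sym_{\mathcal{G}}(R)\in\mathcal{F}$. It remains to check $R\in\mathcal{N}$. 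Because $x\in\mathcal{N}$ and $\mathcal{N}$ is transitive, every element of $x$ and every member of $\TC(x)$ is hereditarily symmetric, and each pair $\langle y,z\rangle\in R$ is symmetric since $\sym_{\mathcal{G}}(\langle y,z\rangle)\supseteq\sym_{\mathcal{G}}(y)\cap\sym_{\mathcal{G}}(z)\in\mathcal{F}$; combined with $\sym_{\mathcal{G}}(R)\in\mathcal{F}$ this shows $R$ is hereditarily symmetric, so $R\in\mathcal{N}$ and $x$ is well-orderable in $\mathcal{N}$.

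For the implication ($\Rightarrow$), suppose $x$ is well-orderable in $\mathcal{N}$; then inside $\mathcal{N}$ there are an ordinal $\alpha$ and a bijection $f\colon\alpha\to x$, and since $f\in\mathcal{N}$ we have $\sym_{\mathcal{G}}(f)\in\mathcal{F}$. I would show $\sym_{\mathcal{G}}(f)\subseteq\fix_{\mathcal{G}}(x)$. Take $\phi\in\sym_{\mathcal{G}}(f)$, so $\phi(f)=f$. For each $\beta<\alpha$ the pair $\langle\beta,f(\beta)\rangle$ lies in $f$, so $\phi(\langle\beta,f(\beta)\rangle)=\langle\phi(\beta),\phi(f(\beta))\rangle=\langle\beta,\phi(f(\beta))\rangle$ again lies in $f$, using that $\phi$ fixes the ordinal $\beta$; since $f$ is a function this forces $\phi(f(\beta))=f(\beta)$. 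As $\beta$ ranges over $\alpha$, the values $f(\beta)$ exhaust $x=\ran(f)$, so $\phi$ fixes $x$ pointwise, i.e. $\phi\in\fix_{\mathcal{G}}(x)$. Hence $\fix_{\mathcal{G}}(x)\supseteq\sym_{\mathcal{G}}(f)\in\mathcal{F}$, and closure under supergroups yields $\fix_{\mathcal{G}}(x)\in\mathcal{F}$.

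The main obstacle—indeed essentially the only subtle point—is the ($\Rightarrow$) direction: one must express ``well-orderable in $\mathcal{N}$'' through a bijection $f$ from an ordinal that \emph{itself} lies in $\mathcal{N}$, and then exploit that the index $\beta$ is a pure set fixed by every $\phi$, so that stabilizing the graph of $f$ propagates to fixing each value $f(\beta)$. The ($\Leftarrow$) direction is almost formal once one notices that a group fixing $x$ pointwise automatically stabilizes every relation on $x$, so that any well-ordering of $x$ chosen in $\mathcal{M}$ is already symmetric with all of its constituents in $\mathcal{N}$.
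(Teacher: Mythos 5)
Your proof is correct: the paper itself gives no proof of this proposition, citing Jech's \emph{The Axiom of Choice} (Item (4.2), p.~47) instead, and your argument is precisely that standard one — using $\mathbf{AC}$ in $\mathcal{M}$ to get a well-ordering $R$ of $x$ that is stabilized by $\fix_{\mathcal{G}}(x)$ for ($\Leftarrow$), and using a bijection $f\colon\alpha\to x$ in $\mathcal{N}$ together with the fact that automorphisms fix ordinals to show $\sym_{\mathcal{G}}(f)\subseteq\fix_{\mathcal{G}}(x)$ for ($\Rightarrow$). Both directions, including the hereditary-symmetry check that $R\in\mathcal{N}$ and the upward closure of the normal filter, are handled correctly.
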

 
\begin{remark}
\label{s2r18} 
If a statement $\mathbf{A}$ is satisfied in a permutation model, to show that there exists a $\mathbf{ZF}$-model in which $\mathbf{A}$ is satisfied, we use transfer theorems due to Pincus (cf. \cite{pin} and \cite{pin1}). Pincus transfer theorems, together with definitions of a \emph{boundable formula} and an \emph{injectively boundable formula} that are involved in the theorems, are included in \cite[Note 103]{hr}.
\end{remark}
 To our transfer results, we apply mainly the following fragment of the third theorem from \cite[p. 286]{hr}:
 
 \begin{theorem}
 \label{pinth} 
(The Pincus Transfer Theorem.) (Cf. \cite{pin}, \cite{pin1} and \cite[p. 286]{hr}.)  Let $\mathbf{\Phi}$ be a conjunction of statements that are either injectively boundable or $\mathbf{BPI}$. If $\mathbf{\Phi}$ has a permutation model, then $\mathbf{\Phi}$ has a $\mathbf{ZF}$-model.
 \end{theorem}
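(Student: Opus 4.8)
This statement is a metatheorem of Pincus, and within the paper it serves as an imported tool rather than a result to be established from first principles; accordingly, the honest ``proof'' is a pointer to \cite{pin}, \cite{pin1} and \cite[Note 103]{hr}. Nevertheless, the plan for reconstructing the argument is as follows. The starting point is the Jech--Sochor first embedding theorem, which already yields the transfer of a single \emph{boundable} statement: given a permutation model $\mathcal{N}$ built from $\mathcal{M}$, $\mathcal{G}$ and a normal filter $\mathcal{F}$, and given an ordinal $\alpha$ large enough to witness the quantifiers of the statement, one constructs by symmetric forcing over a $\mathbf{ZF}$ ground model a symmetric extension $\mathcal{V}$ together with an $\in$-isomorphism of the part of $\mathcal{N}$ below rank $\alpha$ onto a transitive fragment of $\mathcal{V}$. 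The atoms of $\mathcal{M}$ are replaced by mutually generic sets, the group $\mathcal{G}$ acts on the corresponding names, and the normal filter $\mathcal{F}$ is mirrored by the symmetry group of the names, so that supports in $\mathcal{N}$ correspond to supports in $\mathcal{V}$.

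The first task is to upgrade ``boundable'' to ``injectively boundable''. The point is that an injectively boundable statement asserts the existence or failure of an injection from a definable class into a set whose rank is controlled by a parameter that can itself be injected into a bounded set; such a witness, if it exists in $\mathcal{N}$, can be coded below a fixed rank and is therefore captured by the embedding once $\alpha$ is taken sufficiently large, while conversely any witness appearing in $\mathcal{V}$ reflects back into $\mathcal{N}$ along the isomorphism. One then chooses $\alpha$ so large that each conjunct and all its injective witnesses live below $\alpha$, and argues that the truth of each such conjunct is absolute between $\mathcal{N}$ and $\mathcal{V}$ across the embedding.

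The second task, which is the genuinely delicate one, is to accommodate $\mathbf{BPI}$, since $\mathbf{BPI}$ is not itself injectively boundable: the existence of a prime ideal in an arbitrary Boolean algebra is an unbounded assertion about a proper class of algebras. Here one invokes Pincus's separate preservation result, to the effect that the symmetric construction can be arranged so that $\mathbf{BPI}$ holds in $\mathcal{V}$ whenever it holds in $\mathcal{N}$; equivalently, the forcing and the normal filter of subgroups can be chosen to respect the ultrafilter (compactness) characterisation of $\mathbf{BPI}$. Combining this with the transfer of each injectively boundable conjunct gives the truth of the whole conjunction $\mathbf{\Phi}$ in $\mathcal{V}$, and $\mathcal{V}$ is a $\mathbf{ZF}$-model, as required.

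The main obstacle is precisely the interaction in this last step: one must run a single symmetric forcing construction that simultaneously embeds $\mathcal{N}$ faithfully up to a rank high enough for every injectively boundable conjunct and preserves $\mathbf{BPI}$, and then verify that the chosen forcing, generic objects and normal filter make every conjunct absolute across the embedding at once. Tracking the supports of the symmetric names and checking that the reflection of injective witnesses is compatible with the $\mathbf{BPI}$-preservation argument is where the real content of the theorem lies; since this verification is carried out in full in \cite{pin} and \cite{pin1}, the statement is here quoted rather than reproved.
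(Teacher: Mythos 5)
Your proposal is correct and matches the paper exactly: the paper offers no proof of this theorem at all, quoting it as an external metatheorem of Pincus via \cite{pin}, \cite{pin1} and \cite[p.~286]{hr}, which is precisely the stance you take. Your additional sketch of the underlying machinery (Jech--Sochor embedding for boundable statements, Pincus's upgrade to injectively boundable ones, and his separate $\mathbf{BPI}$-preservation argument) is a faithful outline of where the real content lies, but it is supplementary to what the paper itself does.
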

 
 In the definition of an injectively boundable formula, a notion of an injective cardinality is involved. This notion is given, for instance, in \cite[Item (3), p. 284]{hr}. Let us formulate its equivalent definition below.

\begin{definition}
\label{s2d20} 
For a set $x$, the \emph{injective cardinality} of $x$ is the von Neumann cardinal number $|x|_{-}$ defined as follows:
$$|x|_{-}=\sup\{\kappa: \kappa \text{ is a von Neumann cardinal equipotent to a subset of } x\}.$$
\end{definition}

Now, we are in the position to pass to the main body of the article.

\section{New symmetric models}
\label{s3}

Suppose that $\Phi$ is a form that is satisfied in a $\mathbf{ZFA}$-model. Even if $\mathbf{\Phi}$ fulfills the assumptions of the Pincus Transfer Theorem, it might be complicated to check it and to see well a $\mathbf{ZF}$-model in which $\mathbf{\Phi}$ is satisfied. This is why it is good to give a direct relatively simple description of a $\mathbf{ZF}$-model satisfying $\mathbf{\Phi}$. In the proof to Theorem \ref{s3t1} below, we show a class of symmetric models satisfying $\mathbf{CH}\wedge\mathbf{WO}(\mathcal{P}(\mathbb{R}))\wedge\neg\mathbf{CAC}_{fin}$. In Section \ref{s5}, models of this class are applied to a proof that the conjunction $\mathbf{Part}(\mathbb{R})\wedge\neg \mathbf{M}(C, \hookrightarrow [0,1]^{\mathbb{R}})$ has a $\mathbf{ZF}$-model (see Theorem \ref{s5t11}).

\begin{theorem}
\label{s3t1}
Let $n,\ell\in\omega\setminus\{0,1\}$. There is a symmetric model $N_{n,\ell}$ of $\mathbf{ZF}$ such that 
$$N_{n,\ell}\models \forall m\in n(2^{\aleph_{m}}=\aleph_{m+1})\wedge\neg\mathbf{CAC}_{\ell}.$$
Hence, it is also the case that $$N_{n,\ell}\models \mathbf{CH}\wedge\mathbf{WO}(\mathcal{P}(\mathbb{R}))\wedge\neg\mathbf{CAC}_{fin}.$$ 
\end{theorem}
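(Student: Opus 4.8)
The plan is to build a symmetric extension of a ground model of $\mathbf{ZFC}$ that forces the GCH pattern $2^{\aleph_m}=\aleph_{m+1}$ for all $m\in n$ while simultaneously adjoining a denumerable family of $\ell$-element sets with no choice function. The standard route is a symmetric collapse/product forcing. First I would fix a ground model $V\models\mathbf{ZFC}+\mathbf{GCH}$ (so that the cardinal arithmetic below stays clean). The forcing $\mathbb{P}$ will be a finite-support product $\prod_{k\in\omega}\mathbb{Q}_k$, where each $\mathbb{Q}_k$ adjoins an $\ell$-element set of generic objects $\{a_{k,0},\dots,a_{k,\ell-1}\}$ but deliberately refuses to order them: concretely each coordinate adds (say) $\ell$ mutually generic Cohen reals $\langle x_{k,j}:j<\ell\rangle$, and the unordered set $a_k=\{x_{k,0},\dots,x_{k,\ell-1}\}$ is the object we care about. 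The symmetry group $\mathcal{G}$ is the full support product $\prod_{k\in\omega}\Sym(\ell)$ acting by permuting, within each block $k$, the $\ell$ generic objects, with the finite-support normal filter $\mathcal{F}$ generated by stabilizers of finite unions of blocks.

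In the symmetric model $N_{n,\ell}=\mathcal{N}$ determined by $V$, $\mathbb{P}$, $\mathcal{G}$ and $\mathcal{F}$, the sequence $(a_k)_{k\in\omega}$ is hereditarily symmetric (each $a_k$ is fixed setwise by its block's symmetric group, and the sequence itself is fixed since the support is empty), so $\{a_k:k\in\omega\}\in\mathcal{N}$ is a denumerable family of $\ell$-element sets. The key step is to show this family has no choice function in $\mathcal{N}$: a choice function $c$ would, for cofinitely many $k$, have to select one of the $\ell$ generic objects $x_{k,j}$ in a way independent of a support $E$ (a finite set of blocks). For any $k\notin E$ one finds a permutation $\pi\in\fix_{\mathcal{G}}(E)$ acting nontrivially on block $k$ with $\pi(c)=c$ but $\pi$ moving the value $c(a_k)$ to a different element of $a_k$, the usual contradiction obtained by a density/genericity argument that no condition can decide which generic object is chosen. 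This yields $\mathcal{N}\models\neg\mathbf{CAC}_\ell$, and since $\mathbf{CAC}_\ell$ is a consequence of $\mathbf{CAC}_{fin}$, also $\mathcal{N}\models\neg\mathbf{CAC}_{fin}$.

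The cardinal-arithmetic half is where one must be careful, and I expect it to be the main obstacle. Because the symmetric model sits between $V$ and the full generic extension $V[\mathbb{P}]$, I would argue that $\mathbb{P}$ is sufficiently small and homogeneous that it adds no new subsets that disturb the GCH pattern on $\aleph_0,\dots,\aleph_n$: each $\mathbb{Q}_k$ is a c.c.c. (indeed countable) forcing, so finite-support products of them remain c.c.c., preserve all cardinals and cofinalities, and a nice-names counting gives $V[\mathbb{P}]\models 2^{\aleph_m}=\aleph_{m+1}$ for every $m<n$ exactly as in the Cohen model, using $\mathbf{GCH}$ in $V$. Crucially the reals, and more generally the relevant power sets, of $\mathcal{N}$ and of $V[\mathbb{P}]$ agree enough that $\mathcal{N}\models\forall m\in n\,(2^{\aleph_m}=\aleph_{m+1})$; this requires checking that every subset of a well-orderable set of $\mathcal{N}$-cardinality $\le\aleph_n$ is itself hereditarily symmetric, which follows because such subsets have names supported by finitely many blocks. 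Specializing $m=0$ gives $\mathbf{CH}$.

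Finally, $\mathbf{WO}(\mathcal{P}(\mathbb{R}))$ holds in $\mathcal{N}$ by the general fact — the analogue for symmetric forcing of Proposition \ref{s2p17} — that the power set of a pure (atomless, well-orderable) set is well-orderable in the symmetric model: here $\mathbb{R}^{\mathcal{N}}$ is well-orderable because $2^{\aleph_0}=\aleph_1$ forces a bijection with $\aleph_1$, and then $\mathcal{P}(\mathbb{R})^{\mathcal{N}}$ injects into a well-orderable set of names, so it too is well-orderable. This establishes the second displayed conjunction $\mathbf{CH}\wedge\mathbf{WO}(\mathcal{P}(\mathbb{R}))\wedge\neg\mathbf{CAC}_{fin}$ from the first. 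The delicate points throughout are the symmetry bookkeeping (verifying hereditary symmetry of the objects we assert lie in $\mathcal{N}$) and the transfer of exact cardinal values from $V[\mathbb{P}]$ down to $\mathcal{N}$; the non-existence of the choice function is comparatively routine once the support machinery is set up.
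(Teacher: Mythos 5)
There is a fatal flaw in your construction, and it is precisely at the point you flagged as ``the main obstacle'': your choice of Cohen \emph{reals} as the generic objects to be permuted makes the second half of the theorem false in your model. In your symmetric model $\mathcal{N}$, each $a_k=\{x_{k,0},\dots,x_{k,\ell-1}\}$ is a finite set of reals, each $x_{k,j}$ lies in $\mathcal{N}$ (its canonical name has support the single block $k$), and the sequence $(a_k)_{k\in\omega}$ lies in $\mathcal{N}$. Consequently, if $\mathbb{R}$ were well-orderable in $\mathcal{N}$, the map $k\mapsto(\text{least element of } a_k)$ would be a choice function for $\{a_k:k\in\omega\}$ in $\mathcal{N}$. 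So your argument for $\neg\mathbf{CAC}_\ell$, if it succeeds, \emph{forces} $\mathbb{R}$ to be non-well-orderable in $\mathcal{N}$; hence $2^{\aleph_0}=\aleph_1$ (equipotence of $\mathcal{P}(\omega)$ with $\aleph_1$) and $\mathbf{WO}(\mathcal{P}(\mathbb{R}))$ both fail there, and the target conjunction $\mathbf{CH}\wedge\mathbf{WO}(\mathcal{P}(\mathbb{R}))\wedge\neg\mathbf{CAC}_{fin}$ is unobtainable. Your attempted rescue conflates two different things: even if the set $\mathcal{P}(\omega)^{\mathcal{N}}$ agreed with $\mathcal{P}(\omega)^{V[G]}$, the \emph{well-ordering} (or bijection with $\aleph_1$) witnessing CH in $V[G]$ is a set of ordered pairs whose name is not hereditarily symmetric --- it cannot be, since its presence in $\mathcal{N}$ would contradict the choice failure. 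Transfer of cardinal arithmetic from $V[G]$ down to a symmetric submodel is not automatic; in $\mathbf{ZF}$, $2^{\aleph_m}=\aleph_{m+1}$ requires an actual bijection inside the model.

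The paper avoids this by making the anti-choice ``damage'' occur far above $\mathbb{R}$. Its forcing is $\mathbb{P}=\Fn(\omega\times\ell\times\omega_{n}\times\omega_{n},2,\omega_n)$ with conditions of size $<\aleph_n$, which is $\omega_n$-closed (here $n\geq 2$), so it adds \emph{no} new subsets of $\omega_m$ for any $m\in n$ --- in particular no new reals and no new sets of reals --- while adding new subsets of $\omega_n$. The $\ell$-element sets are $A_k=\{A_{k,0},\dots,A_{k,\ell-1}\}$, where each $A_{k,t}$ is a set of generic subsets of $\omega_n$; the permutations act on the index set $\omega\times\ell\times\omega_n$. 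Then for any intermediate model $M\subseteq N\subseteq M[G]$ one has $\mathcal{P}(\omega_m)^{N}=\mathcal{P}(\omega_m)^{M}$ for $m\in n$, and the bijections with $\aleph_{m+1}$ from the ground model $M$ (where $\mathbf{AC}$ and the GCH pattern hold) are elements of $M\subseteq N$, so the displayed cardinal arithmetic, $\mathbf{CH}$, and $\mathbf{WO}(\mathcal{P}(\mathbb{R}))$ hold in the symmetric model for free, compatibly with the failure of $\mathbf{CAC}_\ell$ at the level of $\mathcal{P}(\mathcal{P}(\omega_n))$. Your support machinery and permutation argument for killing the choice function are essentially the right shape, but they must be run over this $\omega_n$-closed forcing rather than over Cohen forcing; no patch of your Cohen-real construction can work.
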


\begin{proof}
Let us use the terminology and results from \cite[Chapter VII]{ku1} and \cite[Chapter 5]{j}. By \cite[Theorem 6.18, p. 216]{ku1}, we can fix a countable transitive model $M$ of $\mathbf{ZFC}+\forall m\in n(2^{\aleph_{m}}=\aleph_{m+1})$.  Our plan is to construct a symmetric extension model $N_{n,\ell}$ of $M$ with the required properties.

Let $\mathbb{P}=\Fn(\omega\times\ell\times\omega_{n}\times\omega_{n},2,\omega_n)$ be the set of all partial functions $p$ with $|p|<\aleph_{n}$, $\dom(p)\subseteq\omega\times \ell\times\omega_{n}\times\omega_{n}$ and $\ran(p)\subseteq 2=\{0,1\}$, partially ordered by reverse inclusion, i.e., for $p,q\in\mathbb{P}$, $p\le q$ if and only if $p\supseteq q$. The poset $\langle\mathbb{P},\leq\rangle$ has the empty function as its maximum element, which we denote by $\mathbf{1}$. Furthermore, since $\omega_{n}$ is a regular cardinal, it follows from \cite[Lemma 6.13, p. 214]{ku1} that $\langle \mathbb{P},\le\rangle$ is an $\omega_{n}$-closed poset. Hence, by \cite[Theorem 6.14, p. 214]{ku1}, forcing with $\mathbb{P}$ adds no new subsets of $\omega_{m}$ for $m\in n$, and hence it adds no new reals or sets of reals, but it does add new subsets of $\omega_{n}$. Furthermore, by \cite[Corollary 6.15, p. 215]{ku1}, we have that $\mathbb{P}$ preserves cofinalities $\leq\omega_{n}$, and hence cardinals $\leq\omega_{n}$. 

Let $G$ be a $\mathbb{P}$-generic filter over $M$, and let $M[G]$ be the corresponding generic extension model of $M$. In view of the above, for every model $N$ with $M\subseteq N\subseteq M[G]$, we have the following: 
$$N\models\forall m\in n(2^{\aleph_{m}}=\aleph_{m+1}).$$
By \cite[Theorem 4.2, p. 201]{ku1}, $\mathbf{AC}$ is true  $M[G]$. 

In $M[G]$, for $k\in\omega$, $t\in\ell$, and $i\in\omega_{n}$, we define the following sets along with their canonical names:
\begin{enumerate}
\item $a_{k,t,i}=\{j\in\omega_{n}:\exists p\in G(p(k,t,i,j)=1)\}$,

$\overline{a_{k,t,i}}=\{\langle \check{j},p\rangle:j\in\omega_{n}\wedge p\in \mathbb{P}\wedge p(k,t,i,j)=1\}$.
\item $A_{k,t}=\{a_{k,t,i}:i\in\omega_{n}\}$,

$\overline{A_{k,t}}=\{\langle\overline{a_{k,t,i}},\mathbf{1}\rangle:i\in\omega_{n}\}$.
\item $A_k=\{A_{k,0},A_{k,1},\ldots,A_{k,(\ell-1)}\}$,

$\overline{A_k}=\{\langle \overline{A_{k,0}},\mathbf{1}\rangle,\langle\overline{A_{k,1}},\mathbf{1}\rangle,\ldots,\langle\overline{A_{k,(\ell-1)}},\mathbf{1}\rangle\}$.
\item $\mathcal A=\{A_{k}:k\in\omega\}$,

$\overline{\mathcal A}=\{\langle\overline{A_k},\mathbf{1}\rangle:k\in\omega\}$.
\end{enumerate}

Now, every permutation $\phi$ of $\omega\times\ell\times\omega_{n}$ induces an order-automorphism of $\langle\mathbb{P},\le\rangle$ by requiring, for every $p\in \mathbb{P}$, the following:
\begin{equation}\label{eq:2}
\begin{aligned}
\dom\phi(p)&=\{\langle\phi(k,t,i),j\rangle:\langle k,t,i,j\rangle\in\dom(p)\},\\
\phi(p)(\phi(k,t,i),j)&=p(k,t,i,j).
\end{aligned}
\end{equation}
Let $\mathcal G$ be the group of all order-automorphisms of $\langle\mathbb{P},\le\rangle$ induced (as in (\ref{eq:2})) by all those permutations $\phi$ of $\omega\times\ell\times\omega_{n}$ which are defined as follows.

 For every $k\in\omega$, let $\sigma_{k}$ be a permutation of $\ell=\{0,1,\ldots,\ell-1\}$ and also let $\eta_{k}$ be a permutation of $\omega_{n}$. We define 
\begin{equation}
\label{eq:a}
\phi(k,t,i)=\langle k,\sigma_{k}(t),\eta_{k}(i)\rangle,
\end{equation}
for all $\langle k,t,i\rangle \in\omega\times\ell\times\omega_{n}$.
By (\ref{eq:a}), it follows that for every $\phi\in\mathcal G$ such that $\phi(k,t,i)=\langle k,\sigma_{k}(t),\eta_{k}(i)\rangle$, we have that, for every $k\in\omega$ and every $t\in\ell$,  
\begin{equation}\label{eq1}\phi(\overline{A_{k,t}})=\overline{A_{k,\sigma_{k}(t)}},
\end{equation}
and thus, for every $k\in\omega$,
\begin{equation}\label{eq1-2}\phi(\overline{A_{k}})=\overline{A_k}.\end{equation}
It follows that for every $\phi\in\mathcal{G}$,
\begin{equation}
\label{eq1-3}\phi(\overline{\mathcal A})=\overline{\mathcal A}.\end{equation}

For every finite subset $E\subseteq\omega\times\ell\times\omega_{n}$, we let $\fix_{\mathcal G}(E)=\{\phi\in\mathcal G:{\forall e\in E}(\phi(e)=e)\}$ and we also let $\Gamma$ be the filter of subgroups of $\mathcal G$ generated by the filter base $\{\fix_{\mathcal G}(E):E\in[\omega\times \ell\times\omega_{n}]^{<\omega}\}$. Then $\Gamma$ is a normal filter on $\mathcal{G}$ (see \cite[Section 5.2, p. 64]{j} for the definition of the term ``normal filter''). An element $x\in M$ is called \emph{symmetric} if there exists a finite subset $E\subseteq\omega\times\ell\times\omega_{n}$ such that, for every $\phi\in\fix_{\mathcal G}(E)$, we have $\phi(x)=x$; if such a set $E$ exists, we call $E$ a \emph{support} of $x$. An element $x\in M$ is called \emph{hereditarily symmetric} if $x$ and all elements of the transitive closure of $x$ are symmetric. Let $\HS$ be the set of all hereditarily symmetric names in $M$. As in \cite[Definition 2.7, p. 189]{ku1}, for $\tau\in\HS$, let $\tau_{G}$ denote the value of the name $\tau$. Let $$N_{n,\ell}=\{\tau_{G}:\tau\in\HS\}$$
 be the symmetric extension model of $M$. Then  $N_{n,\ell}\subset M[G]$.

In view of the observations at the beginning of the proof, we have
$$N_{n,\ell}\models\forall m\in n(2^{\aleph_{m}}=\aleph_{m+1}),$$
and thus
$$N_{n,\ell}\models\mathbf{CH}\wedge\mathbf{WO}(\mathcal{P}(\mathbb{R})).$$

For $k\in\omega$, $t\in\ell$, and $i\in\omega_{n}$, the sets $a_{k,t,i}$, $A_{k,t}$, $A_k$, and $\mathcal A$ are all elements of $N_{n,\ell}$. Let us fix $k\in\omega$, $t\in\ell$, and $i\in\omega_{n}$. Then $E=\{\langle k,t,i\rangle\}$ is a support of $\overline{a_{k,t,i}}$ and $\overline{A_{k,t}}$.  By (\ref{eq1-2}) and (\ref{eq1-3}), we have that, for every $\phi\in\mathcal G$, $\phi(\overline{A_{k}})=\overline{A_k}$ and $\phi(\overline{\mathcal A})=\overline{\mathcal A}$. Thus, $a_{k,t,i}$, $A_{k,t}$, $A_k$, and $\mathcal A$ all belong to $N_{n,\ell}$. For $\sigma,\tau\in\HS$, let $\op(\sigma,\tau)$ be the name for the ordered pair $\langle\sigma_G,\tau_G\rangle$ (see \cite[Definition 2.16, p. 191]{ku1}). Let $f=\{\langle k, A_k\rangle: k\in\omega\}$ and $\dot{f}=\{\langle\op(\check{k},\overline{A_k}),\mathbf{1}\rangle:k\in\omega\}$. Since, for every $\phi\in\mathcal{G}$, $\phi(\dot{f})=\dot{f}$, we deduce that $\dot{f}$ is an $\HS$-name for the mapping $f$ (in $M[G]$). This proves that $\mathcal A$ is denumerable in $N_{n,\ell}$.

Now, by making suitable adjustments to the proof that $\mathbf{CAC}_{2}$ is false in the Second Cohen Model (see  \cite[Section 5.4, p. 68]{j}), one may verify that $\mathcal{A}$ has no partial choice function in the model $N_{n,\ell}$. We invite interested readers to fill in the missing details.
\end{proof}

\begin{remark}
\label{s3r2}
Let us note that Theorem \ref{s3t1} provides a class of symmetric models satisfying $\mathbf{CH}\wedge\mathbf{WO}(\mathcal{P}(\mathbb{R}))\wedge\neg\mathbf{CAC}_{fin}$.
\end{remark}

\section{Around $\mathbf{ICMDI}$ and $\mathbf{M}(TB, WO)$}
\label{s4}

Since every compact metric space is totally bounded and every infinite separable Hausdorff space is Dedekind-infinite, let us begin our investigations of the forms of type $\mathbf{M}(C, \square)$ with a deeper look at the forms $\mathbf{M}(TB, WO)$, $\mathbf{M}(TB, S)$ and $\mathbf{ICMDI}$. We include a simple proof to the following proposition for completeness. 

\begin{proposition}
\label{s4p1}
$(\mathbf{ZF})$ Let $\mathbf{X}=\langle X, d\rangle$ is a totally bounded metric space such that $X$ is well-orderable. Then $\mathbf{X}$ is separable.
\end{proposition}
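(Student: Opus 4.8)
The plan is to build a countable dense subset of $\mathbf{X}$ directly, exploiting the fact that a well-ordering of $X$ lets us make canonical choices without invoking any form of choice. The key point is that total boundedness gives us, for each $n\in\mathbb{N}$, the \emph{existence} of some finite $\frac{1}{n}$-net, but a priori there are many such nets and no obvious way to pick one uniformly; the well-ordering of $X$ resolves this. First I would fix a well-ordering $\preceq$ of $X$. For each $n\in\mathbb{N}$, consider the collection of all finite $\frac{1}{n}$-nets in $\mathbf{X}$; by total boundedness this collection is non-empty. The plan is to select from it a canonical member using $\preceq$.

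The canonical selection can be made as follows. Since $X$ is well-ordered, the set $[X]^{<\omega}$ of finite subsets of $X$ is also well-orderable in a canonical way induced by $\preceq$ (for instance, order finite subsets first by their cardinality and then lexicographically with respect to $\preceq$ when listed in increasing $\preceq$-order). For each $n\in\mathbb{N}$, let $D_n$ be the $\preceq$-least element of this well-ordered set $[X]^{<\omega}$ that happens to be a $\frac{1}{n}$-net; such an element exists because the set of finite $\frac{1}{n}$-nets is a non-empty subset of a well-ordered set. This defines, without any appeal to choice, a specific sequence $(D_n)_{n\in\mathbb{N}}$ of finite nets, so in fact $\mathbf{X}$ is even \emph{strongly} totally bounded. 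Setting $D=\bigcup_{n\in\mathbb{N}}D_n$, I would then check that $D$ is the desired dense set.

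To verify density and countability, note that $D$ is a countable union of finite sets each of which is a subset of the well-ordered set $X$, hence $D$ is itself well-orderable and countable (a countable union of finite subsets of a fixed well-ordered set is countable in $\mathbf{ZF}$, since the well-ordering lets us enumerate the union without choice). For density, given any $x\in X$ and any $\varepsilon>0$, pick $n\in\mathbb{N}$ with $\frac{1}{n}<\varepsilon$; since $D_n$ is a $\frac{1}{n}$-net, there is some $y\in D_n\subseteq D$ with $d(x,y)<\frac{1}{n}<\varepsilon$, so $x\in\mathrm{cl}_{\mathbf{X}}(D)$. Thus $D$ is dense in $\mathbf{X}$, proving separability.

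The only genuinely delicate point is making the selection of the nets canonical so that no choice principle sneaks in: the main obstacle, were $X$ merely totally bounded without well-orderability, is exactly that one cannot simultaneously choose a finite net for each $n$. Here the hypothesis that $X$ is well-orderable removes this obstacle completely, since every non-empty subcollection of the well-orderable set $[X]^{<\omega}$ has a canonical least element. Everything else is a routine verification that the resulting $D$ is countable and dense, both of which follow in $\mathbf{ZF}$ from the well-orderability of $X$.
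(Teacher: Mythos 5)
Your proof is correct and follows essentially the same strategy as the paper's: both use the well-ordering of $X$ to induce a canonical well-ordering on a set of finite candidate nets (the paper uses tuples in $\bigcup_{n\in\mathbb{N}}(X^n\times\{n\})$, you use $[X]^{<\omega}$ ordered by cardinality then lexicographically), select the least finite $\frac{1}{n}$-net for each $n$, and take the union to obtain a countable dense set. The verification of countability and density is carried out correctly and choice-free, so nothing further is needed.
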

\begin{proof}
Since $X$ is well-orderable, so is the set $Y=\bigcup\limits_{n\in\mathbb{N}}(X^{n}\times\{n\})$. Let $\leq$ be a fixed well-ordering in $Y$. For every $m\in\mathbb{N}$, let $y_m=\langle x_m, k_m\rangle\in X^{k_m}\times\{k_m\}$ be the first element of $\langle Y, \leq\rangle$ such that $X=\bigcup\{B_d(x_m(i),\frac{1}{m}): i\in k_m\}$. The set $D=\bigcup\limits_{m\in\mathbb{N}}\{x_m(i): i\in k_m\}$ is countable and dense in $\mathbf{X}$.
\end{proof}

\begin{theorem}
\label{s4t2}
$(\mathbf{ZF})$ 
\begin{enumerate}
\item[(i)] $\mathbf{M}(TB, WO)\rightarrow\mathbf{M}(TB, S)$ and $\mathbf{M}(C, WO)\rightarrow \mathbf{M}(C, S)$. None of these implications is reversible.
\item[(ii)] $\mathbf{M}(TB, WO)\rightarrow \mathbf{M}(C, WO)\rightarrow\mathbf{M}(C, S)\rightarrow\mathbf{ICMDI}$.
\item[(iii)] (Cf. \cite[Theorem 7 (i)]{k}.)  $\mathbf{CAC}\rightarrow \mathbf{M}(TB, S)\rightarrow\mathbf{M}(C, S)$. 
\item[(iv)] Neither $\mathbf{M}(TB, WO)$ nor $\mathbf{M}(TB, S)$ implies $\mathbf{CAC}$.
\end{enumerate}
\end{theorem}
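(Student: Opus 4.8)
The positive implications are all routine given the tools already in hand. For the two displayed implications of (i), $\mathbf{M}(TB,WO)\to\mathbf{M}(TB,S)$ is immediate from Proposition~\ref{s4p1}, and $\mathbf{M}(C,WO)\to\mathbf{M}(C,S)$ follows the same way once one observes that a compact metrizable space, equipped with any compatible metric, is a compact metric space and hence totally bounded in $\mathbf{ZF}$ (a finite subcover of the cover by $\varepsilon$-balls is a finite $\varepsilon$-net); Proposition~\ref{s4p1} then converts well-orderability into separability. Part (ii) is a chain: $\mathbf{M}(TB,WO)\to\mathbf{M}(C,WO)$ because every compact metrizable space is totally bounded in a compatible metric; $\mathbf{M}(C,WO)\to\mathbf{M}(C,S)$ is the second implication of (i); and $\mathbf{M}(C,S)\to\mathbf{ICMDI}$ because a countable dense subset of an infinite metrizable (hence $T_1$) space cannot be finite, so it is denumerable and yields an injection of $\omega$ into the space. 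For (iii), the first implication is Theorem~\ref{s2t1} and the second is again the observation that compact metrizable spaces are totally bounded, so $\mathbf{M}(TB,S)$ applies to them.

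To see that neither displayed implication in (i) reverses, I would separate $\mathbf{M}(TB,S)$ from $\mathbf{M}(TB,WO)$ and $\mathbf{M}(C,S)$ from $\mathbf{M}(C,WO)$ in one model. The key remark is that $\mathbf{M}(TB,WO)$ and $\mathbf{M}(C,WO)$ each imply $\mathbf{WO}(\mathbb{R})$: the interval $[0,1]$ is a totally bounded, compact metrizable space with $|[0,1]|=|\mathbb{R}|$, so well-ordering it well-orders $\mathbb{R}$. Now fix a known $\mathbf{ZF}$-model of $\mathbf{CAC}$ (indeed of $\mathbf{DC}$) in which $\mathbb{R}$ is not well-orderable, e.g. Solovay's model or a model of $\mathbf{ZF}+\mathbf{DC}$ in which every set of reals has the Baire property. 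There $\mathbf{M}(TB,S)$ holds by Theorem~\ref{s2t1}, hence $\mathbf{M}(C,S)$ holds by (iii), while $\mathbf{M}(TB,WO)$ and $\mathbf{M}(C,WO)$ both fail since $[0,1]$ is not well-orderable.

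For (iv), since $\mathbf{M}(TB,WO)\to\mathbf{M}(TB,S)$, it suffices to produce a single model of $\mathbf{M}(TB,WO)\wedge\neg\mathbf{CAC}$, and I would use the Mostowski Linearly Ordered Model $\mathcal{N}3$. There $\neg\mathbf{IDI}$ holds (Remark~\ref{s2r16}), hence $\neg\mathbf{CAC}$ (as $\mathbf{CAC}$ implies $\mathbf{IDI}$), while $\mathbf{BPI}$ holds and $\mathbb{R}$, being a pure set, is well-orderable. To verify $\mathbf{M}(TB,WO)$, take a totally bounded $\langle X,d\rangle\in\mathcal{N}3$ with finite support $E$; then $\fix_{\mathcal{G}}(E)$ acts on $X$ by isometries. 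Since $d$ is invariant, its value on a pair is constant along each $\fix_{\mathcal{G}}(E)$-orbit of pairs, and because only finitely many order types of the relevant finite supports occur relative to $E$, $d$ takes only finitely many values on $O\times O$ for each orbit $O\subseteq X$. Thus each $O$ is uniformly discrete, and total boundedness forces it to be finite. Consequently $X$ is a disjoint union of finite orbits; the set of these orbits is well-orderable by Proposition~\ref{s2p17} (each orbit is setwise fixed by $\fix_{\mathcal{G}}(E)$), and $X$ carries a linear order because $\mathbf{BPI}$ holds. Combining a well-order of the orbits lexicographically with the induced (finite, hence well-ordered) linear order inside each orbit well-orders $X$. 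Hence $\mathcal{N}3\models\mathbf{M}(TB,WO)\wedge\neg\mathbf{CAC}$, and a $\mathbf{ZF}$-model is then extracted via the Pincus Transfer Theorem (Theorem~\ref{pinth}), with $\mathbf{BPI}$ admitted as a conjunct.

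The main obstacle is precisely this verification of $\mathbf{M}(TB,WO)$ inside $\mathcal{N}3$. The delicate point is that in $\mathbf{ZF}$ total boundedness only guarantees the existence of a finite $\varepsilon$-net for each $\varepsilon$, not a uniformly chosen sequence of nets, so one cannot simply separabilize and invoke $|X|\le|\mathbb{R}|$; one must instead exploit the homogeneity of $\mathcal{N}3$ to bound the number of distance values on stabilizer-orbits and force their finiteness. I also expect to need some care in confirming that $\mathbf{M}(TB,WO)\wedge\neg\mathbf{CAC}$, together with $\mathbf{BPI}$, meets the injective-boundability hypotheses of Theorem~\ref{pinth}. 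By contrast, the positive implications in (i)--(iii) are entirely routine.
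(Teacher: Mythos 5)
Your parts (i)--(iii), including the non-reversibility argument via a model of $\mathbf{ZF}+\mathbf{DC}$ in which $\mathbb{R}$ is not well-orderable, are correct and essentially what the paper does (the paper uses Feferman's model $\mathcal{M}2$ for exactly this purpose; your Solovay-style model works too, at the cosmetic cost of an inaccessible unless you use the Baire-property variant). Your verification that $\mathcal{N}3\models\mathbf{M}(TB,WO)\wedge\neg\mathbf{CAC}\wedge\mathbf{BPI}$ is also sound in outline: it is the content of the paper's Theorem \ref{s4t6}, which the paper proves via Lemma \ref{s4l5} (pulling the metric back to an interval of atoms and showing its range is $\{0,r\}$) rather than by your count of order types of pair-orbits, but both arguments rest on the same homogeneity of $\langle A,\leq\rangle$, and your orbit-finiteness step can be made rigorous.

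The genuine gap is the final step of (iv): transferring $\mathbf{M}(TB,WO)\wedge\neg\mathbf{CAC}$ to $\mathbf{ZF}$ by Theorem \ref{pinth}. That theorem applies only to conjunctions of $\mathbf{BPI}$ with \emph{injectively boundable} statements. $\neg\mathbf{CAC}$ is boundable, but $\mathbf{M}(TB,WO)$ is a universally quantified well-orderability assertion about a class of spaces whose injective cardinality admits no fixed bound $\aleph_{\xi}$ provable in $\mathbf{ZF}$: under $\mathbf{AC}$ the space $[0,1]$ is totally bounded and $|[0,1]|_{-}=2^{\aleph_0}$, which can be arbitrarily large. So $\mathbf{M}(TB,WO)$ does not have the shape $(\forall x)(|x|_{-}\leq\aleph_{\xi}\rightarrow\Phi(x))$ with $\Phi$ boundable that underlies Lemma \ref{s4l11} and Proposition \ref{s4p22}, and you give no reduction to such a shape; conclusions asserting well-orderability are precisely the kind that obstruct Fraenkel--Mostowski transfer. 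The paper itself exhibits the danger: in Theorem \ref{s4t23}, $\mathbf{LW}$ holds in the permutation model of part (i) but must be dropped from the transferred statement in part (ii), since $\mathbf{LW}$ is equivalent to $\mathbf{AC}$ in $\mathbf{ZF}$ (Remark \ref{s1r15}). Consistently with this, the paper never transfers Theorem \ref{s4t6}; for (iv) it argues instead from a direct $\mathbf{ZF}$ construction, namely the model in the proof of Theorem 15 of \cite{k} (a $\mathbf{ZF}+\neg\mathbf{CAC}$ model in which every sequentially bounded metric space is well-orderable and separable), together with the $\mathbf{ZF}$ fact from Theorem 7(vii) of \cite{k} that totally bounded metric spaces are sequentially bounded. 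As written, your argument for (iv) establishes only the $\mathbf{ZFA}$/permutation-model consistency of $\mathbf{M}(TB,WO)\wedge\neg\mathbf{CAC}$; the $\mathbf{ZF}$ claim needs a different argument, such as the paper's.
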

\begin{proof}
It follows from Proposition \ref{s4p1} that the implications from (i) are both true. It is known that, in Feferman's model $\mathcal{M}2$ in \cite{hr}, $\mathbf{CAC}$ is true but $\mathbb{R}$ is not well-orderable (see \cite[p. 140]{hr}). Then $[0,1]$ is a compact, metrizable but not well-orderable space in $\mathcal{M}2$. Hence $\mathbf{M}(TB, S)\wedge\neg\mathbf{M}(TB, WO)$ and $\mathbf{M}(C, S)\wedge\neg\mathbf{M}(C, WO)$ are both true in $\mathcal{M}2$. This completes the proof to (i). In view of (i), it is obvious that (ii) holds. It is known from \cite{k} that (iii) also holds. It follows from the first implication of (i) that to prove (iv), it suffices to show that $\mathbf{M}(TB, WO)$ does not imply $\mathbf{CAC}$. 

It was shown in \cite[the proof to Theorem 15]{k} that there exists a model $\mathcal{M}$ of $\mathbf{ZF}+\neg\mathbf{CAC}$ in which it is true that if a metric space $\mathbf{X}=\langle X, d\rangle$ is sequentially bounded (i.e., every sequence of points of $\mathbf{X}$ has a Cauchy's subsequence), then $\mathbf{X}$ is well-orderable and separable. By  \cite[Theorem 7 (vii)]{k}, every totally bounded metric space is sequentially bounded. This shows that exists a model $\mathcal{M}$ of $\mathbf{ZF}$ in which $\mathbf{M}(TB, WO)\wedge\neg\mathbf{CAC}$ is true. Hence (iv) holds. 
\end{proof}
 That $\mathbf{ICMDI}$ does not imply $\mathbf{M}(C, S)$ is shown in Proposition \ref{s5p1}(iv). It is unknown whether $\mathbf{M}(C, WO)$ is equivalent to or weaker than $\mathbf{M}(TB, WO)$ in $\mathbf{ZF}$.
 
  To compare $\mathbf{M}(C, S)$ with $\mathbf{M}(TB, S)$, we recall that it was proved in \cite{kyri} that the implication $\mathbf{M}(TB, S)\rightarrow \mathbf{CAC}(\mathbb{R})$ holds in $\mathbf{ZF}$; however, the implication $\mathbf{CAC}\rightarrow \mathbf{M}(TB, S)$ of Theorem \ref{s2t1} is not reversible in $\mathbf{ZF}$. On the other hand, it is known that $\mathbf{CAC}(\mathbb{R})$ and $\mathbf{M}(C,S)$ are independent of each other in $\mathbf{ZF}$ (see, e.g., \cite{k}). The  following proposition, together with the fact that $\mathbf{M}(TB, S)$ implies $\mathbf{M}(C, S)$, shows that $\mathbf{M}(TB, S)$ is essentially stronger than $\mathbf{M}(C, S)$ in $\mathbf{ZF}$.

\begin{proposition}
\label{s4p03} $(\mathbf{ZF})$
\begin{enumerate}
\item[(i)] (Cf. \cite[Proposition 8]{kyri}.) If every totally bounded metric space is strongly totally bounded, then $\mathbf{CAC}(\mathbb{R})$ holds.
\item[(ii)] In Cohen's Original Model  $\mathcal{M}$1 of \cite{hr} the following hold: $\mathbf{M}(C, S)$ is true, $\mathbf{M}(TB, S)$ is false and there exists a totally bounded metric space which is not strongly totally bounded.
\item[(iii)] $\mathbf{M}(C, S)$ does not imply $\mathbf{M}(TB, S)$. 
\end{enumerate}
\end{proposition}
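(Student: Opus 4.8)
The three parts are tightly linked, so the plan is to establish (i) and (ii) and then read off (iii). Since (i) is essentially \cite[Proposition 8]{kyri}, the real work is to reconstruct its argument in the language of the metric $(\ast)$ of Proposition \ref{s2p8} and then to combine it with known facts about Cohen's model $\mathcal{M}1$.

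For (i), let $\{A_n:n\in\mathbb{N}\}$ be a denumerable family of non-empty subsets of $\mathbb{R}$. After composing with a fixed increasing homeomorphism of $\mathbb{R}$ onto the interval $(n,n+1)$, I may assume that the $A_n$ are pairwise disjoint bounded subsets of $\mathbb{R}$; a choice function for the modified family canonically yields one for the original. Equipping each $A_n$ with the metric $\rho_n$ inherited from $\mathbb{R}$ and feeding these data into $(\ast)$ produces a metric space $\mathbf{X}=\langle X,d\rangle$ with $X=\{\infty\}\cup\bigcup_n A_n$. First I would check that $\mathbf{X}$ is totally bounded in $\mathbf{ZF}$: given $\varepsilon=1/N$, the ball $B_d(\infty,\varepsilon)$ already covers every $A_n$ with $n>N$, while each of the finitely many remaining $A_n$ is a bounded subset of $\mathbb{R}$ and hence admits a finite $\varepsilon$-net obtained by a finite (choice-free) selection from finitely many rational sub-intervals; a finite union of these is a finite $\varepsilon$-net for $\mathbf{X}$. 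Now suppose $\mathbf{X}$ is strongly totally bounded, witnessed by a sequence $(D_m)_{m\in\mathbb{N}}$ of finite $\frac1m$-nets. The key point is that whenever $m>n$, every point of $A_n$ lies within $\frac1m$ of some point of $D_m$, and by the definition of $d$ that net point can be neither $\infty$ nor a point of some $A_k$ with $k\neq n$ (those are at distance $\geq\frac1n>\frac1m$); hence $D_m\cap A_n\neq\emptyset$. Thus $n\mapsto D_{n+1}\cap A_n$ is a multiple choice function of $\{A_n\}$, and since each $D_{n+1}\cap A_n$ is a non-empty finite subset of $\mathbb{R}$, sending $n$ to its least element in the usual order of $\mathbb{R}$ gives a genuine choice function. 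This establishes $\mathbf{CAC}(\mathbb{R})$.

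For (ii), I would invoke the recorded properties of $\mathcal{M}1$. In $\mathcal{M}1$ the Boolean prime ideal theorem $\mathbf{BPI}$ holds, so Theorem \ref{s2t7}(iii) gives that $\mathbf{M}(C,S)$ is true in $\mathcal{M}1$. On the other hand, the set of added Cohen reals is an infinite Dedekind-finite subset of $\mathbb{R}$, so $\mathbf{IDI}(\mathbb{R})$ fails; since $\mathbf{CAC}(\mathbb{R})$ implies $\mathbf{IDI}(\mathbb{R})$, the form $\mathbf{CAC}(\mathbb{R})$ fails in $\mathcal{M}1$. Using the implication $\mathbf{M}(TB,S)\rightarrow\mathbf{CAC}(\mathbb{R})$ of \cite{kyri} quoted before the proposition, it follows that $\mathbf{M}(TB,S)$ is false in $\mathcal{M}1$. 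Finally, the contrapositive of part (i) turns $\neg\mathbf{CAC}(\mathbb{R})$ into the statement that not every totally bounded metric space is strongly totally bounded, so $\mathcal{M}1$ contains a totally bounded metric space that is not strongly totally bounded. Part (iii) is then immediate: $\mathcal{M}1$ is a $\mathbf{ZF}$-model in which $\mathbf{M}(C,S)$ holds while $\mathbf{M}(TB,S)$ fails, whence $\mathbf{M}(C,S)$ does not imply $\mathbf{M}(TB,S)$.

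The main obstacle is twofold. In (i) I must take care that the verification of total boundedness of $\mathbf{X}$ stays inside $\mathbf{ZF}$: for each fixed $\varepsilon$ only finitely many components require attention and each needs only a finite choice, so no appeal to countable choice sneaks in; the heart of the argument is then the rigidity observation that a sufficiently fine net is forced to meet each $A_n$. In (ii) the obstacle is verificational rather than computational, namely pinning down that in $\mathcal{M}1$ one simultaneously has $\mathbf{BPI}$ true and $\mathbf{IDI}(\mathbb{R})$ false (equivalently, an infinite Dedekind-finite set of reals), which are exactly the two features of Cohen's basic model that drive the entire proposition.
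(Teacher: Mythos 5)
Your proposal is correct, and its logical skeleton for (ii) and (iii) is exactly the paper's: $\mathbf{BPI}$ holds in $\mathcal{M}1$, so $\mathbf{M}(C,S)$ follows from Theorem \ref{s2t7}(iii); $\mathbf{CAC}(\mathbb{R})$ fails in $\mathcal{M}1$; the implication $\mathbf{M}(TB,S)\rightarrow\mathbf{CAC}(\mathbb{R})$ from \cite{kyri} then kills $\mathbf{M}(TB,S)$; and the contrapositive of (i) yields the totally bounded space that is not strongly totally bounded, whence (iii). Where you genuinely diverge is precisely where the paper outsources. For (i) the paper gives no argument at all, citing \cite[Proposition 8]{kyri}, whereas you reconstruct it: reduce to pairwise disjoint bounded $A_n\subseteq(n,n+1)$ via uniformly definable homeomorphisms, build $\langle X,d\rangle$ by $(\ast)$ of Subsection \ref{s2.3}, verify total boundedness using only finitely many choices per fixed $\varepsilon$ (the tail of components lies in $B_d(\infty,\varepsilon)$), and observe that any finite $\frac{1}{m}$-net with $m>n$ is forced to meet $A_n$ because all points outside $A_n$ are at distance $\geq\frac{1}{n}>\frac{1}{m}$ from it; taking minima of the resulting non-empty finite subsets of $\mathbb{R}$ converts the multiple choice function into a choice function. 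This is sound and is essentially the argument behind the cited result. Likewise, for $\neg\mathbf{CAC}(\mathbb{R})$ in $\mathcal{M}1$ the paper cites \cite[p.\ 147]{hr} directly, while you derive it from the failure of $\mathbf{IDI}(\mathbb{R})$ (the infinite Dedekind-finite set of Cohen reals) plus the $\mathbf{ZF}$-implication $\mathbf{CAC}(\mathbb{R})\rightarrow\mathbf{IDI}(\mathbb{R})$; both routes are valid. What your version buys is self-containedness -- the proposition no longer rests on an external proof of (i) -- at the cost of length; the paper's version buys brevity by citation.
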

\begin{proof}
That (i) holds was proved in \cite{kyri}. It is known that $\mathbf{BPI}$ is true $\mathcal{M}1$ (see \cite[p. 147]{hr}). It follows from Theorem \ref{s2t7}(iii) that $\mathbf{M}(C, S)$ holds in $\mathcal{M}1$. On the other hand, it is known that $\mathbf{CAC}(\mathbb{R})$ fails in $\mathcal{M}1$ (see  \cite[p. 147]{hr}). Therefore, by (i), it holds in $\mathcal{M}1$ that there exists a totally bounded metric space which is not strongly totally bounded.
\end{proof}

We  recall that a topological space $\mathbf{X}$ is called \emph{limit point compact} if every infinite subset of $\mathbf{X}$ has an accumulation point in $\mathbf{X}$ (see, e.g., \cite{k}).

\begin{proposition} 
\label{s4p3}
$(\mathbf{ZFA})$ $\mathbf{WoAm}$ implies both $\mathbf{M}(TB, WO)$ and ``every limit point compact,  first-countable  $T_1$-space is well-orderable''. 
\end{proposition}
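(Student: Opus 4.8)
The plan is to derive both conclusions from one strategy: under \textbf{WoAm} it suffices to show that neither a totally bounded metric space nor a limit point compact, first-countable $T_1$-space can contain an amorphous subset, for then \textbf{WoAm} (which says every set is well-orderable or has an amorphous subset) immediately forces the underlying set to be well-orderable. The engine of both arguments is the following choice-free lemma, which I would isolate first: \emph{if $A$ is amorphous and $(K_n)_{n\in\omega}$ is a sequence of finite subsets of $A$, then $\bigcup_{n\in\omega}K_n$ is finite.} To prove it, suppose $W=\bigcup_n K_n$ is infinite and define $\nu\colon W\to\omega$ by $\nu(a)=\min\{n:a\in K_n\}$. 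Each fibre $\nu^{-1}(n)\subseteq K_n$ is finite, so the range $R=\{\nu(a):a\in W\}$ is an infinite subset of $\omega$; enumerating $R$ increasingly and splitting off its even- and odd-indexed members yields $R=R_0\sqcup R_1$ into two infinite pieces, whence $W_0=\nu^{-1}[R_0]$ and $W_1=\nu^{-1}[R_1]$ are disjoint infinite subsets of $A$. Then $A=(A\setminus W_1)\sqcup W_1$ partitions $A$ into two infinite sets, contradicting amorphousness. Since $\nu,R,R_0,R_1$ are all defined by explicit formulas, no appeal to choice is made.

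For $\mathbf{M}(TB,WO)$, let $\langle X,d\rangle$ be totally bounded and suppose $A\subseteq X$ is amorphous. For each real $\varepsilon>0$ I set $U_\varepsilon=\{a\in A:B_d(a,\varepsilon)\cap A\text{ is cofinite in }A\}$. Total boundedness supplies a finite $\frac{\varepsilon}{2}$-net in $X$, and disjointifying the corresponding balls partitions $A$ into finitely many sets of diameter $<\varepsilon$; as $A$ is amorphous, exactly one of these pieces is cofinite, and every point of that piece lies in $U_\varepsilon$, so $U_\varepsilon$ is cofinite. Moreover $\delta_d(U_\varepsilon)\le 2\varepsilon$, since for $a,b\in U_\varepsilon$ the cofinite sets $B_d(a,\varepsilon)\cap A$ and $B_d(b,\varepsilon)\cap A$ meet, producing a common point within $\varepsilon$ of each. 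Applying the lemma to $K_n=A\setminus U_{1/n}$ (each finite because $U_{1/n}$ is cofinite) shows that $V=\bigcap_n U_{1/n}$ is cofinite, hence infinite; but $V\subseteq U_{1/n}$ forces $\delta_d(V)\le\frac{2}{n}$ for every $n$, so $\delta_d(V)=0$ and $|V|\le 1$, a contradiction. Thus no amorphous $A$ exists, and \textbf{WoAm} makes $X$ well-orderable.

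For the topological statement, let $\mathbf{X}$ be limit point compact, first-countable and $T_1$, and suppose $A\subseteq X$ is amorphous. Limit point compactness provides an accumulation point $p$ of $A$, and first-countability a countable neighbourhood base at $p$, which I replace by a decreasing base $(V_n)_{n\in\omega}$ of open neighbourhoods by intersecting initial segments. In a $T_1$-space an accumulation point has infinitely many points of $A$ in every neighbourhood (one removes finitely many closed singletons), so each $A\cap V_n$ is infinite, hence cofinite, and $K_n=A\setminus V_n$ is finite; the $T_1$ axiom together with $(V_n)$ being a base also yields $\bigcap_n V_n=\{p\}$, because any $q\neq p$ is excluded from some $V_n$. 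The lemma then makes $\bigcup_n K_n=A\setminus\{p\}$ finite, contradicting the infinitude of $A$, so $X$ has no amorphous subset and is well-orderable by \textbf{WoAm}. The main obstacle throughout is keeping every construction free of the axiom of choice: the crux is the combinatorial lemma, whose proof converts the a priori non-constructive ``infinite union of finite sets'' into a canonical rank function with finite fibres; the only selections made are the single choices of $p$ and of a neighbourhood base, which are harmless.
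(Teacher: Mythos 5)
Your proof is correct, and it takes a route that differs from the paper's in both halves. The paper's proof of the metric half is shorter but leans on an external result: it quotes Lemma~1 of \cite{hdkr} (every metric on an amorphous set has finite range), takes $\varepsilon$ to be the least non-zero value of the restricted metric, and observes that all $\varepsilon$-balls in the amorphous subset are singletons, so no finite $\varepsilon$-net exists. You instead prove everything from scratch: your isolated combinatorial lemma (a countable union of finite subsets of an amorphous set is finite, via the canonical rank function $\nu$ and the even/odd splitting) powers both halves, and in the metric case you replace the finite-range lemma by the cofinite small-diameter sets $U_\varepsilon$, concluding that $\bigcap_n U_{1/n}$ is an infinite set of diameter $0$. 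For the topological half the paper argues in the opposite direction: it takes an \emph{arbitrary} infinite subset $C$, uses the accumulation point and a countable neighbourhood base to carve $C$ into infinitely many pairwise disjoint non-empty pieces $C\cap(U_{n_k}\setminus U_{n_{k+1}})$, and concludes directly that $C$ is not amorphous; you assume $A$ amorphous, use the finite/cofinite dichotomy to make each $A\setminus V_n$ finite, and invoke your lemma to make $A\setminus\{p\}$ finite. What the paper's approach buys is brevity and a slightly stronger structural statement (an explicit witness that no infinite subset is amorphous); what yours buys is self-containedness --- no appeal to \cite{hdkr} --- and a single unifying lemma that makes the two superficially different hypotheses (total boundedness, limit point compactness plus first countability plus $T_1$) play the same role, namely forcing a cofinite filter on the putative amorphous set to concentrate at a point. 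Both proofs are choice-free and valid in $\mathbf{ZFA}$, as required.
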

\begin{proof} 
Let us assume $\mathbf{WoAm}$. Consider an arbitrary metric space $\langle X, d\rangle$. Suppose that $X$ is not well-orderable. By $\mathbf{WoAm}$, there exists an amorphous subset $B$ of $X$. Let $\rho=d\upharpoonright B\times B$. Lemma 1 of \cite{hdkr} states that every metric on an amorphous set has a finite range. Therefore, the set $\ran(\rho)=\{\rho(x,y): x,y\in B\}$ is finite. Since $B$ is infinite, the set $\ran(\rho)\setminus\{0\}$ is non-empty. If $\varepsilon=\min(\ran(\rho)\setminus\{0\})$, then there does not exist a finite $\varepsilon$-net in $\langle B,\rho\rangle$ because $B$ is infinite and, for every $x\in B$, $B_{\rho}(x, \varepsilon)=\{x\}$. This implies that $\rho$ is not totally bounded. Hence $d$ is not totally bounded.

Now, suppose that $\mathbf{Y}=\langle Y, \tau\rangle$ is a first-countable, limit point compact $T_1$-space. Let $C$ be an infinite subset of $Y$. Since $\mathbf{Y}$ is limit point compact, the set $C$ has an accumulation point in $\mathbf{Y}$. Let $y_0$ be an accumulation point of $C$ and let $\{U_n: n\in\mathbb{N}\}$ be a base of neighborhoods of $y_0$ in $\mathbf{Y}$. Since $\mathbf{Y}$ is a $T_1$-space, we can inductively define an increasing sequence $(n_k)_{k\in\mathbb{N}}$ of natural numbers such that, for every $k\in\mathbb{N}$, $C\cap(U_{n_k}\setminus U_{n_{k+1}})\neq\emptyset$. This implies that $C$ is not amorphous. Hence, no infinite subset of $Y$ is amorphous, so $Y$ is well-orderable by $\mathbf{WoAm}$.
\end{proof}

\begin{corollary}
\label{s4c4} 
$\mathcal{N}1\models \mathbf{M}(TB, WO)$.  
\end{corollary}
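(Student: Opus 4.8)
The plan is to obtain the corollary directly from Proposition \ref{s4p3}. Since $\mathcal{N}1$ is a model of $\mathbf{ZFA}$ and Proposition \ref{s4p3} is a theorem of $\mathbf{ZFA}$, it suffices to verify that $\mathbf{WoAm}$ holds in $\mathcal{N}1$; the conclusion $\mathcal{N}1\models\mathbf{M}(TB,WO)$ is then immediate. So the whole content of the corollary lies in establishing $\mathbf{WoAm}$ in the Basic Fraenkel Model. One may alternatively quote the tables of \cite{hr}, where $\mathbf{WoAm}$ is Form $133$, as holding in $\mathcal{N}1$, but it is cleaner to give the short direct argument below, which also uses the amorphousness of $A$ recorded in Remark \ref{s2r14}.

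To prove $\mathbf{WoAm}$ in $\mathcal{N}1$, I would take an arbitrary $X\in\mathcal{N}1$ that is not well-orderable and exhibit an amorphous subset of $X$. Fix a finite support $E\in[A]^{<\omega}$ of $X$. By Proposition \ref{s2p17}, non-well-orderability of $X$ forces $\fix_{\mathcal{G}}(X)\notin\mathcal{F}$, so $\fix_{\mathcal{G}}(E)$ cannot fix $X$ pointwise; hence there is $y\in X$ with $\phi(y)\neq y$ for some $\phi\in\fix_{\mathcal{G}}(E)$. Using that in $\mathcal{N}1$ the supports of any element are closed under intersection, let $F'$ be the least support of $y$. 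Since $\fix_{\mathcal{G}}(E)$ moves $y$, the set $E$ is not a support of $y$, so $F'\setminus E\neq\emptyset$; fix $a\in F'\setminus E$ and put $F=F'\cup E$. Then $F$ supports $y$ and $E\subseteq F\setminus\{a\}$, while $F\setminus\{a\}$ fails to support $y$: indeed, if it did, intersecting with $F'$ would make $F'\setminus\{a\}$ a support of $y$, contradicting the minimality of $F'$.

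The amorphous subset is then the sub-orbit obtained by moving only the single essential atom $a$, namely
$$C=\{\phi(y):\phi\in\fix_{\mathcal{G}}(F\setminus\{a\})\}.$$
Because $E\subseteq F\setminus\{a\}$, every such $\phi$ fixes $X$, so $C\subseteq X$; moreover $C\in\mathcal{N}1$ with support $F\setminus\{a\}$. The key step, and the place where care is needed, is to show that $C$ is amorphous. Here I would first warn against the naive choice of the full orbit of $y$ under $\fix_{\mathcal{G}}(E)$: that orbit can behave like the set of $k$-element subsets of $A$ for $k>1$, which is \emph{not} amorphous, since it splits along a distinguished coordinate. Dropping exactly one essential atom avoids this. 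Concretely, the group $H=\fix_{\mathcal{G}}(F\setminus\{a\})$ acts on $D=\{a\}\cup(A\setminus F)$ as the full symmetric group $\Sym(D)$, and the assignment $g$ sending $\phi(a)$ to $\phi(y)$ is a well-defined, $H$-equivariant surjection from the cofinite set $D$ onto $C$ (well-definedness uses that $F$ supports $y$). The fibres of $g$ form an $H$-invariant equivalence relation on $D$; since $\Sym(D)$ is $2$-transitive, this relation is trivial, and as $|C|>1$ (because $F\setminus\{a\}$ is not a support of $y$) it must be the identity. Thus $g$ is a bijection lying in $\mathcal{N}1$, so $C$ is equipotent to the cofinite subset $D$ of the amorphous set $A$, and is therefore amorphous. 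This produces the required amorphous subset of $X$, completes the verification of $\mathbf{WoAm}$ in $\mathcal{N}1$, and, via Proposition \ref{s4p3}, yields $\mathcal{N}1\models\mathbf{M}(TB,WO)$. The main obstacle throughout is precisely the amorphousness of $C$: choosing the right sub-orbit and running the $2$-transitivity argument.
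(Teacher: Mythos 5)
Your proposal is correct and takes essentially the same route as the paper: the corollary is obtained from Proposition \ref{s4p3} together with the fact that $\mathbf{WoAm}$ holds in $\mathcal{N}1$. The only difference is that the paper simply cites this fact from \cite[p.\ 177]{hr}, whereas you reprove it inside $\mathcal{N}1$ via a least-support and orbit argument (which is sound: the $2$-transitivity step correctly rules out a nontrivial invariant fibre relation, giving a bijection between your sub-orbit $C$ and a cofinite, hence amorphous, subset of $A$).
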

\begin{proof}
This follows from Proposition \ref{s4p3} and the known fact that $\mathbf{WoAm}$ is true in $\mathcal{N}1$ (see p. 177 in \cite{hr}).
\end{proof}

To prove that $\mathbf{M}(TB, WO)$ does not imply $\mathbf{WoAm}$ in $\mathbf{ZFA}$, let us use the model $\mathcal{N}3$. 
In what follows, the notation concerning $\mathcal{N}3$ is the same as in Definition \ref{s2d15}. For $a,b\in A$ with $a<b$ (where $A$ is the set of atoms of $\mathcal{N}3$ and $\leq$ is the fixed linear order on $A$), we denote by $(a, b)$ the open interval in the linearly ordered set $\langle A, \leq)$; that is, $(a, b)=\{x\in A: a<x<b\}$. A proof to the following lemma can be found in \cite{hst}.

\begin{lemma}
\label{s4l5}
(Cf. \cite[Lemma 3.17 and its proof]{hst}.) If $X\in \mathcal N3$, $E$ is a support of $X$ and there is $x \in X$ for which $E$ is not a support, then there exist a subset $Y$ of $X$ and atoms $a, b \in A$ with $a < b$,  such that $Y\in \mathcal N3$, $E\cap (a, b)=\emptyset$ and, in $\mathcal N3$,  there exists a bijection $f:(a,b)\to Y$ having a support $E^{\prime}$ such that $E\cup\{a, b\}\subseteq E^{\prime}$ and $E^{\prime}\cap (a,b)=\emptyset$. 
\end{lemma}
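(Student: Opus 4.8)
The plan is to extract the required objects from a single ``movable'' atom in a minimal support of $x$, exploiting the order-homogeneity of $\langle A,\leq\rangle$, which is order isomorphic to $\langle\mathbb{Q},\leq\rangle$.

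First I would enlarge $E$ to a support of $x$. Since $x\in\mathcal{N}3$, it has some finite support $S$; let $E_x$ be a subset of $E\cup S$ that contains $E$, supports $x$, and is minimal (with respect to $\subseteq$) among such subsets. Because $E$ is not a support of $x$, we have $E\subsetneq E_x$, so I may fix an atom $c\in E_x\setminus E$. Let $a=\max\{e\in E_x\setminus\{c\}:e<c\}$ and $b=\min\{e\in E_x\setminus\{c\}:e>c\}$, with the convention that a missing maximum or minimum is replaced by an arbitrarily chosen atom of $A$ on the appropriate side of $c$. Then $a<c<b$, the interval $(a,b)$ is the gap of $E_x\setminus\{c\}$ that contains $c$, and in particular $(a,b)\cap(E_x\setminus\{c\})=\emptyset$; since $E\subseteq E_x\setminus\{c\}$ this already yields $E\cap(a,b)=\emptyset$. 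Put $E'=(E_x\setminus\{c\})\cup\{a,b\}$, so that $E\cup\{a,b\}\subseteq E'$ and $E'\cap(a,b)=\emptyset$.

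Next I would define the map. Write $H=\fix_{\mathcal{G}}(E_x\setminus\{c\})$. For every $y\in(a,b)$, homogeneity of $\mathbb{Q}$ provides some $\phi_y\in H$ with $\phi_y(c)=y$; I set $f(y)=\phi_y(x)$ and $Y=\ran(f)$. As $x$ has support $E_x$ and any two candidates for $\phi_y$ differ by an element of $\fix_{\mathcal{G}}(E_x)\subseteq\sym_{\mathcal{G}}(x)$, the value $f(y)$ is independent of the choice of $\phi_y$; thus $f$ is well-defined and satisfies the equivariance $f(\rho(y))=\rho(f(y))$ for all $\rho\in H$ and $y\in(a,b)$. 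Each $\phi_y$ fixes $E$, a support of $X$, so $\phi_y(x)\in\phi_y(X)=X$ and hence $Y\subseteq X$. Finally, any $\rho\in\fix_{\mathcal{G}}(E')$ lies in $H$ and fixes $a,b$, hence maps $(a,b)$ onto itself; using equivariance one sees $\rho(f)=f$, so $E'$ supports both $f$ and $Y$. Therefore $f,Y\in\mathcal{N}3$.

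The heart of the matter is the \emph{injectivity} of $f$, which is what makes it a bijection of $(a,b)$ onto $Y$. By the equivariance above, all fibres of $f$ are $H$-translates of the fibre over $x$, so $f$ is injective if and only if $f^{-1}(x)=\Orb_{H\cap\sym_{\mathcal{G}}(x)}(c)=\{c\}$; equivalently, every $h\in H$ with $h(x)=x$ must fix $c$. I would establish this by contradiction: suppose $h\in H$ fixes $x$ but sends $c$ to some $y\neq c$. Then $y\in(a,b)$, so $y\notin E_x$, and $h(E_x)=(E_x\setminus\{c\})\cup\{y\}$ is again a support of $x$ (it is the image under $h\in\sym_{\mathcal{G}}(x)$ of the support $E_x$). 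The two supports $E_x$ and $h(E_x)$ of $x$ meet exactly in $E_x\setminus\{c\}$, so the intersection-of-supports property of $\mathcal{N}3$ --- that the intersection of two supports of an element is again a support (equivalently, that every element of $\mathcal{N}3$ has a least support) --- forces $E_x\setminus\{c\}$ to support $x$, contradicting the minimality of $E_x$. This invocation of the intersection property, which rests on the special order structure of $\mathcal{N}3$ (it holds because, for finite $E_1,E_2\subseteq A$, $\fix_{\mathcal{G}}(E_1\cap E_2)$ is generated by $\fix_{\mathcal{G}}(E_1)\cup\fix_{\mathcal{G}}(E_2)$ in the order-automorphism group of $\mathbb{Q}$), is the main obstacle; everything else is routine bookkeeping with supports and the homogeneity of $\langle A,\leq\rangle$. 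Granting it, $f\colon(a,b)\to Y$ is the desired bijection, and all the stated conditions on $Y$, $a$, $b$, $E'$ hold, completing the proof.
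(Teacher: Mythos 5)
Your proof is correct and follows essentially the same route as the proof this paper relies on (the paper gives no proof of its own, deferring to \cite[Lemma 3.17]{hst}): enlarge $E$ to a minimal support $E_x$ of $x$, pick the extra atom $c$, use the homogeneity of $\langle A,\leq\rangle$ to define the orbit map $f(y)=\phi_y(x)$ on an interval $(a,b)$ around $c$ avoiding $E_x\setminus\{c\}$, and obtain injectivity from the fact that supports in $\mathcal{N}3$ are closed under intersection. The one ingredient you grant without proof --- the intersection-of-supports (least support) property of the Mostowski model --- is indeed a standard, true theorem about $\mathcal{N}3$ (proved, e.g., in Jech's \emph{The Axiom of Choice} via exactly the group-generation fact you cite), and your minor imprecisions (under your convention $(a,b)$ need not literally be a gap of $E_x\setminus\{c\}$, and in the injectivity argument $h(c)$ need not lie in $(a,b)$ --- what matters, and what holds, is only that $h(c)\notin E_x$) do not affect the correctness of the argument.
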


\begin{theorem}
\label{s4t6} 
$\mathcal{N}3\models \mathbf{M}(TB, WO)$.
\end{theorem}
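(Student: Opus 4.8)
The plan is to prove that in the Mostowski Linearly Ordered Model $\mathcal{N}3$, every totally bounded metric space is well-orderable. The strategy follows the same conceptual route as the proof of Proposition \ref{s4p3}: I will show that $\mathbf{M}(TB, WO)$ holds in $\mathcal{N}3$ by proving that any totally bounded metric space carried by a non-well-orderable set leads to a contradiction. The key structural fact to exploit is that in $\mathcal{N}3$, unlike in $\mathcal{N}1$, there are no amorphous sets; instead, non-well-orderable sets carry the linear order inherited from the rationals-like ordering on the atoms. So the argument of Proposition \ref{s4p3} does not transfer directly, and Lemma \ref{s4l5} must do the work that amorphousness did in $\mathcal{N}1$.

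\medskip

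First I would argue by contradiction: suppose $\mathbf{X}=\langle X, d\rangle\in\mathcal{N}3$ is a totally bounded metric space with $X$ not well-orderable. By Proposition \ref{s2p17}, $\fix_{\mathcal{G}}(X)\notin\mathcal{F}$, so $X$ has a support $E$ that fails to be a support of some element $x\in X$. Lemma \ref{s4l5} then furnishes a subset $Y\subseteq X$ with $Y\in\mathcal{N}3$, atoms $a<b$ with $E\cap(a,b)=\emptyset$, and a bijection $f:(a,b)\to Y$ in $\mathcal{N}3$ whose support $E'$ satisfies $E\cup\{a,b\}\subseteq E'$ and $E'\cap(a,b)=\emptyset$. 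The idea is that the open interval $(a,b)$ in the atom order is order-dense and admits many order-automorphisms fixing $E'$ pointwise, so the bijection $f$ transports a rich symmetry onto $Y$. I would then consider the metric $\rho=d\upharpoonright Y\times Y$ restricted to $Y$ and study how the symmetries in $\fix_{\mathcal{G}}(E')$ act on distances.

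\medskip

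The heart of the argument is to show that total boundedness is incompatible with this symmetry. Any $\phi\in\fix_{\mathcal{G}}(E')$ is an order-automorphism of $\langle A,\leq\rangle$ fixing $E'$; via $f$ it induces a permutation of $Y$ that is an isometry of $\langle Y,\rho\rangle$, since $d$ itself is fixed by $\phi$. I would show that for distinct points $y_1=f(c_1)$ and $y_2=f(c_2)$ of $Y$ (with $c_1,c_2\in(a,b)$), the value $\rho(y_1,y_2)$ depends only on the relative order type of $c_1,c_2$ with respect to the finite set $E'\cap$-relevant cut data inside $(a,b)$, because an order-automorphism fixing $E'$ can carry any order-isomorphic configuration in $(a,b)$ to another. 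Since $(a,b)$ is densely ordered and the only ``invariants'' available are finitely many cut positions, this forces $\rho$ to take only finitely many values on $Y$; in particular, there is a positive minimum distance $\varepsilon=\min(\ran(\rho)\setminus\{0\})$ between distinct points. But $Y$ is infinite (being the bijective image of the infinite interval $(a,b)$), so for this $\varepsilon$ each ball $B_{\rho}(y,\varepsilon)=\{y\}$ is a singleton and no finite $\varepsilon$-net exists. Hence $\langle Y,\rho\rangle$ is not totally bounded, and since $Y$ is a subspace of $\mathbf{X}$ this contradicts the total boundedness of $d$.

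\medskip

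The main obstacle, which I would treat carefully, is establishing rigorously that $\rho$ has finite range—equivalently, that distances between points of $Y$ are determined by the finite order-theoretic data of $E'$ within $(a,b)$. This requires a genuine homogeneity argument: given any two pairs $(c_1,c_2)$ and $(c_1',c_2')$ in $(a,b)$ realizing the same cut type over $E'\cap(a,b)$ (which is empty, so over the endpoints), one exhibits an order-automorphism $\phi\in\fix_{\mathcal{G}}(E')$ with $\phi(c_i)=c_i'$, using the order-density and homogeneity of the rational-like order restricted to $(a,b)$; then applying $\phi$ and the $f$-equivariance yields $\rho(f(c_1),f(c_2))=\rho(f(c_1'),f(c_2'))$. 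Once finiteness of the range is secured, the remainder is the same short computation as in Proposition \ref{s4p3}. Since no infinite subspace of $\mathbf{X}$ can then be totally bounded, we conclude $X$ must be well-orderable, establishing $\mathcal{N}3\models\mathbf{M}(TB, WO)$.
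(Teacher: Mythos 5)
Your proposal is correct and takes essentially the same route as the paper's own proof: Proposition~\ref{s2p17} plus Lemma~\ref{s4l5} produce the interval $(a,b)$ and the bijection $f$, the homogeneity of order-automorphisms fixing $E'$ (which, since $E'\cap(a,b)=\emptyset$, may be taken to be the identity off $(a,b)$) forces the transported metric to have finite range, and the finite-range argument from Proposition~\ref{s4p3} then rules out total boundedness. The only detail to make explicit is that $E$ must be chosen at the outset as a support of \emph{both} $X$ and $d$ (as the paper does), so that every $\phi\in\fix_{\mathcal{G}}(E')$ indeed fixes $d$ and hence acts by isometries.
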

\begin{proof}
We use the notation from Definition \ref{s2d15}. Suppose that $\langle X, d\rangle$ is a metric space in $\mathcal{N}3$ such that $X$ is not well-orderable in $\mathcal{N}3$. Then $X$ is infinite. Let $E\in [A]^{<\omega}$ be a support of both $X$ and $d$. By Proposition \ref{s2p17}, there exists $x\in X$ such that $E$ is not a support of $x$. 

By Lemma \ref{s4l5}, there exist $a,b\in A$ with $a<b$ and  $(a,b)\cap E=\emptyset$, such that there exists in $\mathcal{N}3$ an injection $f:(a, b)\to X$ which has a support $E^{\prime}$ such that $E\cup\{a,b\}\subseteq E^{\prime}$ and $E^{\prime}\cap (a, b)=\emptyset$. We put $B=(a, b)$ and $\rho(x,y)=d(f(x), f(y))$ for all $x,y\in B$.  Let us notice that $\rho\in\mathcal{N}3$ because $E^{\prime}$ is also a support of $\rho$.  We prove that $\ran({\rho})=\{\rho(x,y): x,y\in B\}$ is a two-element set. To this aim, we fix $b_1, b_2\in B$ with $b_1<b_2$ and put $r=\rho(b_1, b_2)$. Let $u,v\in B$ and $u\neq v$. To show that $\rho(u,v)=r$, we must consider several cases regarding the ordering of the elements $b_1, b_2, u, v$. We consider only one of the possible cases since all the other cases can be treated in much the same way as the chosen one. So, assume, for example, that $b_2<u<v$. Let $\phi$ be an order-automorphism of $\langle A, \leq\rangle$ such that $\phi(b_1)=u$, $\phi(b_2)=v$, and $\phi$ is the identity mapping on $A\setminus B$. Then $\phi\in\text{fix}_{\mathcal{G}}(E^{\prime})$, so $\phi(\rho)=\rho$. This implies that $\phi(r)=\rho(\phi(b_1), \phi(b_2))$.  Since, in addition $\phi(r)=r$, we have $r=\rho(b_1,b_2)=\rho(\phi(b_1), \phi(b_2))=\rho(u,v)$. Therefore, $\ran(\rho)=\{0, r\}$. Since the range of $\rho$ is finite, in much the same way, as in the proof to Proposition \ref{s4p3}, we deduce that $\langle B, \rho\rangle$ is not totally bounded. Hence $d$ is not totally bounded. 
\end{proof}

\begin{corollary}
\label{s4c7} 
$\mathbf{M}(TB, WO)$ does not imply $\mathbf{WoAm}$ in $\mathbf{ZFA}$.
\end{corollary}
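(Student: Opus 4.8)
The plan is to establish Corollary \ref{s4c7} as an immediate consequence of the two facts already assembled in the excerpt: that $\mathbf{M}(TB, WO)$ holds in the Mostowski Linearly Ordered Model $\mathcal{N}3$ (Theorem \ref{s4t6}), and that $\mathbf{WoAm}$ fails in $\mathcal{N}3$. Since $\mathcal{N}3$ is a model of $\mathbf{ZFA}$, exhibiting a single $\mathbf{ZFA}$-model in which $\mathbf{M}(TB, WO)$ is true but $\mathbf{WoAm}$ is false suffices to show that $\mathbf{M}(TB, WO)$ does not imply $\mathbf{WoAm}$ over $\mathbf{ZFA}$.

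First I would invoke Theorem \ref{s4t6}, which gives directly that $\mathcal{N}3 \models \mathbf{M}(TB, WO)$. Then I would argue that $\mathbf{WoAm}$ fails in $\mathcal{N}3$. Recall that $\mathbf{WoAm}$ asserts that every set is either well-orderable or has an amorphous subset. By Remark \ref{s2r16}, the power set of the set $A$ of all atoms is Dedekind-finite in $\mathcal{N}3$, and in particular $A$ itself is not well-orderable in $\mathcal{N}3$ (otherwise $\mathbf{IDI}$ would hold, contradicting the failure of $\mathbf{IDI}$ noted there). To refute $\mathbf{WoAm}$ it therefore remains to check that $A$, being non-well-orderable, has no amorphous subset in $\mathcal{N}3$. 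This is where the linear order structure of $\mathcal{N}3$ does the work: the fixed dense linear ordering $\leq$ on $A$ (order-isomorphic to the rationals) restricts to a linear ordering on every infinite subset $B \subseteq A$ lying in $\mathcal{N}3$, and such a $B$ can always be split into two infinite pieces using the order — for instance by choosing, via a support, an interval cut that separates infinitely many atoms on each side. Hence no infinite subset of $A$ is amorphous, so $A$ witnesses the failure of $\mathbf{WoAm}$.

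The main obstacle, and the only step requiring genuine care, is verifying that every infinite subset of $A$ in $\mathcal{N}3$ admits a partition into two infinite sets within $\mathcal{N}3$. The subtlety is that the splitting must itself be a member of the model, so one cannot simply quote the order-density abstractly but must produce a definable cut whose symmetry group contains $\fix_{\mathcal{G}}(E)$ for a suitable finite support $E$. Concretely, given $B \in \mathcal{N}3$ with support $E$, one picks an atom $c$ not in $E$ lying strictly between two atoms of $E$ (or uses $\pm\infty$ cuts at the ends) and sets $B_1 = \{x \in B : x < c\}$ and $B_2 = \{x \in B : x > c\}$; because $\leq$ is dense and $B$ is infinite while $E$ is finite, at least one of the intervals determined by the gaps of $E$ contains infinitely many atoms of $B$, and a further cut inside that interval yields two infinite pieces, each with support $E \cup \{c\}$, hence each in $\mathcal{N}3$.

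Since this amounts to a standard property of $\mathcal{N}3$ already recorded in the literature cited in the excerpt, the proof is short: the corollary follows by combining $\mathcal{N}3 \models \mathbf{M}(TB, WO)$ with $\mathcal{N}3 \models \neg\mathbf{WoAm}$.
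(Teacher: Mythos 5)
Your overall strategy is exactly the paper's: exhibit $\mathcal{N}3$ as a $\mathbf{ZFA}$-model of $\mathbf{M}(TB, WO)\wedge\neg\mathbf{WoAm}$, quoting Theorem \ref{s4t6} for the first conjunct. The paper handles the second conjunct by citation (the failure of $\mathbf{WoAm}$ in $\mathcal{N}3$ is known; see p.~183 of \cite{hr}), and your closing paragraph correctly notes that this route is available. However, your self-contained argument for $\neg\mathbf{WoAm}$ has a genuine gap in the splitting step. You argue that some interval $I$ determined by the gaps of the support $E$ contains infinitely many atoms of $B$, and that ``a further cut inside that interval yields two infinite pieces'' because $\leq$ is dense. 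That inference is false under just those hypotheses: a subset of a dense order can meet an interval in an infinite set of order type $\omega$ (think of $\{1/n : n\in\mathbb{N}\}$ inside $(0,2)\subseteq\mathbb{Q}$), and for such a trace every cut $c$ leaves only finitely many of its points on one side, so no choice of $c$ produces two infinite pieces. Density of the ambient order says nothing about how $B$ sits inside it. What rescues the argument --- and what is missing --- is the orbit/homogeneity argument: if $B\in\mathcal{N}3$ has support $E$ and contains even one atom of an open interval $I$ determined by $E$, then $B\supseteq I$, because for any $x,y\in I$ there is an order-automorphism in $\fix_{\mathcal{G}}(E)$ carrying $x$ to $y$ and fixing everything outside $I$ (each such $I$ is itself order-isomorphic to $\mathbb{Q}$). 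Once $B$ is known to contain a whole interval $I$, any atom $c\in I$ splits $I$ --- hence $B$ --- into two infinite pieces by density, and the two pieces lie in $\mathcal{N}3$ with support $E\cup\{c\}$ as you say. Without this step, the ``thin'' sets your argument must rule out are precisely the ones it cannot.

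A smaller point: your parenthetical reason that $A$ is not well-orderable (``otherwise $\mathbf{IDI}$ would hold'') is a non sequitur, since the well-orderability of one particular set does not yield $\mathbf{IDI}$. The correct deduction from Remark \ref{s2r16} is: if the infinite set $A$ were well-orderable, it would be Dedekind-infinite, and then an injection $n\mapsto\{a_n\}$ would make $\mathcal{P}(A)$ Dedekind-infinite, contradicting the Dedekind-finiteness of $\mathcal{P}(A)$ stated in that remark.
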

\begin{proof}
It is known that $\mathbf{WoAm}$ is false in $\mathcal{N}3$ (see \cite[p.183]{hr}). Therefore, the conjunction $\mathbf{M}(TB, WO)\wedge\neg\mathbf{WoAm}$ is true in $\mathcal{N}3$ by Theorem \ref{s4t6}.
\end{proof}

In contrast to Corollary \ref{s4c4} and Theorem \ref{s4t6}, we have the following proposition:

\begin{proposition}
\label{s4t8}
$\mathcal{N}_{cr}\models \neg\mathbf{M}(C, WO)$. 
\end{proposition}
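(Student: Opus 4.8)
The goal is to show that in the concentric circles permutation model $\mathcal{N}_{cr}$ there is a compact metrizable space whose underlying set is not well-orderable, witnessing $\neg\mathbf{M}(C,WO)$. The natural candidate is the space $\langle X,\tau(d)\rangle$ built from the family $\mathcal{A}=\{A_n:n\in\mathbb{N}\}$ of concentric circles via the metric $(\ast)$ from Subsection \ref{s2.3}, where $X=A\cup\{\infty\}$ and $A=\bigcup_{n\in\mathbb{N}}A_n$ is the set of all atoms. The plan is first to verify that this space lies in $\mathcal{N}_{cr}$ and is compact metrizable, and then to argue that $X$ fails to be well-orderable in $\mathcal{N}_{cr}$.

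First I would specify the metric. On each circle $A_n=\{a_{n,x}:x\in S(0,\tfrac1n)\}$ one takes $\rho_n$ to be (the pullback of) the Euclidean metric $\rho_e$ on the circle $S(0,\tfrac1n)$, i.e. $\rho_n(a_{n,x},a_{n,y})=\rho_e(x,y)$; this $\rho_n$ is $\mathcal{G}$-invariant since rotations are isometries, so each $\rho_n$, the sequence $(\rho_n)_{n\in\mathbb{N}}$, and hence the metric $d$ defined by $(\ast)$ all have empty support and belong to $\mathcal{N}_{cr}$. By Proposition \ref{s2p8}(i), $d$ is a metric on $X$; each $\langle A_n,\tau(\rho_n)\rangle$ is a (compact) circle, so by Proposition \ref{s2p8}(ii) the space $\mathbf{X}=\langle X,\tau(d)\rangle$ is compact. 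It is metrizable by construction, and plainly $\mathbf{X}\in\mathcal{N}_{cr}$. So $\mathbf{X}$ is an infinite compact metrizable space living in the model.

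Next I would show that $X$ is not well-orderable in $\mathcal{N}_{cr}$. By Proposition \ref{s2p17}, $X$ is well-orderable in $\mathcal{N}_{cr}$ iff $\fix_{\mathcal{G}}(X)\in\mathcal{F}$, equivalently iff $X$ has a finite support $E\in\mathcal{I}=[A]^{<\omega}$ with $\fix_{\mathcal{G}}(E)\subseteq\sym_{\mathcal{G}}(X)$ that actually well-orders $X$; more usefully, I would argue directly that no finite $E$ supports a well-ordering. Suppose toward a contradiction that $\preceq\,\in\mathcal{N}_{cr}$ is a well-ordering of $X$ with finite support $E$. Choose $n$ large enough that $A_n\cap E=\emptyset$ (possible since $E$ is finite and the $A_n$ are disjoint and infinitely many). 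Pick the $\preceq$-least element of the infinite set $A_n$, say $a_{n,x_0}$. Because $E\cap A_n=\emptyset$, there is a rotation $\phi\in\mathcal{G}$ fixing every $A_m$ pointwise for $m\neq n$, fixing $E$, and moving $a_{n,x_0}$ to a different point $a_{n,y_0}\in A_n$ with $y_0\neq x_0$. Such $\phi$ lies in $\fix_{\mathcal{G}}(E)$, hence $\phi(\preceq)=\preceq$; but then $\phi(a_{n,x_0})=a_{n,y_0}$ must be the $\preceq$-least element of $\phi(A_n)=A_n$, contradicting that $a_{n,x_0}\neq a_{n,y_0}$ is already the least. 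This shows no finite set supports a choice of least element on $A_n$, hence $X$ has no well-ordering in $\mathcal{N}_{cr}$.

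Alternatively, and perhaps more cleanly, I would reduce to the already-recorded property of $\mathcal{N}_{cr}$: by Remark \ref{s2r12}, the family $\{A_n:n\in\mathbb{N}\}$ has no multiple choice function in $\mathcal{N}_{cr}$. If $X$ were well-orderable in $\mathcal{N}_{cr}$, then each $A_n\subseteq X$ would inherit a well-ordering uniformly, and selecting the least element of each $A_n$ would yield a choice function—hence a fortiori a multiple choice function—for $\{A_n:n\in\mathbb{N}\}$, contradicting Remark \ref{s2r12}. The main obstacle is the bookkeeping in the direct support argument (ensuring the rotation $\phi$ can be chosen inside $\mathcal{G}$ with the stated fixing and moving behavior, which uses that each $A_n$ is a full orbit of rotations of the circle and that $E$ is finite); the reduction to the non-existence of a multiple choice function sidesteps this and is the route I would ultimately present, since that fact is already available from \cite{kerta}. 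Either way one concludes $\mathcal{N}_{cr}\models\neg\mathbf{M}(C,WO)$.
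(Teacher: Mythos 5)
Your proposal is correct and follows essentially the same route as the paper: the paper's proof simply cites \cite{kerta} for the existence in $\mathcal{N}_{cr}$ of a compact metric space that is not weakly Loeb---which is exactly the space $X=A\cup\{\infty\}$ you construct, its failure of weak Loebness being witnessed by the fact (Remark \ref{s2r12}) that $\{A_n:n\in\mathbb{N}\}$ has no multiple choice function---and then observes that a well-orderable space would be (weakly) Loeb. Your preferred reduction to Remark \ref{s2r12} is this same deduction with the construction and membership-in-the-model details made explicit, and your direct rotation/support argument is a sound, self-contained alternative to invoking the citation, but it is not a different method.
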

\begin{proof}
It was shown in \cite[the proof to Theorem 2.5]{kerta} that, in $\mathcal{N}_{cr}$, there exists a compact metric space $\mathbf{X}=\langle X, d\rangle$ which is not weakly Loeb. Then $X$ cannot be well-orderable in $\mathcal{N}_{cr}$.
\end{proof}

\begin{remark}
\label{s4r9}
In \cite{et}, a symmetric  model $\mathcal{M}$ of $\mathbf{ZF}$ was constructed such that, in $\mathcal{M}$, there exists a compact metric space $\langle X, d\rangle$ which is not weakly Loeb; thus, $\mathbf{M}(C,WO)$ fails in $\mathcal{M}$. 
\end{remark}

It is obvious that $\mathbf{IDI}$ implies $\mathbf{ICMDI}$ in $\mathbf{ZF}$; however, it seems to be still an open problem of whether  this implication is not reversible in $\mathbf{ZF}$. To solve this problem, first of all, let us notice that the following corollary follows directly from  Corollary \ref{s4c4} and Theorem \ref{s4t6}:  

\begin{corollary}
\label{s4c10}
\begin{enumerate}
\item[(i)] $\mathcal{N}1\models (\mathbf{ICMDI}\wedge\neg\mathbf{IDI})$.
\item[(ii)] $\mathcal{N}3\models (\mathbf{BPI}\wedge\mathbf{ICMDI}\wedge\neg\mathbf{IDI})$.
\end{enumerate}
\end{corollary}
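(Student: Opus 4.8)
The plan is to read off both parts of the corollary as immediate consequences of the two model-specific computations of $\mathbf{M}(TB, WO)$ established just above, combined with the implication chain of Theorem \ref{s4t2}(ii) and the already-recorded choice-theoretic properties of $\mathcal{N}1$ and $\mathcal{N}3$. No fresh permutation-model argument is required; the work is entirely in assembling three ingredients: (a) that each of $\mathcal{N}1$ and $\mathcal{N}3$ satisfies $\mathbf{M}(TB, WO)$; (b) that $\mathbf{M}(TB, WO)$ propagates to $\mathbf{ICMDI}$; and (c) that $\mathbf{IDI}$ is known to fail in both models, while $\mathbf{BPI}$ is known to hold in $\mathcal{N}3$.

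For part (i), I would first invoke Corollary \ref{s4c4} to get $\mathcal{N}1\models\mathbf{M}(TB, WO)$. Since Theorem \ref{s4t2}(ii) supplies the provable chain $\mathbf{M}(TB, WO)\rightarrow\mathbf{M}(C, WO)\rightarrow\mathbf{M}(C, S)\rightarrow\mathbf{ICMDI}$, which holds equally in $\mathbf{ZFA}$, it follows that $\mathcal{N}1\models\mathbf{ICMDI}$. Finally, by Remark \ref{s2r14} the set $A$ of all atoms of $\mathcal{N}1$ is amorphous, hence an infinite Dedekind-finite set, so $\mathcal{N}1\models\neg\mathbf{IDI}$. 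Conjoining the last two statements yields $\mathcal{N}1\models\mathbf{ICMDI}\wedge\neg\mathbf{IDI}$.

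For part (ii) the argument runs in parallel. I would start from Theorem \ref{s4t6}, which gives $\mathcal{N}3\models\mathbf{M}(TB, WO)$, and again apply the chain of Theorem \ref{s4t2}(ii) to conclude $\mathcal{N}3\models\mathbf{ICMDI}$. For the remaining conjuncts I would appeal to Remark \ref{s2r16}: there $\mathbf{BPI}$ is true in $\mathcal{N}3$, while the power set $\mathcal{P}(A)$ of the set of atoms is Dedekind-finite and infinite, so it witnesses $\neg\mathbf{IDI}$. Combining these gives $\mathcal{N}3\models\mathbf{BPI}\wedge\mathbf{ICMDI}\wedge\neg\mathbf{IDI}$.

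I expect no genuine obstacle once the preceding results are in hand; the corollary is essentially bookkeeping, as the phrase ``follows directly'' in the surrounding text signals. The only point deserving a moment's care is the justification of $\neg\mathbf{IDI}$: one must observe that in each model the relevant witness (the amorphous $A$ in $\mathcal{N}1$, the Dedekind-finite $\mathcal{P}(A)$ in $\mathcal{N}3$) is simultaneously infinite and Dedekind-finite, which is exactly what the cited remarks encode when they assert that ``$\mathbf{IDI}$ fails''. All the substantive content has already been discharged in Corollary \ref{s4c4}, Theorem \ref{s4t6}, and the established properties of these two Fraenkel--Mostowski models.
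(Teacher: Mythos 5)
Your proposal is correct and matches the paper's intended derivation exactly: the paper states that Corollary \ref{s4c10} ``follows directly from Corollary \ref{s4c4} and Theorem \ref{s4t6}'', i.e.\ precisely from $\mathbf{M}(TB, WO)$ holding in $\mathcal{N}1$ and $\mathcal{N}3$, pushed through the chain of Theorem \ref{s4t2}(ii) to obtain $\mathbf{ICMDI}$, with $\neg\mathbf{IDI}$ and $\mathbf{BPI}$ supplied by the known facts recorded in Remarks \ref{s2r14} and \ref{s2r16}. You have simply written out the bookkeeping the paper leaves implicit, including the correct observation that the relevant implications are available in $\mathbf{ZFA}$.
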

 To transfer $\mathbf{BPI}\wedge\mathbf{ICMDI}\wedge\neg\mathbf{IDI}$ to a model of $\mathbf{ZF}$, let us prove the following lemma:
 
\begin{lemma}
\label{s4l11}
$\mathbf{ICMDI}$ is injectively boundable. 
\end{lemma}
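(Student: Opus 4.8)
The plan is to show that $\mathbf{ICMDI}$ can be rephrased as a statement asserting the existence of an object whose ``size'' is controlled by the injective cardinality of the parameters, so that it fits the syntactic template of an injectively boundable formula from \cite[Note 103]{hr}. Recall that $\mathbf{ICMDI}$ reads: every infinite compact metrizable space is Dedekind-infinite, i.e., for every set $X$ carrying a metric $d$ whose induced topology is compact, if $X$ is infinite then there is an injection $g:\omega\to X$. The strategy is to isolate the existential witness $g$ and bound it injectively in terms of the data $\langle X,d\rangle$.

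First I would write $\mathbf{ICMDI}$ in prenex form with the leading universal quantifier ranging over pairs $\langle X,d\rangle$ and the matrix asserting ``if $d$ is a metric on $X$ inducing a compact topology and $X$ is infinite, then there exists an injection $g:\omega\to X$.'' The clause ``$d$ is a metric on $X$ inducing a compact topology'' is expressible by a boundable formula in the parameter $\langle X,d\rangle$: compactness of a metrizable space is equivalent (in $\mathbf{ZF}$) to the statement that every sequence has a convergent subsequence being captured by quantification over subsets of $X\times X$ and over $\omega$-indexed objects, all of which live inside $\mathcal{P}(\TC(\{\langle X,d\rangle\}))$ at a fixed finite iteration of power set. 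The key point is that the bound on the existential witness $g$ is injective: since $g$ is an injection of $\omega$ into $X$, its range is a denumerable subset of $X$, so $\aleph_0\le|X|_{-}$, and $g$ itself can be coded as a subset of $\omega\times X$, hence as an element of a set built from $\TC(\{\langle X,d\rangle\})$ by boundedly many applications of power set and product.

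The crux is to verify the bounding condition in the precise sense required by the definition of injective boundability: one must exhibit a term $t$ such that the witness can be taken to lie in a set of injective cardinality at most $t(|X|_{-})$, uniformly. Here the witnessing injection $g:\omega\to X$ has range contained in $X$, so its injective cardinality is at most $|X|_{-}$, and as a function it is an element of ${}^{\omega}X\subseteq\mathcal{P}(\omega\times X)$; thus the existential quantifier can be relativized to a set whose injective cardinality is bounded by a fixed expression in $|X|_{-}$ and $\aleph_0$. This is exactly the shape demanded by the notion of an injectively boundable formula, so I expect $\mathbf{ICMDI}$ to qualify once the relativization is spelled out.

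The main obstacle will be the bookkeeping of checking that the hypothesis predicate---``$\langle X,d\rangle$ is an infinite compact metrizable space''---is itself boundable, since compactness is a priori a statement about open covers quantifying over $\mathcal{P}(\mathcal{P}(X))$. I would circumvent this by replacing open-cover compactness with an equivalent $\mathbf{ZF}$-formulation local to the metric, for instance sequential-type or filter-type compactness phrased via quantifiers over ${}^{\omega}X$ and over the countably many balls $B_d(x,\tfrac{1}{2^n})$, all of which sit at a bounded level above $\TC(\{\langle X,d\rangle\})$; this keeps the hypothesis boundable rather than merely definable. Once both the hypothesis and the bound on the witness are seen to depend only injectively on $|X|_{-}$, the formula matches the template and the lemma follows. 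I would then remark that this is precisely what is needed to feed $\mathbf{BPI}\wedge\mathbf{ICMDI}\wedge\neg\mathbf{IDI}$ into the Pincus Transfer Theorem (Theorem \ref{pinth}), since $\neg\mathbf{IDI}$ and $\mathbf{BPI}$ are already handled there.
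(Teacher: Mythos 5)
Your argument rests on a misreading of what ``injectively boundable'' means. In Pincus's definition (see \cite{pin} and \cite[Note 103]{hr}), a statement is injectively boundable if it is equivalent to a conjunction of statements of the form $(\forall x)(|x|_{-}\leq\aleph_{\alpha}\rightarrow\phi(x))$, where the \emph{leading universal quantifier} is restricted to sets of injective cardinality below a fixed aleph and $\phi$ is boundable; it is not a condition asking for a bound on existential witnesses in terms of $|X|_{-}$, which is what your ``crux'' paragraph tries to verify. What you actually check --- that the witness $g$ can be coded inside $\mathcal{P}(\omega\times X)$ and that the hypothesis is expressible with quantifiers bounded by iterated power sets of $\TC(\{\langle X,d\rangle\})$ --- shows only that $\mathbf{ICMDI}$ has the shape $(\forall x)\,\phi(x)$ with $\phi$ boundable and with \emph{no} injective-cardinality restriction on $x$. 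That shape is strictly weaker than injective boundability and does not support transfer. Concretely, $\mathbf{LW}$ has exactly the same shape (the witnessing well-ordering lies in $\mathcal{P}(x\times x)$, so the matrix is bounded); $\mathbf{LW}\wedge\neg\mathbf{ICMDI}$ has a permutation model by Theorem \ref{s4t23}(i), and $\neg\mathbf{ICMDI}$ is boundable; hence, if $\mathbf{LW}$ were injectively boundable, Theorem \ref{pinth} would produce a $\mathbf{ZF}$-model of $\mathbf{LW}\wedge\neg\mathbf{ICMDI}$, which is impossible since in $\mathbf{ZF}$ the statement $\mathbf{LW}$ implies $\mathbf{AC}$ (Remark \ref{s1r15}) and $\mathbf{AC}$ implies $\mathbf{ICMDI}$. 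So bounded witnesses alone prove nothing, and your proposal as written does not establish the lemma.

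The missing idea is to restrict the quantifier over spaces to sets of injective cardinality at most $\omega$ and to show that this restriction is harmless. The paper does this via the contrapositive: $\mathbf{ICMDI}$ says that every Dedekind-finite compact metrizable space is finite, and a Dedekind-finite set $x$ automatically satisfies $|x|_{-}\leq\omega$; hence $\mathbf{ICMDI}$ is equivalent to $(\forall x)(|x|_{-}\leq\omega\rightarrow(\forall\rho\in\mathcal{P}(x\times x\times\mathbb{R}))[(x \text{ Dedekind-finite}\wedge\rho\text{ a metric on } x \text{ making it compact})\rightarrow x\text{ finite}])$, which is visibly of the required form. Your positive formulation could be repaired in the same spirit: if $|x|_{-}>\omega$, then $\omega_{1}$, and in particular $\omega$, injects into $x$, so the conclusion of $\mathbf{ICMDI}$ holds vacuously and the leading quantifier may be restricted to $|x|_{-}\leq\omega$ --- but this observation, not the bounding of $g$, is the entire content of the lemma, and it appears nowhere in your proposal. (Your remark that the existence of $g$ gives $\aleph_{0}\leq|X|_{-}$ points in the wrong direction and is too weak anyway: an infinite Dedekind-finite set also has $|X|_{-}=\omega$.) Finally, your replacement of open-cover compactness by a sequential formulation is both unnecessary --- an open cover is an element of $\mathcal{P}(\mathcal{P}(X))$, so the open-cover formulation is already bounded --- and incorrect as stated, since the equivalence of compactness and sequential compactness for metric spaces is not a theorem of $\mathbf{ZF}$ (cf. \cite{k}).
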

\begin{proof}  First, we put
$$\Phi(x)=\neg(\exists y)(y\subseteq x\wedge |y|=\omega),$$
and 
$$\Psi(x)=\text{``$x$ is finite''}.$$
Note that the formula $\Psi(x)$ is boundable (see \cite{pin} or  Note 103, p. 284 in \cite{hr}).  Now it is not hard to verify that $\mathbf{ICMDI}$ is logically equivalent to the statement $\Omega$, where
\begin{multline*}\Omega=(\forall x)(|x|_{-}\leq\omega\rightarrow(\forall \rho\in\mathcal{P}(x\times x\times\mathbb{R}))\\
[(\Phi(x)\wedge(\rho\text{ is a metric on $x$ such that $\langle X, \rho\rangle$ is compact}))
\rightarrow\Psi(x)])\end{multline*}
Since $\Omega$ is obviously injectively boundable, so is $\mathbf{ICMDI}$.
\end{proof}

\begin{theorem}
\label{s4t12}
The conjunction $\mathbf{BPI}\wedge\mathbf{ICMDI}\wedge\neg\mathbf{IDI}$ has a $\mathbf{ZF}$-model.
\end{theorem}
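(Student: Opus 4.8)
The plan is to invoke the Pincus Transfer Theorem (Theorem \ref{pinth}). Recall that this theorem requires the target statement to be a conjunction whose conjuncts are each either injectively boundable or equal to $\mathbf{BPI}$, and that it produces a $\mathbf{ZF}$-model from any permutation model. So the entire argument reduces to checking two things: that $\mathbf{BPI}\wedge\mathbf{ICMDI}\wedge\neg\mathbf{IDI}$ has a permutation model, and that this conjunction satisfies the syntactic hypotheses of the Pincus Transfer Theorem.

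For the permutation model, I would simply invoke Corollary \ref{s4c10}(ii), which already records that $\mathcal{N}3\models(\mathbf{BPI}\wedge\mathbf{ICMDI}\wedge\neg\mathbf{IDI})$. Thus the Mostowski Linearly Ordered Model $\mathcal{N}3$ is the required permutation model, and no new model-building is needed here.

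For the syntactic hypothesis, I would decompose the conjunction into its three conjuncts and handle them separately. The conjunct $\mathbf{BPI}$ is explicitly permitted as one of the allowed forms in the statement of the Pincus Transfer Theorem. The conjunct $\mathbf{ICMDI}$ is injectively boundable by Lemma \ref{s4l11}, which was proved precisely for this purpose. The only remaining point is $\neg\mathbf{IDI}$: I would observe that $\neg\mathbf{IDI}$ asserts the existence of an infinite Dedekind-finite set, which is equivalent to the existence of a set $x$ with $\Phi(x)$ (no denumerable subset) that is also infinite, i.e. $|x|_{-}\leq\omega$ together with $\Phi(x)$ and infinitude; this is a boundable (hence injectively boundable) existential statement, so it too falls under the hypotheses of Theorem \ref{pinth}.

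Having verified that all three conjuncts are of the permitted type and that $\mathcal{N}3$ is a permutation model of their conjunction, the Pincus Transfer Theorem immediately yields a $\mathbf{ZF}$-model of $\mathbf{BPI}\wedge\mathbf{ICMDI}\wedge\neg\mathbf{IDI}$. The only genuinely delicate step is confirming that $\neg\mathbf{IDI}$ meets the injective-boundability requirement; once the boundable/injectively boundable bookkeeping for that conjunct is settled (the other two being handled by the explicit $\mathbf{BPI}$ clause and by Lemma \ref{s4l11}), the theorem follows with essentially no further work. I would therefore expect the main obstacle to be nothing more than carefully citing \cite[Note 103]{hr} to justify the boundability of the existential statement witnessing $\neg\mathbf{IDI}$.
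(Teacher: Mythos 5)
Your proposal is correct and follows essentially the same route as the paper's own proof: both combine Corollary \ref{s4c10}(ii) (the permutation model $\mathcal{N}3$), Lemma \ref{s4l11} (injective boundability of $\mathbf{ICMDI}$), the fact that $\neg\mathbf{IDI}$ is boundable and hence injectively boundable (which the paper simply cites from \cite[2A5, p. 772]{pin} and \cite[p. 285]{hr} rather than sketching), and the Pincus Transfer Theorem. One small caution: your parenthetical gloss ``i.e. $|x|_{-}\leq\omega$ together with $\Phi(x)$ and infinitude'' conflates boundability (a syntactic rank-bounding condition on the formula witnessing the existential) with the injective-cardinality hypothesis that appears in the definition of \emph{injectively} boundable statements, so for a clean write-up you should, as the paper does, just cite the known boundability of $\neg\mathbf{IDI}$ directly.
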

\begin{proof}
It is known that $\neg\mathbf{IDI}$ is boundable, and hence injectively boundable (see \cite[2A5, p. 772]{pin} or \cite[p. 285]{hr}).  By Theorem \ref{pinth}, if a conjunction of $\mathbf{BPI}$ with injectively boundable statements has a Fraenkel-Mostowski model, then it has a $\mathbf{ZF}$-model. This, together with Corollary \ref{s4c10}(ii) and Lemma \ref{s4l11}, completes the proof.
\end{proof}

\begin{theorem} 
\label{s4t13} 
The conjunction $(\neg\mathbf{BPI})\wedge\mathbf{ICMDI}\wedge\neg\mathbf{IDI}$ has a $\mathbf{ZF}$-model.
\end{theorem}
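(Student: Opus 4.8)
The plan is to mirror the proof of Theorem \ref{s4t12}, working with $\mathcal{N}1$ in place of $\mathcal{N}3$, but with an essential twist. The version of the Pincus Transfer Theorem at our disposal (Theorem \ref{pinth}) transfers conjunctions of injectively boundable statements together with $\mathbf{BPI}$ \emph{itself}; it says nothing about transferring $\neg\mathbf{BPI}$. The key idea, and the main obstacle, is therefore to avoid transferring $\neg\mathbf{BPI}$ directly and instead to transfer an injectively boundable statement that \emph{entails} $\neg\mathbf{BPI}$ in $\mathbf{ZF}$. The natural candidate is the statement $\mathbf{A}$ asserting ``there exists an amorphous set'', since an amorphous set is simultaneously a witness to $\neg\mathbf{IDI}$ and an obstruction to $\mathbf{BPI}$.

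First I would assemble the permutation-model side. By Corollary \ref{s4c10}(i) we have $\mathcal{N}1\models\mathbf{ICMDI}\wedge\neg\mathbf{IDI}$, and by Remark \ref{s2r14} the set of all atoms of $\mathcal{N}1$ is amorphous, so $\mathcal{N}1\models\mathbf{ICMDI}\wedge\mathbf{A}$. Next I would check, arguing as in Lemma \ref{s4l11}, that $\mathbf{A}$ is injectively boundable. The crucial numerical point is that if $x$ is amorphous then every infinite subset of $x$ is again amorphous, hence non-well-orderable, so the only von Neumann cardinals equipotent to a subset of $x$ are the finite ones; thus $|x|_{-}=\omega$. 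Consequently $\mathbf{A}$ is equivalent to $(\exists x)(|x|_{-}\leq\omega\wedge x\text{ is amorphous})$, and the property ``$x$ is amorphous'', which reads $(\forall y\in\mathcal{P}(x))(y\text{ is finite}\vee x\setminus y\text{ is finite})$ together with ``$x$ is infinite'', is boundable once the injective cardinality of $x$ is bounded. Hence $\mathbf{ICMDI}\wedge\mathbf{A}$ is a conjunction of injectively boundable statements, so by Theorem \ref{pinth} it has a $\mathbf{ZF}$-model $N$.

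Finally I would argue, entirely in $\mathbf{ZF}$, that the existence of an amorphous set forces both $\neg\mathbf{IDI}$ and $\neg\mathbf{BPI}$, which gives the conclusion in $N$. Let $X$ be amorphous in $N$. Since a denumerable subset of $X$ would split $X$ into two infinite pieces, $X$ has no denumerable subset, so $X$ is an infinite Dedekind-finite set and $\neg\mathbf{IDI}$ holds. For $\neg\mathbf{BPI}$, I would use the standard $\mathbf{ZF}$-implication that $\mathbf{BPI}$ entails the Ordering Principle, reducing matters to showing that an amorphous set cannot be linearly ordered. If $<$ were a linear order on $X$, then for each $a\in X$ exactly one of $\{y:y<a\}$, $\{y:y>a\}$ is finite; setting $g(a)=0$ in the first case and $g(a)=1$ in the second yields a partition of $X$ into a downward-closed set $g^{-1}(0)$ and an upward-closed set $g^{-1}(1)$, one of which is finite by amorphousness. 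The other is then an infinite subset of $X$ all of whose proper initial (respectively final) segments are finite, hence is well-ordered of type $\omega$ (respectively $\omega^{\ast}$) and therefore denumerable, contradicting that $X$ has no denumerable subset. Thus $N\models(\neg\mathbf{BPI})\wedge\mathbf{ICMDI}\wedge\neg\mathbf{IDI}$.

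I expect the conceptual step, not the computations, to be where the real work lies: recognizing that $\neg\mathbf{BPI}$ must be produced as a \emph{theorem} of the target model from the transferred statement $\mathbf{A}$, rather than transferred as a clause in its own right. The two supporting verifications, namely the injective boundability of $\mathbf{A}$ (via $|x|_{-}=\omega$ for amorphous $x$) and the implication ``amorphous set exists $\Rightarrow\neg\mathbf{BPI}$'', both follow established patterns, the former paralleling Lemma \ref{s4l11} and the latter being a routine order-theoretic argument.
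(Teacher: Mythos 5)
Your proposal is correct, but it takes a genuinely different route from the paper's, and the difference stems from a premise that is in fact mistaken. The paper's own proof is a one-step application of Theorem \ref{pinth}: it observes that $\neg\mathbf{BPI}$, $\mathbf{ICMDI}$ and $\neg\mathbf{IDI}$ are \emph{all} injectively boundable, notes that their conjunction holds in $\mathcal{N}1$ (Corollary \ref{s4c10}(i) together with the fact that $\mathbf{BPI}$ fails in $\mathcal{N}1$), and transfers the whole conjunction at once. The obstacle you saw is not really there: $\neg\mathbf{BPI}$ is the statement ``there exists a Boolean algebra with no prime ideal'', an existential sentence whose matrix has all quantifiers bounded by iterated power sets of the transitive closure of the witness; it is therefore boundable, hence injectively boundable --- the same fact (cited in the proof of Theorem \ref{s4t12} to \cite[2A5]{pin} and \cite[p.~285]{hr}) that handles $\neg\mathbf{IDI}$. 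So $\neg\mathbf{BPI}$ enters Theorem \ref{pinth} under the injectively boundable clause, not as the designated $\mathbf{BPI}$ exception. Your workaround is nonetheless sound at every step: $\mathcal{N}1\models\mathbf{ICMDI}\wedge\mathbf{A}$ by Corollary \ref{s4c10}(i) and Remark \ref{s2r14}; $\mathbf{A}$ is boundable, hence injectively boundable, exactly as $\neg\mathbf{IDI}$ is (your computation $|x|_{-}=\omega$ for amorphous $x$ is correct, though one can simply invoke boundability); and the two $\mathbf{ZF}$-implications you prove inside the transferred model --- an amorphous set is infinite Dedekind-finite, and an amorphous set cannot be linearly ordered, whence $\mathbf{BPI}$ fails via the standard implication from $\mathbf{BPI}$ to the Ordering Principle --- are both correct, the latter by the argument that an infinite linear order all of whose points have finite initial or finite final segments embeds $\omega$ or its reverse, contradicting Dedekind-finiteness. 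What your route buys is independence from any boundability analysis of $\neg\mathbf{BPI}$: you transfer only statements whose witnesses automatically have injective cardinality $\leq\aleph_0$, and you obtain the slightly stronger conclusion that $\mathbf{ICMDI}$ is consistent with $\mathbf{ZF}$ plus the existence of an amorphous set. The cost is length and the appeal to the additional $\mathbf{ZF}$-theorem $\mathbf{BPI}\rightarrow$ Ordering Principle, which the paper's direct transfer avoids.
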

\begin{proof} Let $\mathbf{\Phi}$ be the conjunction $(\neg \mathbf{BPI})\wedge\mathbf{ICMDI}\wedge\neg\mathbf{IDI}$. It is known that $\mathbf{BPI}$ is false in $\mathcal{N}1$ (see \cite[p. 177]{hr}). Hence $\mathbf{\Phi}$ has a permutation model by Corollary \ref{s4c10}(i). Since the statements $\neg \mathbf{BPI}$, $\mathbf{ICMDI}$ and $\neg\mathbf{IDI}$ are all injectively boundable, $\mathbf{\Phi}$ has a $\mathbf{ZF}$-model by Theorem \ref{pinth}.
\end{proof}

\begin{corollary}
\label{s4c14}
$\mathbf{ICMDI}$ does not imply $\mathbf{IDI}$ in $\mathbf{ZF}$. Furthermore, $\mathbf{BPI}$ is independent of $\mathbf{ZF}+\mathbf{ICMDI}+\neg\mathbf{IDI}$. 
\end{corollary}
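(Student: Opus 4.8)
The plan is to read off both assertions directly from the two preceding transfer theorems, Theorem \ref{s4t12} and Theorem \ref{s4t13}, so that essentially no new work is required beyond recording what independence and non-implication mean at the level of models.

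For the first assertion, that $\mathbf{ICMDI}$ does not imply $\mathbf{IDI}$ in $\mathbf{ZF}$, I would simply invoke Theorem \ref{s4t12} (equally, Theorem \ref{s4t13} would serve). That theorem furnishes a $\mathbf{ZF}$-model in which $\mathbf{BPI}\wedge\mathbf{ICMDI}\wedge\neg\mathbf{IDI}$ holds; in particular this model satisfies $\mathbf{ICMDI}$ while $\mathbf{IDI}$ fails. A single $\mathbf{ZF}$-model of $\mathbf{ICMDI}\wedge\neg\mathbf{IDI}$ is exactly what is needed to refute the implication $\mathbf{ICMDI}\rightarrow\mathbf{IDI}$ over $\mathbf{ZF}$.

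For the second assertion, that $\mathbf{BPI}$ is independent of $\mathbf{ZF}+\mathbf{ICMDI}+\neg\mathbf{IDI}$, I would combine the two theorems to exhibit models on both sides. Theorem \ref{s4t12} gives a $\mathbf{ZF}$-model of $\mathbf{ICMDI}+\neg\mathbf{IDI}$ in which $\mathbf{BPI}$ is true, so $\mathbf{ZF}+\mathbf{ICMDI}+\neg\mathbf{IDI}$ cannot prove $\neg\mathbf{BPI}$. Dually, Theorem \ref{s4t13} gives a $\mathbf{ZF}$-model of $\mathbf{ICMDI}+\neg\mathbf{IDI}$ in which $\mathbf{BPI}$ is false, so $\mathbf{ZF}+\mathbf{ICMDI}+\neg\mathbf{IDI}$ cannot prove $\mathbf{BPI}$ either. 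Since neither $\mathbf{BPI}$ nor its negation is a consequence of $\mathbf{ZF}+\mathbf{ICMDI}+\neg\mathbf{IDI}$, the statement $\mathbf{BPI}$ is independent of that theory, as claimed.

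There is no genuine obstacle at this stage: the entire difficulty was already absorbed into Theorems \ref{s4t12} and \ref{s4t13}, whose proofs rest on the permutation-model computations of Corollary \ref{s4c10} together with the injective boundability of $\mathbf{ICMDI}$ (Lemma \ref{s4l11}) and of $\neg\mathbf{IDI}$, and on the Pincus Transfer Theorem (Theorem \ref{pinth}). The only care required here is the bookkeeping of the logical meaning of ``independent'', namely that one must produce models realizing both $\mathbf{BPI}$ and $\neg\mathbf{BPI}$ over the base theory, which the two theorems supply respectively.
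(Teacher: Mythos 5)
Your proposal is correct and matches the paper's intended argument exactly: the corollary is stated in the paper without a separate proof precisely because it follows immediately from Theorems \ref{s4t12} and \ref{s4t13} in the way you describe. Your only addition is to spell out the bookkeeping (one model refutes $\mathbf{ICMDI}\rightarrow\mathbf{IDI}$; the two models together, one satisfying $\mathbf{BPI}$ and one satisfying $\neg\mathbf{BPI}$, establish independence over $\mathbf{ZF}+\mathbf{ICMDI}+\neg\mathbf{IDI}$), which is exactly what the paper leaves implicit.
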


\begin{proposition}
\label{s4p15} 
$\mathbf{ICMDI}$ implies $\mathbf{CAC}_{fin}$ in $\mathbf{ZF}$.
\end{proposition}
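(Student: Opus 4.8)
The plan is to build from a given family of finite sets a compact metrizable space via the construction of Subsection \ref{s2.3}, apply $\mathbf{ICMDI}$ to make it Dedekind-infinite, and then harvest a partial choice function from the resulting injection. By Remark \ref{s1r10}(i), $\mathbf{CAC}_{fin}$ is equivalent to the assertion that every denumerable family of non-empty finite sets has a \emph{partial} choice function, so it suffices to produce one. Accordingly, I would enumerate the given family as $\{A_n:n\in\mathbb{N}\}$ and first disjointify it by setting $B_n=A_n\times\{n\}$; the $B_n$ are pairwise disjoint, non-empty and finite, and any partial choice function of $\{B_n:n\in\mathbb{N}\}$ projects onto one of $\{A_n:n\in\mathbb{N}\}$, so I may work with the disjoint family.

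Next I would assemble the space. Equip each $B_n$ with the discrete $0$--$1$ metric $\rho_n$, so that $\langle B_n,\tau(\rho_n)\rangle$ is a finite, hence compact, discrete space. Put $B=\bigcup_{n\in\mathbb{N}}B_n$, let $X=B\cup\{\infty\}$ with $\infty\notin B$, and let $d$ be the metric on $X$ defined by $(\ast)$. Then Proposition \ref{s2p8}(i) guarantees that $d$ is a metric, and Proposition \ref{s2p8}(ii) guarantees that $\langle X,\tau(d)\rangle$ is compact; being a metric space, it is metrizable. It is moreover infinite: a finite set cannot contain denumerably many pairwise disjoint non-empty subsets, so $B$, and hence $X$, is infinite. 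This is the only place where I use that the original family is genuinely infinite.

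Now I would invoke the hypothesis. Since $\langle X,\tau(d)\rangle$ is an infinite compact metrizable space, $\mathbf{ICMDI}$ yields an injection $g\colon\omega\to X$. Its range $g[\omega]$ is denumerable; deleting the at most one point equal to $\infty$ leaves an infinite set $D\subseteq B$. The crucial point is that $g$ \emph{canonically well-orders} $D$ (order each $x\in D$ by its unique preimage $g^{-1}(x)\in\omega$), so no further choice is needed to make selections from $D$. Let $I=\{n\in\mathbb{N}:D\cap B_n\neq\emptyset\}$. If $I$ were finite, then $D$ would be contained in the finite union $\bigcup_{n\in I}B_n$ of finite sets and hence finite, contradicting that $D$ is infinite; thus $I$ is infinite. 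For each $n\in I$, the set $D\cap B_n$ is a non-empty finite subset of $B_n$, and I define $f(n)$ to be the element of $D\cap B_n$ with least $g$-preimage. Then $f$ is a partial choice function of $\{B_n:n\in\mathbb{N}\}$, and its projection is a partial choice function of $\{A_n:n\in\mathbb{N}\}$, as required.

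The main obstacle is the final extraction step, namely converting Dedekind-infiniteness into a partial choice function. The delicate observations are that a single injection $g$ must simultaneously spread infinitely many points across infinitely many of the finite blocks $B_n$ (because no finitely many finite blocks can absorb an infinite set) and, at the same time, supply a well-ordering of those points allowing an effective, choice-free selection. By contrast, the verification that $\langle X,\tau(d)\rangle$ is an infinite compact metrizable space is routine once Proposition \ref{s2p8} is in hand.
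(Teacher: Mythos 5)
Your proof is correct, but it takes a different route from the paper's. The paper disposes of this proposition in a single line: it cites Corollary 2.2(iii) of \cite{kert}, which states that if every infinite compact metrizable space has an infinite well-orderable subset, then $\mathbf{CAC}_{fin}$ holds; since the range of an injection $g\colon\omega\to X$ supplied by $\mathbf{ICMDI}$ is exactly such a subset, the conclusion is immediate. What you have done is, in effect, reconstruct the argument hidden inside that citation, using only tools internal to the paper: the disjointification $B_n=A_n\times\{n\}$, the metric $(\ast)$ of Subsection \ref{s2.3} together with Proposition \ref{s2p8} to produce an infinite compact metric space, the equivalence of $\mathbf{CAC}_{fin}$ with its partial-choice form from Remark \ref{s1r10}(i), and the key choice-free extraction: the injection canonically well-orders its range, infinitely many blocks $B_n$ must meet that range (finitely many finite blocks cannot absorb an infinite set), and selecting in each such block the point of least $g$-preimage is definable from $g$ alone. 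Each of these steps is sound in $\mathbf{ZF}$. The trade-off is the obvious one: the paper's citation is shorter but opaque, whereas your argument is self-contained, stays entirely within the paper's own machinery, and makes transparent precisely where Dedekind-infiniteness gets converted into a partial choice function.
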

\begin{proof}
It suffices to apply Corollary 2.2 (iii) in \cite{kert} which states that if every infinite compact metrizable space has an infinite well-orderable subset, then $\mathbf{CAC}_{fin}$ holds. 
\end{proof}

At this moment, it is unknown whether there is a model of $\mathbf{ZF}$ in which the conjunction $\mathbf{CAC}_{fin}\wedge\neg\mathbf{ICMDI}$ is false.

\begin{remark}
\label{s4r17}
In Cohen's original model $\mathcal{M}1$ in \cite{hr}, $\mathbf{CUC}$ holds and there exists a dense Dedekind-finite subset $X$ of the interval [0, 1] of $\mathbb{R}$. The metric space $\mathbf{X}=\langle X, d\rangle$, where $d(x,y)=|x-y|$ for all $x,y\in X$, is totally bounded but $X$ is not well-orderable in $\mathcal{M}1$. It was remarked in \cite{kyri} that, since $\langle X, d\rangle$ is not separable, $\mathbf{CUC}$ does not imply $\mathbf{M}(TB, S)$ in $\mathbf{ZF}$. Now, it is easily seen that $\mathbf{CUC}$ does not imply $\mathbf{M}(TB, WO)$ in $\mathbf{ZF}$.
\end{remark}

It is not stated in \cite{hr} nor in \cite{hr1} that $\mathbf{WoAm}$ implies $\mathbf{CUC}$. Since we have not seen a solution of the problem of whether this implication is true in other sources, let us notice that it follows from the following proposition that this implication holds in $\mathbf{ZF}$:

\begin{proposition}
\label{s4p18}
\begin{enumerate}
\item[(i)] $(\mathbf{ZFA})$ $\mathbf{WoAm}\rightarrow \mathbf{CUC}$;
\item[(ii)] If $\mathcal{N}$ is a model of $\mathbf{ZFA}$ in which $\mathbb{R}$ is well-orderable (in particular, if $\mathcal{N}$ is a permutation model), then:
 $$\mathcal{N}\models (\mathbf{CAC}_{WO}\rightarrow \mathbf{CUC}).$$ 
 \item[(iii)] If $\mathcal{N}$ is a permutation model, then: 
 $$\mathcal{N}\models (\mathbf{AC}_{fin}\rightarrow\mathbf{CUC}).$$
\end{enumerate}
\end{proposition}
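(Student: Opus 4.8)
The plan is to isolate statement (ii) as the engine and then obtain (i) and (iii) from it. For (ii), assume $\mathbf{WO}(\mathbb{R})$ and $\mathbf{CAC}_{WO}$, and let $\{X_{n}:n\in\omega\}$ be a family of countable sets with union $X$. First I would note that, since $\mathbb{R}$ is well-orderable and $|{}^{\omega}\omega|=|\mathbb{R}|$, the set ${}^{\omega}\omega$ is well-orderable; as each $X_{n}$ is countable there is an injection $X_{n}\hookrightarrow\omega$, whence ${}^{X_{n}}\omega$ embeds into ${}^{\omega}\omega$ and is therefore well-orderable. Hence the set $I_{n}$ of all injections from $X_{n}$ into $\omega$ is a non-empty well-orderable set, and $\{I_{n}:n\in\omega\}$ is a denumerable family of non-empty well-orderable sets. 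Applying $\mathbf{CAC}_{WO}$ gives injections $g_{n}:X_{n}\to\omega$, and then $x\mapsto\langle g_{m(x)}(x),m(x)\rangle$, where $m(x)=\min\{n:x\in X_{n}\}$, is an injection of $X$ into $\omega\times\omega$; so $X$ is countable and $\mathbf{CUC}$ holds.

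For (i), the key is that $\mathbf{WoAm}$ already yields both hypotheses of (ii). I would first check that $\mathbb{R}$ has no amorphous subset: an amorphous set admits no surjection onto an infinite well-orderable set (splitting the image into two infinite pieces would split the domain), so every map of it into $\omega$ has finite range; but a bounded infinite $B\subseteq\mathbb{R}$ has an accumulation point $p$ (Bolzano--Weierstrass is available in $\mathbf{ZF}$), and $x\mapsto\min\{k\in\mathbb{N}:|x-p|>\frac{1}{k}\}$ then has infinite range on $B$, a contradiction (unboundedness is excluded similarly via $x\mapsto\lfloor x\rfloor$). Thus $\mathbf{WoAm}$ forces $\mathbf{WO}(\mathbb{R})$. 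Next, for $\mathbf{CAC}_{WO}$, given non-empty well-orderable sets $\{Y_{n}:n\in\omega\}$ I would apply $\mathbf{WoAm}$ to $Y=\bigcup_{n}(Y_{n}\times\{n\})$: an amorphous subset $B\subseteq Y$ would project under $\langle y,n\rangle\mapsto n$ to a finite subset of $\omega$ (by the same image-finiteness), so $B$ would lie in a finite union of well-orderable sets and hence be well-orderable --- impossible; therefore $Y$ is well-orderable, and choosing the least element of each fibre gives a choice function. Now (i) follows from (ii).

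For (iii), recall that in every permutation model the power set of a pure set is well-orderable (a consequence of Proposition \ref{s2p17}), so $\mathbf{WO}(\mathbb{R})$ holds automatically; by (ii) it suffices to derive $\mathbf{CAC}_{WO}$ from $\mathbf{AC}_{fin}$ inside $\mathcal{N}$. Let $\{Y_{n}:n\in\omega\}\in\mathcal{N}$ be non-empty and well-orderable with support $E$. Since each $Y_{n}$ is well-orderable, by Proposition \ref{s2p17} all of its elements share one common finite support, which bounds the minimal support-data of its elements; consequently the set $P_{n}$ of minimal ``extra-support patterns'' beyond $E$ realised in $Y_{n}$ is a non-empty \emph{finite} set, and $\langle P_{n}\rangle_{n}\in\mathcal{N}$. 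I would then apply $\mathbf{AC}_{fin}$ to $\{P_{n}:n\in\omega\}$ to obtain a sequence $\langle S_{n}\rangle_{n}\in\mathcal{N}$ with a common finite support $D\supseteq E$. The point of this step is that $\fix_{\mathcal{G}}(D)$ now fixes each $S_{n}$ setwise, so it only permutes the finitely many atoms of $S_{n}$ among themselves; hence $\fix_{\mathcal{G}}(D)$ acts on $Y_{n}'=\{y\in Y_{n}:E\cup S_{n}\text{ supports }y\}$ with \emph{finite} orbits. Each such orbit is fixed setwise by $\fix_{\mathcal{G}}(D)$, so the set of all these orbits has every element fixed by $\fix_{\mathcal{G}}(D)$ and is therefore well-orderable in $\mathcal{N}$; fixing a well-ordering, I let $O_{n}$ be the least orbit inside $Y_{n}'$, obtaining a sequence $\langle O_{n}\rangle_{n}\in\mathcal{N}$ of non-empty finite subsets of the $Y_{n}$. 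A final application of $\mathbf{AC}_{fin}$ to $\{O_{n}:n\in\omega\}$ selects $y_{n}\in O_{n}\subseteq Y_{n}$, which is the required choice function; thus $\mathbf{CAC}_{WO}$ holds and (iii) follows from (ii).

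The main obstacle is the support bookkeeping in (iii): one must verify that well-orderability of $Y_{n}$ genuinely forces the pattern-sets $P_{n}$ to be finite (this is where the normal-ideal/support structure is used, and is cleanest when minimal supports are unique), and that each object produced --- the sequences $\langle P_{n}\rangle_{n}$, $\langle S_{n}\rangle_{n}$, $\langle Y_{n}'\rangle_{n}$ and $\langle O_{n}\rangle_{n}$ --- is hereditarily symmetric with the claimed support, so that the two appeals to $\mathbf{AC}_{fin}$ genuinely take place inside $\mathcal{N}$. By contrast, (ii) is routine, and in (i) the only delicate point is the elementary fact, used twice, that an amorphous set maps onto no infinite well-orderable set.
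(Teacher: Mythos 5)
Your parts (i) and (ii) are correct. Your (ii) is essentially the paper's argument (the paper reduces to denumerable $A_n$ and chooses bijections $\omega\to A_n$ rather than injections into $\omega$; the difference is cosmetic). For (i) you take a genuinely different and more modular route: you first show that $\mathbf{WoAm}$ yields both $\mathbf{WO}(\mathbb{R})$ (no subset of $\mathbb{R}$ is amorphous) and $\mathbf{CAC}_{WO}$ (applying $\mathbf{WoAm}$ to $\bigcup_{n}(Y_n\times\{n\})$), and then invoke (ii); the paper instead argues directly on the set $B=\bigcup_n B_n$ of bijections, applying $\mathbf{WoAm}$ to $B$ and using the fact that an amorphous set cannot be equipotent to a subset of $\mathbb{R}$. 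Both are valid; yours isolates reusable lemmas, the paper's avoids passing through $\mathbf{WO}(\mathbb{R})$.

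Part (iii), however, has a genuine gap, exactly at the step you flag as ``support bookkeeping''. The paper does not reprove this implication: it quotes a nontrivial theorem of Howard (\cite{how1}, Note 2 of \cite{hr}) that $\mathbf{AC}_{fin}$ implies $\mathbf{AC}_{WO}$ in every permutation model, and then applies (ii). Your direct argument needs the sets $P_n$ of ``minimal extra-support patterns realised in $Y_n$'' to be \emph{finite}, since otherwise your first appeal to $\mathbf{AC}_{fin}$ is illegitimate; and your justification --- that a common finite support of all elements of $Y_n$ ``bounds the minimal support-data'' --- rests on a false principle. A minimal support need not be contained in a given support: in the second Fraenkel model the atom $a_0$ has the two incomparable minimal supports $\{a_0\}$ and $\{b_0\}$. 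Worse, a single element can have \emph{infinitely many} minimal supports: in the permutation model whose atoms are the nonzero vectors of $\bigoplus_{i\in\omega}\mathbb{F}_2$ and whose group is the group of linear automorphisms, the supports of the atom $e_1$ are exactly the finite sets of atoms whose span contains $e_1$, so its minimal supports are $\{e_1\}$ and $\{e_1+e_k,\,e_k\}$ for every $k\geq 2$. These particular models violate $\mathbf{AC}_{fin}$, so they do not refute the implication itself; but they do refute your lemma and show that any proof of the finiteness of the $P_n$ must already exploit the hypothesis $\mathbf{AC}_{fin}$ --- which is precisely the hard content packaged in Howard's theorem. (A secondary issue: your argument presupposes the normal-ideal, finite-supports presentation, whereas statement (iii) and Proposition \ref{s2p17} concern arbitrary permutation models determined by normal filters.) I note that the remainder of your orbit argument is sound: the finiteness of the $\fix_{\mathcal{G}}(D)$-orbits on $Y_n'$ via the homomorphism into $\Sym(S_n)$, the well-orderability in $\mathcal{N}$ of the set of orbits, and the second use of $\mathbf{AC}_{fin}$ all check out; indeed, if finiteness of $P_n$ were available you could skip your first use of $\mathbf{AC}_{fin}$ entirely by taking the canonical set $S_n=\bigcup P_n$. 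As it stands, though, the proof of (iii) does not go through; the efficient repair is to cite Howard's result, as the paper does.
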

\begin{proof}
Let $\mathcal{A}=\{ A_n : n\in\omega\}$ be a disjoint family of non-empty countable sets and let $A=\bigcup\limits_{n\in\omega}A_n$. Clearly, if $\bigcup\limits_{n\in\omega}(A_n\times\omega)$ is countable, then $A$ is countable. Therefore, to show that $A$ is countable, we may assume that, for every $n\in\omega$, the set $A_n$ is denumerable.  For every $n\in\omega$, let $B_n$ be the set of all bijections from $\omega$ onto $A_n$. Since $|\omega^{\omega}|=|\mathbb{R}^{\omega}|=|\mathbb{R}|$ and the sets $A_n$ are all denumerable, for every $n\in\omega$,  the set $B_n$ is equipotent to $\mathbb{R}$.
 
 (i)  Let $B=\bigcup\limits_{n\in\omega}B_n$. If $B$ is well-orderable, then there exists a sequence $(f_n)_{n\in\omega}$ of bijections $f_n:\omega\to A_n$, so  $A$ is countable. Suppose that $B$ is not well-orderable. Then it follows from $\mathbf{WoAm}$ that there exists an amorphous subset $C$ of $B$. Since $C$ cannot be partitioned into two infinite subsets,  the set $\{n\in\omega: C\cap B_n\neq\emptyset\}$ is finite. This implies that there exists $m\in\omega$ such that $C\subseteq\bigcup\limits_{n\in m+1} B_n$, so $C$ is equipotent to a subset of $\mathbb{R}$. But this is impossible because $\mathbb{R}$ does not have amorphous subsets. The contradiction obtained completes the proof to (i).\smallskip
 
 (ii) Now, assume that $\mathbb{R}$ is well-orderable and $\mathbf{CAC}_{WO}$ holds.  Then the sets $B_n$, being equipotent to $\mathbb{R}$, are all well-orderable. Hence, it follows from $\mathbf{CAC}_{WO}$ that there exists $f\in\prod\limits_{n\in\omega} B_n$. Then we have a sequence $(f(n))_{n\in\omega}$ of bijections $f(n):\omega\to A_n$, so $A$ is countable. 
 
 (iii) Since $\mathbf{AC}_{fin}$ implies $\mathbf{AC}_{WO}$ in every permutation model (cf. \cite{how1} and Note 2 in \cite{hr}), we infer that if $\mathbf{AC}_{fin}$ is satisfied in $\mathcal{N}$ and $\mathcal{N}$ is a permutation model, then  $\mathbf{CAC}_{WO}$ holds in $\mathcal{N}$. This, taken together with (ii),  implies (iii). 
\end{proof}

\begin{remark}
\label{s4r19}
($a$) In Felgner's Model I (labeled as $\mathcal{M}20$ in \cite{hr}), $\mathbf{AC}_{WO}$ holds and $\mathbf{CUC}$ fails (cf. \cite[p. 159]{hr}). We recall that $\mathbf{AC}_{fin}\wedge\neg\mathbf{AC}_{WO}$ is true in Sageev's Model I (labeled as model $\mathcal{M}6$ in \cite{hr}).  Moreover, since $\mathbf{IDI}\wedge\neg\mathbf{CUC}$ is true in $\mathcal{M}6$ (cf. \cite[p. 152]{hr}), $\mathbf{ICMDI}$ does not imply $\mathbf{CUC}$ is $\mathbf{ZF}$. 

($b$)  One should not claim that $\mathbf{CAC}_{fin}$ and $\mathbf{CAC}_{WO}$ are equivalent in every permutation model. Namely, let $\mathcal{N}$ be the permutation model of $\mathbf{IDI}\wedge \neg \mathbf{CUC}$ constructed in  \cite[the proof to Theorem 4 (iv)]{tach1}. Since $\mathbf{IDI}$ implies $\mathbf{CAC}_{fin}$, it follows  from Proposition \ref{s4p18} that, in this model  $\mathcal{N}$, $\mathbf{CAC}_{fin}$ is true but $\mathbf{CAC}_{WO}$ is false. 
\end{remark}

\begin{remark}
\label{s4r20}
($a$) It is unknown whether $\mathbf{M}(C, S)$ implies $\mathbf{CUC}$ in $\mathbf{ZF}$ or in $\mathbf{ZFA}$. We recall that $\mathbf{BPI}$ implies $\mathbf{M}(C, S)$. The problem of whether $\mathbf{BPI}$ implies $\mathbf{CUC}$ in $\mathbf{ZF}$ or in $\mathbf{ZFA}$ is still unsolved. However, if $\mathcal{N}$ is a permutation model in which $\mathbf{BPI}$ is true, then $\mathbf{AC}_{fin}$ is also true in $\mathcal{N}$ (see, e.g., \cite[Proposition 4.39]{hh}); hence, by Proposition \ref{s4p18}(iii), $\mathbf{BPI}$ implies $\mathbf{CUC}$ in every permutation model. 

($b$) Since $\mathbf{M}(C, S)$ implies $\mathbf{CAC}_{fin}$ (see Theorem \ref{s2t2}(ii) or Theorem \ref{s4t2} with Proposition \ref{s4p15}), it follows from Proposition \ref{s4p18}(iii) that $\mathbf{CAC}_{fin}\wedge\neg\mathbf{AC}_{fin}$ is satisfied in every permutation model of $\mathbf{M}(C, S)\wedge\neg\mathbf{CUC}$. 
\end{remark}

It still eludes us whether or not $\mathbf{CUC}$ implies $\mathbf{M}(C, S)$ in $\mathbf{ZF}$. However, we are able to provide a partial solution to this intriguing open problem by proving that the statement $\mathbf{UT}(\aleph_0,\aleph_0, cuf)\wedge\neg\mathbf{ICMDI}$ is a conjunction of injectively boundable statement and it  has a permutation model. It is obvious that $\neg\mathbf{ICMDI}$ is boundable, so also injectively boundable. To show that $\mathbf{UT}(\aleph_0, \aleph_0, cuf)$ is injectively boundable, we need the following lemma proved in \cite{hs}:

\begin{lemma}
\label{s4l21} 
$(\mathbf{ZF})$
(Cf. \cite[Lemma 3.5]{hs}.) For any ordinal $\alpha$, if $\mathcal{R}$ is a collection of sets such that $|\mathcal{R}|\leq\aleph_{\alpha+1}$ and, for every $x\in\mathcal{R}$, $|x|\leq\aleph_{\alpha}$, then $|\bigcup\mathcal{R}|\not\geq\aleph_{\alpha +2}$.
\end{lemma}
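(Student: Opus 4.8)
The plan is to argue by contradiction, reducing everything to a computation carried out \emph{inside} a single well-orderable set, so that no form of choice is needed. Suppose $\aleph_{\alpha+2}\leq|\bigcup\mathcal{R}|$; then there is an injection of $\omega_{\alpha+2}$ into $\bigcup\mathcal{R}$, and I let $W$ be its range, a subset of $\bigcup\mathcal{R}$ with $|W|=\aleph_{\alpha+2}$. The crucial point is that $W$ is well-orderable, so I may fix once and for all a well-ordering $\prec$ of $W$; this single well-ordering will simultaneously well-order every subset of $W$ that I need to consider, and it is precisely this feature that lets the argument go through in $\mathbf{ZF}$.

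Next I would exploit the hypothesis $|\mathcal{R}|\leq\aleph_{\alpha+1}$ to fix a well-ordering of $\mathcal{R}$, say $\mathcal{R}=\{x_\beta:\beta<\gamma\}$ with $\gamma\leq\omega_{\alpha+1}$. For each $w\in W$, since $w\in\bigcup\mathcal{R}$, I can define $\beta(w)$ to be the least $\beta<\gamma$ with $w\in x_\beta$ (no choice is involved here, as this is simply a least element in a fixed well-order). Setting $W_\beta=\{w\in W:\beta(w)=\beta\}$ partitions $W$, and $W_\beta\subseteq x_\beta$, so $|W_\beta|\leq|x_\beta|\leq\aleph_\alpha$. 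Since $W_\beta$ is a subset of the $\prec$-well-ordered set $W$, it carries the induced well-ordering, whose order type is an ordinal of cardinality $\leq\aleph_\alpha$, hence an ordinal $<\omega_{\alpha+1}$.

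Finally I would assemble these data into an injection $W\to\gamma\times\omega_{\alpha+1}$ by sending $w$ to the pair consisting of $\beta(w)$ and the $\prec$-rank of $w$ within $W_{\beta(w)}$ (that is, the order type of the set of $\prec$-predecessors of $w$ lying in $W_{\beta(w)}$). This map is injective: equal images force both the same fiber $W_\beta$ and the same rank inside it, hence the same point. Because $\gamma\leq\omega_{\alpha+1}$, Hessenberg's theorem $\aleph_{\alpha+1}\cdot\aleph_{\alpha+1}=\aleph_{\alpha+1}$ (valid in $\mathbf{ZF}$ for well-ordered cardinals) yields $|W|\leq|\omega_{\alpha+1}\times\omega_{\alpha+1}|=\aleph_{\alpha+1}$, contradicting $|W|=\aleph_{\alpha+2}$.

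The only genuine obstacle is the temptation to well-order $\bigcup\mathcal{R}$ outright, or to choose well-orderings of the individual sets $x_\beta$, either of which would require choice; the whole trick is to avoid doing that and instead to transport the single well-ordering $\prec$ of the hypothetical large subset $W$ onto all the pieces $W_\beta$ simultaneously. Everything else is a routine cardinal-arithmetic estimate.
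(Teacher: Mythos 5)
Your proof is correct, and there is nothing to compare it against inside the paper: the authors give no proof of this lemma, citing \cite[Lemma 3.5]{hs} instead, and your argument is essentially the standard one used there --- given a hypothetical injection of $\omega_{\alpha+2}$ into $\bigcup\mathcal{R}$, partition its range by least index in a fixed enumeration $\{x_\beta:\beta<\gamma\}$ of $\mathcal{R}$, bound each fiber by $\aleph_\alpha$ via the induced well-ordering, and finish with Hessenberg's theorem $\aleph_{\alpha+1}\cdot\aleph_{\alpha+1}=\aleph_{\alpha+1}$, which is a theorem of $\mathbf{ZF}$. All the points where choice might sneak in (selecting witnesses, well-ordering the pieces) are correctly replaced by definable operations (least elements, $\prec$-ranks, the G\"odel pairing), so the argument is sound as written.
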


\begin{proposition}
\label{s4p22}
The statement $\mathbf{UT}(\aleph_0, \aleph_0, cuf)$ is injectively boundable.
\end{proposition}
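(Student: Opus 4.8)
The plan is to follow the template of Lemma \ref{s4l11}: rewrite $\mathbf{UT}(\aleph_0,\aleph_0,cuf)$ as a single universally quantified implication whose antecedent pins down the injective cardinality of the relevant set and whose remaining quantifiers are all bounded relative to that set. First I would recast the statement as a property of unions rather than of indexed families. For a set $x$, let $\theta_0(x)$ abbreviate ``there is a function $F$ with $\dom(F)=\omega$ such that each $F(n)$ is a countable subset of $x$ and $x=\bigcup_{n\in\omega}F(n)$'', and let $\theta_1(x)$ abbreviate ``there is a function $G$ with $\dom(G)=\omega$ such that each $G(n)$ is a finite subset of $x$ and $x=\bigcup_{n\in\omega}G(n)$''. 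Then $\mathbf{UT}(\aleph_0,\aleph_0,cuf)$ is equivalent to $(\forall x)(\theta_0(x)\rightarrow\theta_1(x))$: given a countable family of countable sets, the map $n\mapsto A_n$ witnesses $\theta_0$ for the union $x=\bigcup_n A_n$ (each $A_n\subseteq x$), while conversely a witness $F$ for $\theta_0(x)$ exhibits $x$ as the union of the countable family $\{F(n):n\in\omega\}$.

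The first key point is that the matrix $\theta_0(x)\rightarrow\theta_1(x)$ is boundable relative to $x$. Indeed, a witnessing $G$ for $\theta_1$ is coded by an element of $\mathcal{P}(\omega\times\mathcal{P}(x))$, and the clauses ``$G(n)$ is finite'' and ``$x=\bigcup_n G(n)$'' are boundable; similarly a witness $F$ for $\theta_0$, together with a sequence of injections of the $F(n)$ into $\omega$, can be confined to a fixed finite iteration of the power-set operation over $\TC(\{x\})\cup\omega$. Thus both $\theta_0(x)$ and $\theta_1(x)$ assert the existence of objects of bounded rank over $\TC(\{x\})\cup\omega$, so their combination is boundable in the sense of \cite[Note 103]{hr}.

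The second key point, and the place where Lemma \ref{s4l21} enters, is an \emph{a priori} bound on $|x|_-$ for any $x$ with $\theta_0(x)$. Applying Lemma \ref{s4l21} with $\alpha=0$ to $\mathcal{R}=\{F(n):n\in\omega\}$ (so $|\mathcal{R}|\leq\aleph_0\leq\aleph_1$ and $|F(n)|\leq\aleph_0$ for each $n$) gives $|x|=|\bigcup\mathcal{R}|\not\geq\aleph_2$; hence every von Neumann cardinal injecting into $x$ is $\leq\aleph_1$, whence $|x|_-\leq\aleph_1$. Consequently $\mathbf{UT}(\aleph_0,\aleph_0,cuf)$ is equivalent to
$$(\forall x)\big(|x|_-\leq\aleph_1\rightarrow(\theta_0(x)\rightarrow\theta_1(x))\big),$$
the forward direction being trivial and the converse using that $\theta_0(x)$ already forces $|x|_-\leq\aleph_1$, so the inserted hypothesis is never an obstruction. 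This displayed formula is of the injectively boundable shape of \cite[Note 103]{hr} (a universal quantifier restricted by a bound on $|x|_-$ governing a boundable matrix), exactly as in Lemma \ref{s4l11}; hence $\mathbf{UT}(\aleph_0,\aleph_0,cuf)$ is injectively boundable.

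I expect the main obstacle to be bookkeeping rather than conceptual: one must check with care that the existential witnesses in $\theta_0$ and $\theta_1$ can be kept inside a \emph{fixed} finite number of power-set iterations over $\TC(\{x\})\cup\omega$, so that the matrix genuinely meets the letter of the definition of ``boundable''. One should also confirm that the constant bound $\aleph_1$ is admissible as the injective-cardinality bound in the definition of ``injectively boundable'' (it is, since a fixed $\aleph_\beta$ is permitted), and keep straight that Lemma \ref{s4l21} delivers $\not\geq\aleph_2$, which is precisely what yields $|x|_-\leq\aleph_1$.
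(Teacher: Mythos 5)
Your proof is correct and follows essentially the same route as the paper: both apply Lemma \ref{s4l21} (with $\alpha=0$) to get an a priori bound on the injective cardinality of a countable union of countable sets, then relativize the universal quantifier to that bound so that the statement takes the injectively boundable form, with a boundable matrix. The only cosmetic difference is that the paper states the restriction as $|x|\not\geq\aleph_3$ (i.e.\ $|x|_-\leq\aleph_2$) while you use the sharper $|x|_-\leq\aleph_1$; both are admissible and the argument is otherwise identical.
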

\begin{proof}
In the light of Lemma \ref{s4l21},  $\mathbf{UT}(\aleph_{0},\aleph_{0},cuf)$ is  equivalent to the statement:  
\begin{multline}
\label{mult:hs}
(\forall x)(|x|\not\geq\aleph_{3}\rightarrow (\forall y)\text{ ``if $y$ is a countable collection of countable sets}\\ \text{whose union is $x$, then $x$ is a cuf set''}).
\end{multline}
Since, for every set $x$, the statements  $|x|\not\geq\aleph_3$ and $|x|_{-}\leq\aleph_{2}$ are equivalent, it is obvious that  (\ref{mult:hs}) is injectively boundable.  Thus, $\mathbf{UT}(\aleph_{0},\aleph_{0},cuf)$ is also injectively boundable.
\end{proof}

\begin{theorem}
\label{s4t23}
\begin{enumerate}
\item[(i)] The statement $\mathbf{LW}\wedge\mathbf{UT}(\aleph_{0},\aleph_{0},cuf)\wedge\neg \mathbf{ICMDI}$ has a permutation model. 

\item[(ii)] The statement $\mathbf{UT}(\aleph_{0},\aleph_{0},cuf)\wedge\neg\mathbf{ICMDI}$ has a $\mathbf{ZF}$-model.
\end{enumerate}
\end{theorem}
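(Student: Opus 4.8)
The plan is to obtain (ii) from (i) by the Pincus Transfer Theorem, so the substantive work is (i): exhibiting a single permutation model $\mathcal{N}$ in which $\mathbf{LW}$, $\mathbf{UT}(\aleph_0,\aleph_0,cuf)$ and $\neg\mathbf{ICMDI}$ all hold. I would take $\mathcal{N}$ to be the model whose set of atoms is $A=\bigcup_{n\in\omega}A_n$, where $\{A_n:n\in\omega\}$ is a disjoint family of non-empty finite sets (pairs already suffice, so one may assume $|A_n|=2$), where $\mathcal{G}$ is the group of all permutations of $A$ fixing each block $A_n$ setwise, and where the normal ideal is $[A]^{<\omega}$. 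The decisive structural feature is that, for every finite $E\subseteq A$ and every $x\in\mathcal{N}$ with finite support $E_x$, the $\fix_{\mathcal{G}}(E)$-orbit of $x$ is finite: an automorphism in $\fix_{\mathcal{G}}(E)$ acts on $x$ only through its restriction to $E_x$, and there are only finitely many block-preserving permutations of the finite set $E_x$ fixing $E\cap E_x$.

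For $\neg\mathbf{ICMDI}$ I would first verify $\neg\mathbf{CAC}_{fin}$ in $\mathcal{N}$. The map $n\mapsto A_n$ is fixed by all of $\mathcal{G}$, so $\{A_n:n\in\omega\}$ is a denumerable family of non-empty finite sets of $\mathcal{N}$, and it has no partial choice function: any such function would have a finite support $E$, while for any $n$ with $A_n\cap E=\emptyset$ a transposition inside $A_n$ lies in $\fix_{\mathcal{G}}(E)$ and moves the chosen element. By Proposition \ref{s4p15} in contrapositive form, $\neg\mathbf{CAC}_{fin}$ gives $\neg\mathbf{ICMDI}$; alternatively the witness can be displayed explicitly as the space $\langle X,\tau(d)\rangle$ from $(\ast)$ with $X=\bigcup_{n}A_n\cup\{\infty\}$ and each $A_n$ finite, which is infinite, compact and metrizable by Proposition \ref{s2p8}(i)--(ii) and is Dedekind-finite, since an injection $\omega\to X$ would produce a partial choice function of $\{A_n\}$.

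For $\mathbf{LW}$, let $\langle X,\le\rangle\in\mathcal{N}$ be linearly ordered and let $E$ be a common finite support of $X$ and $\le$. I would show $\fix_{\mathcal{G}}(E)\subseteq\fix_{\mathcal{G}}(X)$, which forces $X$ to be well-orderable by Proposition \ref{s2p17}: for $x\in X$ the orbit $\{\sigma(x):\sigma\in\fix_{\mathcal{G}}(E)\}$ is a finite $\le$-chain on which every $\sigma\in\fix_{\mathcal{G}}(E)$ acts as an order-automorphism, hence as the identity, so $\sigma(x)=x$. For $\mathbf{UT}(\aleph_0,\aleph_0,cuf)$, given a countable family $\{S_k:k\in\omega\}\in\mathcal{N}$ of countable sets with a support $E_0$ of its indexing function, so that $\fix_{\mathcal{G}}(E_0)$ fixes every $S_k$ setwise, I would partition $S=\bigcup_k S_k$ into its $\fix_{\mathcal{G}}(E_0)$-orbits. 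Each orbit is finite by the structural feature, lies in a single $S_k$, and is supported by $E_0$, so the quotient $Q$ of orbits has all its members supported by $E_0$ and is therefore well-orderable in $\mathcal{N}$ by Proposition \ref{s2p17}. Since each $S_k$ is countable, the set $Q_k$ of orbits contained in $S_k$ is countable and $Q=\bigcup_k Q_k$; a well-orderable set that is a countable union of countable sets is countable, because $\omega_1$ is regular in every permutation model (a cofinal $\omega$-sequence would be a well-orderable set of ordinals and would live in the ground $\mathbf{ZFC}$-model). Hence $Q$ is countable and $S=\bigcup_{q\in Q}q$ is a countable union of finite sets, i.e.\ a cuf set.

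For (ii), the conjunction $\mathbf{UT}(\aleph_0,\aleph_0,cuf)\wedge\neg\mathbf{ICMDI}$ is injectively boundable: $\mathbf{UT}(\aleph_0,\aleph_0,cuf)$ is injectively boundable by Proposition \ref{s4p22}, and $\neg\mathbf{ICMDI}$ is boundable, hence injectively boundable, as already noted. By part (i) it has a permutation model, so by the Pincus Transfer Theorem (Theorem \ref{pinth}) it has a $\mathbf{ZF}$-model. The form $\mathbf{LW}$ must be dropped here, since by Remark \ref{s1r15} it is equivalent to $\mathbf{AC}$ in $\mathbf{ZF}$, whereas $\neg\mathbf{ICMDI}$ contradicts $\mathbf{AC}$. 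I expect the main obstacle to be the verification of $\mathbf{UT}(\aleph_0,\aleph_0,cuf)$, precisely the step that the well-orderable orbit-quotient $Q$ is countable; everything else reduces to the elementary finiteness of $\fix_{\mathcal{G}}(E)$-orbits.
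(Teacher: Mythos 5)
Your proposal is correct, but it takes a genuinely different route from the paper's. The paper proves (i) in the permutation model of \cite[proof of Theorem 3.3]{hdhkr}: the atoms form denumerably many denumerable blocks $A_i$, each carrying a partition $P_i$ into finite cells of unbounded size, the group consists of block-preserving permutations moving only finitely many atoms, and supports are finite sets of (images of) the partitions $P_i$ rather than finite sets of atoms. In that model $\mathbf{UT}(\aleph_0,\aleph_0,cuf)$ is \emph{imported} from \cite{hdhkr}; the paper itself proves only $\mathbf{LW}$ (using that every group element there has finite order, so $y<\phi(y)$ would give $y<\phi(y)<\cdots<\phi^{k}(y)=y$) and $\neg\mathbf{ICMDI}$ (a compact metric space built by $(\ast)$ on the cells of $A_0$, whose cell family has no partial choice function). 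You instead verify all three conjuncts from scratch in the Second Fraenkel-type model $\mathcal{N}2$, with the finiteness of $\fix_{\mathcal{G}}(E)$-orbits of supported elements as the single key lemma: it yields $\mathbf{LW}$ (a finite linear order admits no nontrivial order-automorphism), $\neg\mathbf{ICMDI}$ (via $\neg\mathbf{CAC}_{fin}$ and Proposition \ref{s4p15}), and --- the real work --- $\mathbf{UT}(\aleph_0,\aleph_0,cuf)$ through the orbit quotient $Q$, which is well-orderable by Proposition \ref{s2p17} and countable by transfer to the kernel. All steps check out: orbits are finite, lie in single $S_k$'s, and are fixed setwise by $\fix_{\mathcal{G}}(E_0)$; each $Q_k$ is countable in the model via $O\mapsto\min g[O]$ for an injection $g:S_k\to\omega$ of the model; and your regularity argument is sound because sets of ordinals and their countability witnesses are pure sets (your phrase ``ground $\mathbf{ZFC}$-model'' should more precisely read ``kernel of the ground $\mathbf{ZFA}+\mathbf{AC}$ model''). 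As for what each approach buys: the paper outsources the hardest verification to the literature at the cost of a more elaborate model, while yours is self-contained, uses the most classical model available, and in fact proves more --- the same orbit-quotient device gives $\mathbf{CMC}$ in $\mathcal{N}2$, hence $\mathbf{UT}(\aleph_0,cuf,cuf)$ by Remark \ref{s1r11}(a), in contrast with the paper's model, where $\mathbf{UT}(\aleph_0,cuf,cuf)$ fails (Remark \ref{s3r3.24}). Part (ii) is handled identically in both proofs (Proposition \ref{s4p22}, boundability of $\neg\mathbf{ICMDI}$, Theorem \ref{pinth}), and your observation that $\mathbf{LW}$ must be dropped before transferring matches the paper exactly.
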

\begin{proof}
(i) Let us apply the permutation model $\mathcal{N}$ which was constructed in \cite[the proof to Theorem 3.3]{hdhkr}. To describe $\mathcal{N}$, we start with a model $\mathcal{M}$ of $\mathbf{ZFA}+\mathbf{AC}$ with a set $A$ of atoms such that $A$ has a denumerable partition $\{A_{i}:i\in\omega\}$ into denumerable sets, and for each $i\in\omega$, $A_{i}$ has a denumerable partition $P_{i}=\{A_{i,j}:j\in\mathbb{N}\}$ into finite sets such that, for every $j\in\omega\setminus\{0\}$, $|A_{i,j}|=j$.  Let $\text{Sym}(A)$ be the group of all permutations of $A$ and let
$$\mathcal{G}=\{\phi\in\text{Sym}(A): (\forall i\in\omega)(\phi(A_i)=A_i)\text{ and } |\{x\in A: \phi(x)\neq x\}|<\aleph_0\}.$$
 
Let $\mathbf{P}_{i}=\{\phi(P_{i}):\phi\in G\}$ and also let $\mathbf{P}=\bigcup\{\mathbf{P}_{i}:i\in\omega\}$. Let $\mathcal{F}$ be the normal filter of subgroups of $\mathcal{G}$ generated by the collection $\{\fix_{\mathcal{G}}(E): E\in [\mathbf{P}]^{<\omega}\}$. Then $\mathcal{N}$ is the permutation model determined by $\mathcal{M}$, $\mathcal{G}$ and $\mathcal{F}$. We say that a finite subset $E$ of $\mathbf{P}$ is a \emph{support} of an element $x$ of $\mathcal{N}$  if, for every $\phi\in\fix_{\mathcal{G}}(E)$, $\phi(x)=x$.

It was noticed in \cite[the proof to Theorem 3.3]{hdhkr} that, for every $i\in\omega$ and every $Q\in\mathbf{P}_i$,  the following hold:
\begin{enumerate}
\item[(a)] for any $\phi\in G$, $\phi$ fixes $Q$ if and only if $\phi$ fixes $Q$ pointwise;
\item[(b)] $(\exists j_{Q}\in\omega)(Q\supseteq\{A_{i,j}:j>j_{Q}\})$.
\end{enumerate}

To prove that $\mathbf{LW}$ is true in $\mathcal{N}$, we fix a linearly ordered set $\langle Y, \leq\rangle$ in $\mathcal{N}$ and prove that $\fix_{\mathcal{G}}(Y)\in\mathcal{F}$. To this aim, we choose a set $E\in [\mathbf{P}]^{<\omega}$ such that $E$ is a support of both $Y$ and $\leq$. To show that $\fix_{\mathcal{G}}(E)\subseteq\fix_{\mathcal{G}}(Y)$, let us consider any element $y\in Y$ and a permutation  $\phi\in\fix_{\mathcal{G}}(E)$. Suppose that $\phi(y)\neq y$. Then either $y<\phi(y)$ or $\phi(y)<y$. Since every element of $\mathcal{G}$ moves only finitely many atoms, there exists $k\in\omega\setminus\{0\}$ such that $\phi^{k}$ is the indentity mapping on $A$.  Assuming that $\phi(y)<y$, for such a $k$, we obtain the following: 
$$y<\phi(y)<\phi^{2}(y)<\ldots <\phi^{k-1}(y)<\phi^{k}(y)=y,$$
and thus $y<y$.  Arguing similarly, we deduce that if $\phi(y)<y$, then $y<y$. The contradiction obtained shows that $\phi(y)=y$ for every $y\in Y$ and every $\phi\in\fix_{\mathcal{G}}(E)$. Hence $\fix_{\mathcal{G}}(E)\subseteq\text{fix}_{\mathcal{G}}(Y)$. Since $\mathcal{F}$ is a filter and $\fix_{\mathcal{G}}(E)\in\mathcal{F}$, we infer that $\text{fix}_{\mathcal{G}}(Y)\in\mathcal{F}$. This, together with Proposition \ref{s2p17}, implies that the set $Y$ is well-orderable in $\mathcal{N}$. Hence $\mathcal{N}\models \mathbf{LW}$. 

Now, let us prove that $\mathbf{ICMDI}$ fails in $\mathcal{N}$. First, to find a metric $d$ on $A_0$ such that  $\langle A_0, d\rangle$ is a compact metric space in $\mathcal{N}$, we denote by $\infty$ the unique element of $A_{0,1}$ and, for every $n\in\mathbb{N}$, we denote by $\rho_n$ the discrete metric on $A_{0, n+1}$.  Then, making obvious adjustments in notation, we let $d$ be the metric on $A_0$ defined by $(\ast)$ in Subsection \ref{s2.3}. By Proposition \ref{s2p8}, the metric space $\langle A_0, d\rangle$ is compact. Using (a), one can check that $\{P_0\}$ is a support $\langle A_0, d\rangle$ and, therefore, $\langle A_0, d\rangle\in\mathcal{N}$. Moreover, for every $n\in\mathbb{N}$,  $\{P_0\}$ is a support of $A_{0,n}$. Hence the family $\mathcal{A}=\{A_{0,n+1}: n\in\mathbb{N}\}$ is denumerable in $\mathcal{N}$. We notice that if $M\subseteq\mathbb{N}$, then $\{A_{0,n+1}: n\in M\}\in\mathcal{N}$ because $\{P_0\}$ is a support of $A_{0,n+1}$ for every $n\in M$.  Suppose that $\mathcal{A}$ has a partial choice function in $\mathcal{N}$. Then there exists an infinite set $M\subseteq\mathbb{N}$ such that the family $\mathcal{B}=\{A_{0,n+1}: n\in M\}$ has a choice function in $\mathcal{N}$. Let $f$ be a choice function of $\mathcal{B}$ such that $f\in\mathcal{N}$. Let $D\in [\mathbf{P}]^{<\omega}$ be a support of $f$. Then $D^{\prime}=D\cap\mathbf{P}_0$ is also a support of $f$. Let $n\in\omega$ and $\phi_i\in\mathcal{G}$ with $i\in n+1$ be such that $D^{\prime}=\{\phi_i(P_0): i\in n+1\}$. Since every permutation from $\mathcal{G}$ moves only finitely many atoms, there exists $n_0\in M$ such that $n_0\geq 2$ and $A_{0,n_0}\in\phi_i(P_0)$ for all $i\in n+1$. 

Assume that $f(A_{0,n_0})=x_0$. Since $| A_{0,n_0}|=n_0\geq 2$, there exists $y_0\in A_{0,n_0}$ such that $y_0\neq x_0$. Let $\eta=(x_0, y_0)$, i.e., $\eta$ is the permutation of $A$ which interchanges  $x_0$ and $y_0$, and fixes all other atoms of $\mathcal{N}$. Then $\eta(A_{0,n_0})=A_{n_0}$ and $\eta\in\fix_{\mathcal{G}}(D^{\prime})$. Since $D^{\prime}$ is a support of $f$, we have $\eta(f)=f$. Therefore, since $\langle A_{0,n_0}, x_0\rangle\in f$, we infer that $\langle\eta(A_{0,n_0}), \eta(x_0)\rangle\in \eta(f)=f$, so $\langle A_{0,n_0}, y_0\rangle\in f$ and, in consequence, $x_0=y_0$. The contradiction obtained shows that $\mathcal{A}$ does not have a partial choice function in $\mathcal{N}$. This implies that the set $A_0$ is Dedekind-finite, and, thus, $\mathbf{ICMDI}$ is false in $\mathcal{N}$.

(ii) Let $\mathbf{\Phi}$ be the statement $\mathbf{UT}(\aleph_0, \aleph_0, cuf)\wedge\neg \mathbf{ICMDI}$. We have already noticed that $\neg\mathbf{ICMDI}$ is injectively boundable. This, together  with Proposition \ref{s4p22}, implies  that $\mathbf{\Phi}$ is a conjunction of injectively boundable statements. Therefore, (ii) follows from (i) and from Theorem \ref{pinth}.
\end{proof}

 Clearly, every $\mathbf{ZF}$- model for $\mathbf{UT}(\aleph_0, \aleph_0, cuf)\wedge\neg\mathbf{ICMDI}$ is also a model for $\mathbf{UT}(\aleph_0, \aleph_0, cuf)\wedge\neg\mathbf{M}(C, S)$. This, together with Theorem \ref{s4t23}, implies the following corollary:
 
\begin{corollary} 
\label{s4c24}
The conjunction $\mathbf{UT}(\aleph_0, \aleph_0, cuf)\wedge\neg\mathbf{M}(C, S)$ has a $\mathbf{ZF}$-model.
\end{corollary}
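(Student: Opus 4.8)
The plan is to obtain the desired $\mathbf{ZF}$-model essentially for free from Theorem \ref{s4t23}(ii), by observing that $\neg\mathbf{ICMDI}$ is a logically stronger negative statement than $\neg\mathbf{M}(C,S)$. The single ingredient needed is the implication $\mathbf{M}(C,S)\rightarrow\mathbf{ICMDI}$, which is already recorded as the last arrow in the chain of Theorem \ref{s4t2}(ii). Contraposing it yields $\neg\mathbf{ICMDI}\rightarrow\neg\mathbf{M}(C,S)$ in $\mathbf{ZF}$, so no fresh model construction is required.

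Concretely, I would argue as follows. By Theorem \ref{s4t23}(ii), there is a model $\mathcal{M}$ of $\mathbf{ZF}$ in which $\mathbf{UT}(\aleph_0,\aleph_0,cuf)\wedge\neg\mathbf{ICMDI}$ holds. Working inside $\mathcal{M}$, since $\mathbf{M}(C,S)$ would entail $\mathbf{ICMDI}$ by Theorem \ref{s4t2}(ii), and $\mathbf{ICMDI}$ fails in $\mathcal{M}$, the form $\mathbf{M}(C,S)$ must fail in $\mathcal{M}$ as well. As $\mathbf{UT}(\aleph_0,\aleph_0,cuf)$ continues to hold in $\mathcal{M}$, this single model $\mathcal{M}$ witnesses the conjunction $\mathbf{UT}(\aleph_0,\aleph_0,cuf)\wedge\neg\mathbf{M}(C,S)$, which is exactly the assertion of the corollary.

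There is essentially no obstacle left at this stage, since all the genuine work was carried out earlier. The hard step was the construction, in Theorem \ref{s4t23}(i), of a permutation model of $\mathbf{LW}\wedge\mathbf{UT}(\aleph_0,\aleph_0,cuf)\wedge\neg\mathbf{ICMDI}$ (via the model of \cite{hdhkr}, where a single compact metric space built by the $(\ast)$-construction on $A_0$ is shown to be infinite yet Dedekind-finite), followed by the injective-boundability verification of Proposition \ref{s4p22} that allows the Pincus Transfer Theorem (Theorem \ref{pinth}) to push this conjunction down to a $\mathbf{ZF}$-model. Once that $\mathbf{ZF}$-model is in hand, the corollary is a one-line deduction from the $\mathbf{ZF}$-provable implication $\mathbf{M}(C,S)\rightarrow\mathbf{ICMDI}$ of Theorem \ref{s4t2}(ii).
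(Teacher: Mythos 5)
Your proof is correct and follows exactly the paper's route: the paper likewise deduces the corollary from Theorem \ref{s4t23}(ii) by noting that any model of $\neg\mathbf{ICMDI}$ is a model of $\neg\mathbf{M}(C,S)$, via the implication $\mathbf{M}(C,S)\rightarrow\mathbf{ICMDI}$ of Theorem \ref{s4t2}(ii). Nothing is missing; your citation of the contrapositive makes explicit what the paper calls ``clear.''
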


\begin{remark}
\label{s3r3.24}
Let $\mathcal{N}$ be the model that we have used in the proof to Theorem \ref{s4t23}. The proof to Theorem 3.3 in \cite{hdhkr} shows that $\mathbf{UT}(\aleph_0, cuf, cuf)$ is false in $\mathcal{N}$. Hence $\mathbf{CUC}$ is also false in $\mathcal{N}$. Since the statement  $\mathbf{UT}(\aleph_0,\aleph_0, cuf)\wedge\neg\mathbf{UT}(\aleph_0, cuf,cuf)$ has a permutation model (for instance, $\mathcal{N}$), it also has a $\mathbf{ZF}$-model by Proposition \ref{s4p22} and Theorem \ref{pinth}.
\end{remark}

\section{The forms of type $\mathbf{M}(C, \square)$}
\label{s5}

It is known that every separable metrizable space is second-countable in $\mathbf{ZF}$. It is also known, for instance, from Theorem 4.54 of \cite{hh} or from \cite{gt}  that, in $\mathbf{ZF}$, the statement ``every second-countable metrizable space
is separable'' is equivalent  to $\mathbf{CAC}(\mathbb{R})$. The negation of $%
\mathbf{CAC}(\mathbb{R})$ is relatively consistent with $\mathbf{ZF}$, so it is relatively consistent with $\mathbf{ZF}$ that there are non-separable second-countable metrizable spaces. On the other hand, by Theorem \ref{s2t7}(i), it holds in $\mathbf{ZF}$ that separability and second-countability are equivalent in the class of compact metrizable spaces. Theorem \ref{s2t1} shows that totally bounded
metric spaces are second-countable in $\mathbf{ZF}+\mathbf{CAC}$; in particular, it holds in $\mathbf{ZF}+\mathbf{CAC}$ that all compact metrizable spaces are second-countable. 
However, the situation is completely different in $\mathbf{ZF}$. There exist 
$\mathbf{ZF}$-models including compact non-separable metric spaces. Namely, it follows from Theorem \ref{s2t2} that in every $\mathbf{ZF}$-model satisfying the negation of $\mathbf{CAC}_{fin}$, there exists an uncountable, non-separable
compact metric space whose size is incomparable to $|\mathbb{R}|$. The following proposition shows (among other facts) that  $\mathbf{M}(C, \leq|\mathbb{R}|)$  and $\mathbf{M}(C, S)$ are essentially stronger than $\mathbf{CAC}_{fin}$ in $\mathbf{ZF}$ and, furthermore, $\mathbf{IDI}$ is independent of both $\mathbf{ZF}+\mathbf{M}(C, \leq|\mathbb{R}|)$ and $\mathbf{ZF}+\mathbf{M}(C, S)$.  

\begin{proposition}
\label{s5p1}
\begin{enumerate}
\item[(i)] $(\mathbf{ZFA})$ $\mathbf{M}(C, S)\rightarrow \mathbf{M}(C, \leq|\mathbb{R}|)\rightarrow\mathbf{CAC}_{fin}$.
\item[(ii)] $\mathcal{N}_{cr}\models (\neg\mathbf{M}(C, S))\wedge\neg\mathbf{M}(C, \leq|\mathbb{R}|)$. 
\item[(iii)]  $\mathbf{CAC}_{fin}$ implies neither $\mathbf{M}(C, \leq|\mathbb{R}|)$ nor $\mathbf{M}(C, S)$ in $\mathbf{ZF}$.
\item[(iv)]  $\mathbf{IDI}$ implies neither $\mathbf{M}(C, \leq|\mathbb{R}|)$ nor $\mathbf{M}(C, S)$ in $\mathbf{ZF}$.
\item[(v)]  Neither $\mathbf{M}(C, \leq|\mathbb{R}|)$ nor $\mathbf{M}(C, S)$ implies $\mathbf{IDI}$ in $\mathbf{ZF}$.
\end{enumerate}
\end{proposition}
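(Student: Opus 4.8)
The plan is to dispatch (i) and (ii) directly, then obtain (iii) and (iv) from the single permutation model $\mathcal{N}_{cr}$ via the Pincus Transfer Theorem, and finally read off (v) from a $\mathbf{ZF}$-model already built earlier. For (i), the implication $\mathbf{M}(C,\leq|\mathbb{R}|)\to\mathbf{CAC}_{fin}$ is exactly Theorem \ref{s2t2}(ii), so nothing new is needed there. For $\mathbf{M}(C,S)\to\mathbf{M}(C,\leq|\mathbb{R}|)$ I would take a compact metrizable $\langle X,d\rangle$, use $\mathbf{M}(C,S)$ to fix an enumerated countable dense set $\{d_n:n\in\omega\}$, and check that $x\mapsto\langle d(x,d_n)\rangle_{n\in\omega}$ is an injection of $X$ into $\mathbb{R}^{\omega}$ (density makes it injective, and the map uses no choice beyond the fixed enumeration). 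Since $|\mathbb{R}^{\omega}|=|\mathbb{R}|$ in $\mathbf{ZF}$, this yields $|X|\leq|\mathbb{R}|$.

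For (ii), I would reuse the compact metric space $\mathbf{X}=\langle X,d\rangle$ of $\mathcal{N}_{cr}$ supplied by Proposition \ref{s4t8} (from \cite[the proof to Theorem 2.5]{kerta}), where $X=A\cup\{\infty\}$ is assembled from the circles $A_n$ by the metric $(\ast)$ of Subsection \ref{s2.3}. Since $\{A_n:n\in\mathbb{N}\}$ has no choice function in $\mathcal{N}_{cr}$ (Remark \ref{s2r12}), Proposition \ref{s2p8}(iv) gives that $\mathbf{X}$ is not separable, i.e.\ $\neg\mathbf{M}(C,S)$. For $\neg\mathbf{M}(C,\leq|\mathbb{R}|)$ I would note that $\mathbb{R}$ is well-orderable in $\mathcal{N}_{cr}$ (it injects into the well-orderable power set of the pure set $\mathbb{Q}$), so $|X|\leq|\mathbb{R}|$ would make $X$, hence $A$, well-orderable; but any well-ordering of $A$ selects the least point of each $A_n$, producing the missing choice function. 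Hence $|X|\not\leq|\mathbb{R}|$.

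Parts (iii) and (iv) I would obtain from a single transfer. In $\mathcal{N}_{cr}$ we have $\mathbf{IDI}$ (Remark \ref{s2r12}) together with $\neg\mathbf{M}(C,\leq|\mathbb{R}|)$ from (ii). Because $\mathbf{IDI}\to\mathbf{CAC}_{fin}$ and, by (i), $\mathbf{M}(C,S)\to\mathbf{M}(C,\leq|\mathbb{R}|)$, a $\mathbf{ZF}$-model of $\mathbf{IDI}\wedge\neg\mathbf{M}(C,\leq|\mathbb{R}|)$ automatically satisfies $\mathbf{CAC}_{fin}\wedge\neg\mathbf{M}(C,S)$ as well, so it witnesses both (iii) and (iv) at once. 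To invoke Theorem \ref{pinth} I must check that both conjuncts are injectively boundable: $\mathbf{IDI}$ by exactly the argument of Lemma \ref{s4l11}, since a Dedekind-finite set $x$ has $|x|_{-}\leq\omega$; and $\neg\mathbf{M}(C,\leq|\mathbb{R}|)$ by the observation that in any compact metric space every well-orderable subspace is totally bounded, hence separable by Proposition \ref{s4p1}, hence of size $\leq|\mathbb{R}|$. This gives $|X|_{-}\leq|\mathbb{R}|_{-}$ for every compact metric $\langle X,d\rangle$, which lets me rewrite $\neg\mathbf{M}(C,\leq|\mathbb{R}|)$ as an existential statement carrying this injective-cardinality bound on its witness, in the style of Lemma \ref{s4l11}.

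For (v), no new construction is needed: the $\mathbf{ZF}$-model of $\mathbf{BPI}\wedge\mathbf{ICMDI}\wedge\neg\mathbf{IDI}$ produced in Theorem \ref{s4t12} satisfies $\mathbf{M}(C,S)$ and $\mathbf{M}(C,\leq|\mathbb{R}|)$ by Theorem \ref{s2t7}(iii) while also satisfying $\neg\mathbf{IDI}$, which is exactly the failure of both implications demanded in (v). I expect the one genuinely delicate point to be the injective-boundability verification for $\neg\mathbf{M}(C,\leq|\mathbb{R}|)$ in (iii)--(iv): the cardinality bound itself is the separability-of-well-orderable-subspaces remark above, and the remaining effort is simply to match the syntactic shape of an injectively boundable formula as defined in \cite[Note 103]{hr}.
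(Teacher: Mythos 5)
Your overall architecture matches the paper's: parts (i) and (ii) rest on $\mathcal{N}_{cr}$, parts (iii)--(iv) on Theorem \ref{pinth} applied to a conjunction holding in $\mathcal{N}_{cr}$, and part (v) on a $\mathbf{BPI}$-based model. The differences are minor and mostly to your credit. In (i) you give a direct, choice-free injection $x\mapsto (d(x,d_n))_{n\in\omega}$ into $\mathbb{R}^{\omega}$ instead of quoting Theorem \ref{s2t2}(i); this is visibly valid in $\mathbf{ZFA}$, which is what (i) claims (for the second implication you should still remark, as the paper does, that the argument behind Theorem \ref{s2t2}(ii) carries over from $\mathbf{ZF}$ to $\mathbf{ZFA}$). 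In (ii) you reconstruct the space of Proposition \ref{s4t8} explicitly and apply Proposition \ref{s2p8}(iv); the paper instead argues abstractly (not weakly Loeb $\Rightarrow$ not well-orderable $\Rightarrow$ no injection into the well-orderable set $\mathbb{R}$, then $\neg\mathbf{M}(C,S)$ by the contrapositive of (i)), which avoids leaning on details of \cite{kerta} that the paper never restates; both routes are fine. Your consolidation of (iii)--(iv) into a single transfer of $\mathbf{IDI}\wedge\neg\mathbf{M}(C,\leq|\mathbb{R}|)$ is legitimate, since $\mathbf{IDI}\rightarrow\mathbf{CAC}_{fin}$ and the contrapositive of (i) hold in $\mathbf{ZF}$, whereas the paper runs the transfer once for each of $\mathbf{CAC}_{fin}$ and $\mathbf{IDI}$. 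Your (v), reusing Theorem \ref{s4t12} together with Theorem \ref{s2t7}(iii), is clean and, if anything, tidier than the paper's wording, which transfers $\mathbf{\Psi}\wedge\neg\mathbf{IDI}$ from $\mathcal{N}3$ again.

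The one genuine flaw sits exactly at the point you call delicate: your verification that $\neg\mathbf{M}(C,\leq|\mathbb{R}|)$ is injectively boundable. The mechanism of Lemma \ref{s4l11} and Proposition \ref{s4p22} --- restricting a \emph{universal} quantifier to sets $x$ with $|x|_{-}\leq\kappa$ --- is a template for statements of the form $(\forall x)(|x|_{-}\leq\kappa\rightarrow\varphi(x))$ with $\varphi$ boundable. It is not a template for existential statements: ``an existential statement carrying an injective-cardinality bound on its witness'' is not one of the admissible shapes in \cite[Note 103]{hr}, so as written your argument does not establish injective boundability, and the inequality $|X|_{-}\leq|\mathbb{R}|_{-}$ for every compact metric space (true, by the argument you sketch via Proposition \ref{s4p1}) does no work here. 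What is needed is the simpler observation the paper makes for $\neg\mathbf{ICMDI}$ and $\neg\mathbf{IDI}$: $\neg\mathbf{M}(C,\leq|\mathbb{R}|)$ is \emph{boundable}, because it asserts the existence of a pair $\langle X,d\rangle$ such that $d$ is a metric on $X$, $\langle X,\tau(d)\rangle$ is compact, and there is no injection of $X$ into $\mathbb{R}$ --- a matrix all of whose quantifiers can be bounded by finitely many power-set applications to $\TC(\{\langle X,d\rangle\})\cup\omega$ (recall that $\mathbb{R}$ is a pure set of fixed small rank) --- and boundable statements are injectively boundable (see \cite[2A5]{pin} and \cite[p. 285]{hr}). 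With that one-line replacement your (iii)--(iv) is complete and the whole proposal goes through.
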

\begin{proof}
(i) It follows directly from Theorem \ref{s2t2} that the implications given in (i) are true in $\mathbf{ZF}$; however, the arguments from \cite{k} are sufficient to show that these implications are also true in $\mathbf{ZFA}$. 

(ii)  By Proposition \ref{s4t8}, there exists a compact metric space $\mathbf{X}=\langle X, d\rangle$ in $\mathcal{N}_{cr}$ such that the set $X$ is not well-orderable in $\mathcal{N}_{cr}$. Since $\mathbb{R}$ is well-orderable in $\mathcal{N}_{cr}$, the set $\mathbf{X}$ is not equipotent to a subset of $\mathbb{R}$ in $\mathcal{N}_{cr}$. Hence $\mathbf{M}(C, \leq|\mathbb{R}|)$ fails in $\mathcal{N}_{cr}$. This, together with (i), implies (ii).

(iii)--(iv) Let $\mathbf{\Phi}$ be either $\mathbf{CAC}_{fin}$ or $\mathbf{IDI}$. In the light of (i), to prove (iii) and (iv), it suffices to show that the conjunction $\mathbf{\Phi}\wedge\neg\mathbf{M}(C, \leq |\mathbb{R}|)$ has a $\mathbf{ZF}$-model. It follows from (ii) that the conjunction $\mathbf{\Phi}\wedge\neg\mathbf{M}(C, \leq|\mathbb{R}|)$ has a permutation model (for instance, $\mathcal{N}_{cr}$). Therefore, since the statements $\mathbf{CAC}_{fin}$, $\mathbf{IDI}$ and $\neg\mathbf{M}(C, \leq|\mathbb{R}|)$ are all injectively boundable,  $\mathbf{\Phi}\wedge\neg\mathbf{M}(C, \leq|\mathbb{R}|)$ has a $\mathbf{ZF}$-model by Theorem \ref{pinth}.  

(v) Let $\mathbf{\Psi}$ be either $\mathbf{M}(C, \leq|\mathbb{R}|)$ or $\mathbf{M}(C, S)$. Since $\mathbf{BPI}$ is true in $\mathcal{N}3$, it follows from Theorem \ref{s2t7} (iii) that $\mathbf{\Psi}$ is true in $\mathcal{N}3$. It is known that $\mathbf{IDI}$ is false in $\mathcal{N}3$. Hence, the conjunction $\mathbf{\Psi}\wedge\neg\mathbf{IDI}$ has a permutation model. To complete the proof, it suffices to apply Theorem \ref{pinth}. 
\end{proof}

\begin{theorem}
\label{s5t02}
$(\mathbf{ZF})$
\begin{enumerate} 
\item[(i)] $(\mathbf{CAC}_{fin}\wedge\mathbf{M}(C,\sigma-l.f))\leftrightarrow\mathbf{M}(C,S)$.
\item[(ii)] $(\mathbf{CAC}_{fin}\wedge \mathbf{M}(C, STB))\leftrightarrow \mathbf{M}(C, S)$.
\end{enumerate}
\end{theorem}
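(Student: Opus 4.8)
The plan is to establish each biconditional by proving its two implications separately, and to handle the two directions $\mathbf{M}(C,S)\Rightarrow(\text{conjunction})$ simultaneously. Throughout, the workhorse is the equivalence $\mathbf{CAC}_{fin}\Leftrightarrow\mathbf{CUC}_{fin}$ recorded in Remark \ref{s1r10}(i), which lets me collapse any countable union of finite sets to a countable set, together with Theorem \ref{s2t7}(i), by which separability and second-countability coincide for compact metrizable spaces.

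First I would dispatch the implication $\mathbf{M}(C,S)\Rightarrow\mathbf{CAC}_{fin}\wedge\mathbf{M}(C,\sigma-l.f)\wedge\mathbf{M}(C,STB)$, which covers the ``$\Leftarrow$'' halves of both (i) and (ii). The conjunct $\mathbf{CAC}_{fin}$ is exactly Theorem \ref{s2t2}(ii). For the rest, Theorem \ref{s2t7}(i) makes a compact metrizable space under $\mathbf{M}(C,S)$ second-countable, so it has a countable base $\mathcal{B}=\{B_k:k\in\omega\}$; such a base is automatically $\sigma$-locally finite, since $\mathcal{B}=\bigcup_{k\in\omega}\{B_k\}$ and every one-element family is locally finite, giving $\mathbf{M}(C,\sigma-l.f)$. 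For $\mathbf{M}(C,STB)$, given a compact metric space $\langle X,d\rangle$, I would fix a countable dense set, enumerate it as $\{x_k:k\in\omega\}$, and for each $n\in\mathbb{N}$ set $D_n=\{x_k:k\le m_n\}$, where $m_n$ is the least $m$ for which $\{B_d(x_k,\frac1n):k\le m\}$ covers $X$ (such an $m$ exists by density and compactness). The resulting sequence $(D_n)_{n\in\mathbb{N}}$ is built by a definable recursion, uses no choice, and witnesses strong total boundedness.

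For the reverse implication of (i), assume $\mathbf{CAC}_{fin}$ and $\mathbf{M}(C,\sigma-l.f)$ and let $\mathbf{X}=\langle X,\tau\rangle$ be compact metrizable. Fix a $\sigma$-locally finite base $\mathcal{B}=\bigcup_{n\in\omega}\mathcal{B}_n$ with each $\mathcal{B}_n$ locally finite. The key step is the observation that a locally finite family in a compact space is finite: the neighbourhoods witnessing local finiteness form an open cover, a finite subcover of which meets only finitely many members of $\mathcal{B}_n$, while every non-empty member meets that cover. Hence each $\mathcal{B}_n$ is finite, so $\mathcal{B}=\bigcup_{n\in\omega}\mathcal{B}_n$ is a countable union of finite sets, and is therefore countable by $\mathbf{CUC}_{fin}$. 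Thus $\mathbf{X}$ is second-countable, and Theorem \ref{s2t7}(i) upgrades this to separability, giving $\mathbf{M}(C,S)$.

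The reverse implication of (ii) is similar but shorter: assuming $\mathbf{CAC}_{fin}$ and $\mathbf{M}(C,STB)$ and given a compact metrizable $\langle X,\tau\rangle$, I would instantiate a compatible metric $d$ (one exists by metrizability), apply $\mathbf{M}(C,STB)$ to get a sequence $(D_n)_{n\in\mathbb{N}}$ of finite $\frac1n$-nets, and note that $D=\bigcup_{n\in\mathbb{N}}D_n$ is dense and, being a countable union of finite sets, countable by $\mathbf{CUC}_{fin}$; hence $\mathbf{X}$ is separable. The main obstacle in the whole argument is not depth but vigilance about choice: every construction (the finite subcovers, the nets $D_n$, the single instantiated metric) must be carried out canonically or by a single existential instantiation, so that the only genuine appeal to choice is the one isolated application of $\mathbf{CUC}_{fin}=\mathbf{CAC}_{fin}$. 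The one place where a substantive topological fact is required is the compactness-forces-finiteness lemma driving the reverse direction of (i).
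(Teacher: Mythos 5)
Your proof is correct and follows essentially the same route as the paper's: the compactness-forces-finiteness argument for locally finite families, the single application of $\mathbf{CUC}_{fin}=\mathbf{CAC}_{fin}$ to the countable union of finite families (or nets), Theorem \ref{s2t7}(i) to pass between second-countability and separability, and Theorem \ref{s2t2}(ii) for the converse conjunct $\mathbf{CAC}_{fin}$. The only difference is organizational (you batch the three consequences of $\mathbf{M}(C,S)$ together) and that you spell out the initial-segment construction of the $\frac{1}{n}$-nets, which the paper dismisses as straightforward.
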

\begin{proof}
Let $\mathbf{X}=\langle X, d\rangle$ be a compact metric space.

($\rightarrow $) We assume both $\mathbf{CAC}_{fin}$ and  $\mathbf{M}(C, \sigma-l.f)$. By our
hypothesis, $\mathbf{X}$ has a base $\mathcal{B=}\bigcup \{\mathcal{B}%
_{n}:n\in \mathbb{N}\}$ such that, for every $n\in \mathbb{N}$, the family $\mathcal{B}%
_{n}$ is locally finite. In $\mathbf{ZF}$, to check that if $\mathcal{A}$ is a locally finite family in $\mathbf{X}$, then it follows from the compactness of $\mathbf{X}$ that $\mathcal{A}$ is finite, we notice that the collection $\mathcal{V}$ of all open sets $V$ of $\mathbf{X}$ such that $V$ meets only finitely many members of $\mathcal{A}$ is an open cover of $\mathbf{X}$, so $\mathcal{V}$ has a finite subcover. In consequence, $X$ meets only finitely many members of $\mathcal{A}$, so $\mathcal{A}$ is finite. Therefore, for every $n\in\mathbb{N}$, the family $\mathcal{B}_n$ is finite. This, together with $\mathbf{CAC}_{fin}$, implies that the family $\mathcal{B}$ is countable, so $\mathbf{X}$ is second-countable.  Hence, by Theorem \ref{s2t7}(i),  $\mathbf{X}$ is
separable as required.\smallskip 

($\leftarrow $) By Proposition \ref{s5p1}, $\mathbf{M}(C,S)$ implies $\mathbf{CAC}_{fin}$. To conclude the proof to (i), it suffices to notice that $\mathbf{M}(C,S)$
implies $\mathbf{M}(C,2)$ and $\mathbf{M}(C,2)$ trivially implies that every
compact metric space has a $\sigma $ - locally finite base.\smallskip 

(ii) ($\rightarrow $) Now, we assume both $\mathbf{CAC}_{fin}$ and $\mathbf{M}(C, STB)$. Since $\mathbf{X}$ is strongly totally
bounded, it follows that it admits a sequence $(D_{n})_{n\in \mathbb{N}}$
such that, for every $n\in \mathbb{N}$, $D_{n}$ is a $\frac{1}{n}$-net of $\mathbf{X}$.
By $\mathbf{CAC}_{fin}$,  the set $D=\bigcup \{D_{n}:n\in \mathbb{N}\}$ is
countable. Since, $D$ is dense in $\mathbf{X}$, it follows that $\mathbf{X}$ is
separable.

($\leftarrow $) It is straightforward to check that every separable compact metric space is strongly totally bounded. Hence $\mathbf{M}(C,S)$ implies $\mathbf{M}(C, STB)$. Proposition \ref{s5p1} completes the proof.
\end{proof}

\begin{remark}
\label{s5r03}
By Proposition \ref{s5p1}, there exists a model $\mathcal{M}$ of $\mathbf{ZF}$ in which $\mathbf{CAC}_{fin}$ holds and $\mathbf{M}(C,S)$ fails. By Theorem \ref{s5t02}(i), $\mathbf{M}(C, \sigma-l.f)$ fails in $\mathcal{M}$. This, together with Theorem \ref{s2t5}(i), implies that $\mathbf{M}(C, \sigma-l.f.)$ independent of $\mathbf{ZF}$. 
\end{remark}

\begin{theorem}
\label{s5t2} $(\mathbf{ZF})$
\begin{enumerate}
 \item[(i)] $(\mathbf{CAC}(\mathbb{R},C)\wedge\mathbf{M}(C, \leq |\mathbb{R}|))\leftrightarrow %
\mathbf{M}(C, S).$
\item[(ii)] $\mathbf{CAC}(\mathbb{R})$ does not imply $\mathbf{M}(C, \leq
|\mathbb{R}|)$.
\end{enumerate}
\end{theorem}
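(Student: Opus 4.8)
The plan is to prove (i) by establishing its two implications separately, and then to obtain (ii) as a purely logical consequence of (i) together with the known independence of $\mathbf{CAC}(\mathbb{R})$ and $\mathbf{M}(C,S)$. For the backward implication of (i), assume $\mathbf{M}(C,S)$. Then $\mathbf{M}(C,\leq|\mathbb{R}|)$ is immediate from Proposition \ref{s5p1}(i). Moreover, by Theorem \ref{s2t3}(a), $\mathbf{M}(C,S)$ is equivalent to $\mathbf{CAC}(C,M)$, and since every disjoint family $\{A_n\}$ of subsets of $\mathbb{R}$ carrying compact metrics $\{d_n\}$ is in particular a family of non-empty compact metric spaces, $\mathbf{CAC}(C,M)$ trivially yields $\mathbf{CAC}(\mathbb{R},C)$. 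Thus $\mathbf{M}(C,S)$ implies the conjunction.

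The substance is the forward implication of (i): assuming $\mathbf{CAC}(\mathbb{R},C)\wedge\mathbf{M}(C,\leq|\mathbb{R}|)$, I would aim at $\mathbf{CAC}(C,M)$ and then invoke Theorem \ref{s2t3}(a) to conclude $\mathbf{M}(C,S)$. So let $\{\langle X_n,d_n\rangle:n\in\omega\}$ be a family of non-empty compact metric spaces; after replacing $X_n$ by $X_n\times\{n\}$ I may assume the $X_n$ are pairwise disjoint and disjoint from a fixed point $\infty$. The key step is to glue the whole family into a single space: applying the metric $(\ast)$ of Subsection \ref{s2.3} on $Y=\bigcup_{n}X_n\cup\{\infty\}$, Proposition \ref{s2p8}(ii) guarantees that $\langle Y,\tau(d)\rangle$ is a \emph{compact} metrizable space. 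Now apply $\mathbf{M}(C,\leq|\mathbb{R}|)$ to $Y$ to obtain a single injection $g:Y\to\mathbb{R}$; its restrictions $g_n=g|_{X_n}$ embed every $X_n$ into $\mathbb{R}$ simultaneously, and the images $A_n=g_n[X_n]$ form a disjoint family of non-empty subsets of $\mathbb{R}$. Transporting each $d_n$ through $g_n$ produces a family $\{d_n'\}$ of metrics for which $\langle A_n,d_n'\rangle$ is isometric to $\langle X_n,d_n\rangle$, hence compact. Applying $\mathbf{CAC}(\mathbb{R},C)$ to $\{A_n:n\in\omega\}$ yields a choice function, and pulling it back along the $g_n^{-1}$ gives a choice function for $\{X_n:n\in\omega\}$. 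This establishes $\mathbf{CAC}(C,M)$, and hence $\mathbf{M}(C,S)$.

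I expect the main obstacle to be exactly the \emph{coherence} of the embeddings: $\mathbf{M}(C,\leq|\mathbb{R}|)$ only provides, for each compact metrizable space separately, \emph{some} injection into $\mathbb{R}$, and there is no way to choose such injections simultaneously for the whole family $\{X_n\}$ without extra choice. Collapsing the family to the single compact space $Y$ via $(\ast)$ circumvents this, since one application of $\mathbf{M}(C,\leq|\mathbb{R}|)$ then embeds all the $X_n$ at once. One must only check (routinely) that the transported metrics $d_n'$ genuinely form a set-indexed family and that each $\langle A_n,d_n'\rangle$ is compact, so that $\mathbf{CAC}(\mathbb{R},C)$ is applicable.

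Finally, for (ii), I would argue by contradiction using (i). Since $\mathbf{CAC}(\mathbb{R})$ trivially implies $\mathbf{CAC}(\mathbb{R},C)$ (the latter merely restricts the former to disjoint families carrying compact metrics), if $\mathbf{CAC}(\mathbb{R})$ implied $\mathbf{M}(C,\leq|\mathbb{R}|)$, then $\mathbf{CAC}(\mathbb{R})$ would imply $\mathbf{CAC}(\mathbb{R},C)\wedge\mathbf{M}(C,\leq|\mathbb{R}|)$, which by (i) is equivalent to $\mathbf{M}(C,S)$. This would force $\mathbf{CAC}(\mathbb{R})$ to imply $\mathbf{M}(C,S)$, contradicting the known fact (see \cite{k}) that $\mathbf{CAC}(\mathbb{R})$ and $\mathbf{M}(C,S)$ are independent in $\mathbf{ZF}$. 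Hence $\mathbf{CAC}(\mathbb{R})$ does not imply $\mathbf{M}(C,\leq|\mathbb{R}|)$; concretely, any $\mathbf{ZF}$-model of $\mathbf{CAC}(\mathbb{R})\wedge\neg\mathbf{M}(C,S)$ is, by (i), a model of $\mathbf{CAC}(\mathbb{R})\wedge\neg\mathbf{M}(C,\leq|\mathbb{R}|)$.
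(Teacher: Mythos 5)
Your proof is correct, but it follows a genuinely different route from the paper's in part (i). For the backward direction, the paper does not invoke Theorem \ref{s2t3}(a) at all: it re-proves $\mathbf{M}(C,S)\rightarrow\mathbf{CAC}(\mathbb{R},C)$ from scratch, gluing the given family $\{\langle A_n,\rho_n\rangle\}$ into one compact space via the metric $(\ast)$, taking a countable dense set $\{x_m\}$ and choosing from each $A_n$ the point $x_m$ of least index. For the forward direction the paper also stays at the level of $\mathbf{M}(C,S)$ itself: it fixes a single compact metric space $\mathbf{X}$, uses $\mathbf{M}(C,\leq|\mathbb{R}|)$ to get $|X^{\mathbb{N}}|\leq|\mathbb{R}|$ and hence a uniform family of injections $\psi_n:X^{k_n}\to\mathbb{R}$, shows that the sets $C_n\subseteq X^{k_n}$ of $\tfrac1n$-net tuples of minimal length are closed (hence compact), and applies $\mathbf{CAC}(\mathbb{R},C)$ to $\{\psi_n(C_n)\}$ to extract a sequence of finite nets, yielding strong total boundedness and so separability. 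You instead route everything through the equivalence $\mathbf{M}(C,S)\leftrightarrow\mathbf{CAC}(C,M)$ of Theorem \ref{s2t3}(a): the backward direction becomes the trivial restriction $\mathbf{CAC}(C,M)\rightarrow\mathbf{CAC}(\mathbb{R},C)$, and in the forward direction you prove $\mathbf{CAC}(C,M)$ by gluing the whole countable family into one compact space via $(\ast)$, applying $\mathbf{M}(C,\leq|\mathbb{R}|)$ \emph{once} to that glued space to embed all the $X_n$ into $\mathbb{R}$ coherently, transporting the metrics, and applying $\mathbf{CAC}(\mathbb{R},C)$ to the images. Both proofs hinge on the same trick -- one application of the size hypothesis to a single space to sidestep any simultaneous choice of injections -- but yours is shorter and more modular (and incidentally gets the disjointness needed for $\mathbf{CAC}(\mathbb{R},C)$ for free, a point the paper glosses over for the family $\{\psi_n(C_n)\}$), while the paper's is self-contained apart from Theorem \ref{s2t7}(i) and, along the way, shows that the hypotheses force every compact metric space to be strongly totally bounded, tying in with Theorem \ref{s5t02}(ii). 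Your part (ii) is the same argument as the paper's.
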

\begin{proof}
(i) ($\rightarrow $)  We assume $\mathbf{CAC}(\mathbb{R},C)$ and $\mathbf{M}(C, \leq |\mathbb{R}|)$. We fix a  compact metric space $\mathbf{X}=\langle X,d\rangle$ and prove that $\mathbf{X}$ is separable. For every $n\in \mathbb{N}$, let $\mathbf{X}^{n}=\langle X^n, d_n\rangle$ where $d_n$ is the metric on $X^n$ defined by:

$$d_{n}(x,y)=\max \{d(x(i),y(i)):i\in n\}.$$

\noindent Then $\mathbf{X}^n$ is compact for every $n\in\mathbb{N}$.  By $\mathbf{M}(C, \leq |\mathbb{R}%
|) $, $|X|\leq |\mathbb{R}|$. Therefore, since  $|X^{\mathbb{N}}|\leq |\mathbb{R}|$, there exists a family $\{\psi_n: n\in\mathbb{N}\}$ such that, for every $n\in\mathbb{N}$, $\psi_n: X^n\to\mathbb{R}$ is an injection. The metric $d$ is totally bounded, so, for every $n\in 
\mathbb{N} $, the set $$M_n=\{m\in \mathbb{N}:\exists y\in X^{m},\forall x\in
X,d(x,\{ y(i): i\in n\})<\frac{1}{n}\}$$
\noindent is non-empty. Let $k_n=\min M_n$ for every $n\in\mathbb{N}$. To prove that $\mathbf{X}$ is strongly totally bounded, for every $n\in\mathbb{N}$, we consider the set $C_n$ defined as follows:
\[
C_{n}=\{y\in X^{k_{n}}:\forall x\in X (d(x,\{y(i): i\in k_n\})<\frac{1}{n})\}.
\]%
We claim that for every $n\in \mathbb{N},C_{n}$ is a closed subset of $%
\mathbf{X}^{k_{n}}$. To this end, we fix $y_0\in X^{k_{n}}\setminus C_{n}$.
Then, since $X$ is infinite, there exists $x_{0}\in X$ such that $B_{d}(x_{0},\frac{1}{n})\cap \{y_0(i): i\in n\}=\emptyset $. Let $%
r=d(x_{0},\{y_0(i): i\in n\})$ and $\varepsilon =r-\frac{1}{n}$. Then $\varepsilon >0$. To show that $B_{d_{k_n}}(y_0,%
\varepsilon )\cap C_{n}=\emptyset $, suppose that $z_0\in B_{d_{k_n}}(y_0,\varepsilon )\cap
C_{n}$. Then 
\[
d(x_{0},\{z_0(i): i\in k_n\})=\max \{d(x_{0}, z_0(i)): i\in k_{n}\}<\frac{1}{n} 
\]%
and 
\[
d_{k_n}(y_0, z_0)=\max \{d( y_0(i), z_0(i)):i\in k_{n}\}<\varepsilon.
\]%
For every $i\in k_n$, we have: 
\[
r\leq d(x_{0},y_0(i))\leq d(x_{0}, z_0(i))+d(z_0(i),y_0(i))\leq d(x_0, z_0(i))+d_{k_n}(z_0, y_0).
\]%
Hence, for every $i\in k_n$, the following inequalities hold: 
\[
r-d_{k_n}(z_0,y_0)\leq d(x_{0}, z_0(i))<\frac{1}{n}.
\]%
\noindent In consequence, $\varepsilon<d_{k_n}(z_{0},y_0)$. The contradiction obtained shows that $B_{d_{k_n}}(y_0,%
\varepsilon )\cap C_{n}=\emptyset$. Hence, for every $n\in\mathbb{N}$, the non-empty set $C_n$ is compact in the metric space $\mathbf{X}^{k_n}$. Therefore, it follows from $\mathbf{CAC}(\mathbb{R}, C)$ that the family $\{\psi_n(C_n): n\in\mathbb{N}\}$ has a choice function. This implies that $\{C_n: n\in\mathbb{N}\}$ has a choice function, so we can fix $f\in\prod\limits_{n\in\mathbb{N}}C_n$. Then, for every $n\in\mathbb{N}$, the set $D_n=\{f(n)(i): i\in k_n\}$ is a $\frac{1}{n}$-net in $\mathbf{X}$. This shows that $\mathbf{X}$ is strongly totally bounded. It is easily seen that the set $D=\bigcup\limits_{n\in\mathbb{N}}D_n$ is countable and dense in $\mathbf{X}$. Hence 
$\mathbf{CAC}(\mathbb{R}, C)\wedge\mathbf{M}(C, \leq |\mathbb{R}|)$ implies $\mathbf{M}(C, S)$.\smallskip

($\leftarrow $) By Proposition \ref{s5p1}(i), $\mathbf{M}(C, S)$ implies $\mathbf{M}(C, \leq|%
\mathbb{R}|)$. Assuming $\mathbf{M}(C, S)$, we prove that $\mathbf{CAC}(\mathbb{R},C)$ holds. To this aim, we fix a disjoint family $%
\mathcal{A}=\{A_{n}:n\in \mathbb{N}\}$ of non-empty subsets of $\mathbb{R}$ such that there exists a family $\{\rho_{n}:n\in \mathbb{N}\}$ of metrics such that, for every $n\in \mathbb{N}$, $\langle A_{n},\rho_{n}\rangle$ is a compact metric space. Let $A=\bigcup\limits_{n\in\mathbb{N}}A_n$, let $\infty\notin A$ and $X=A\cup\{\infty\}$. Let $d$ be the metric on $X$ defined by ($\ast$) in Subsection \ref{s2.3}. Then, by Proposition \ref{s2p8},  $\mathbf{X}=\langle X, d\rangle$ is a compact metric space. It follows from $\mathbf{M}(C, S)$ that $\mathbf{X}$ is separable. Let $H=\{x_n: n\in\mathbb{N}\}$ be a dense set in $\mathbf{X}$. For every $n\in\mathbb{N}$, let $m_n=\min\{m\in\mathbb{N}: x_m\in A_n\}$ and let $h(n)=x_{m_n}$. Then $h$ is a choice function of $\mathcal{A}$. Hence $\mathbf{M}(C, S)$ implies $\mathbf{CAC}(\mathbb{R},C)$. \medskip

(ii) It was shown in \cite{k} that $\mathbf{CAC}(\mathbb{R})$ and $\mathbf{M}(C, S)$ are independent of each other.  Since $\mathbf{M}(C, S)$ implies $\mathbf{M}(C, \leq|\mathbb{R}|)$ (see Proposition \ref{s5p1}(i)), while  $\mathbf{CAC}(%
\mathbb{R})$ implies $\mathbf{CAC}(\mathbb{R},C)$ but not $\mathbf{M}(C, S)$ the conclusion follows from (i).
\end{proof}

\begin{corollary}
\label{s5c3}
In every permutation model, $\mathbf{M}(C, \leq|\mathbb{R}|)$ and $\mathbf{M}(C, S)$ are equivalent.
\end{corollary}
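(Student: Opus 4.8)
The plan is to reduce everything to Theorem~\ref{s5t2}(i) by showing that the extra hypothesis $\mathbf{CAC}(\mathbb{R},C)$ appearing there is automatically true in every permutation model. One direction of the equivalence is immediate and needs no choice: by Proposition~\ref{s5p1}(i), the implication $\mathbf{M}(C,S)\to\mathbf{M}(C,\leq|\mathbb{R}|)$ already holds in $\mathbf{ZFA}$, hence in any permutation model $\mathcal{N}$. So all the work is in the converse $\mathbf{M}(C,\leq|\mathbb{R}|)\to\mathbf{M}(C,S)$ inside $\mathcal{N}$.

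First I would record the standard structural fact recalled just before Proposition~\ref{s2p17}: in any permutation model the power set of a pure set is well-orderable. Since $\mathbb{R}$ is a pure set (its transitive closure contains no atoms), $\mathbb{R}$ is well-orderable in $\mathcal{N}$; I would fix a well-ordering $\preceq$ of $\mathbb{R}$ with $\preceq\in\mathcal{N}$. I would then observe that $\mathbf{CAC}(\mathbb{R},C)$ holds trivially in $\mathcal{N}$: given any disjoint family $\{A_n:n\in\mathbb{N}\}$ of non-empty subsets of $\mathbb{R}$ (the compact-metric hypothesis is not even needed), the map $n\mapsto\min_{\preceq}A_n$ is a choice function; it is definable from the parameters $\preceq$ and $\{A_n:n\in\mathbb{N}\}$ and is itself a pure set, so it belongs to $\mathcal{N}$. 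Thus $\mathcal{N}\models\mathbf{CAC}(\mathbb{R},C)$ (indeed $\mathcal{N}\models\mathbf{CAC}(\mathbb{R})$).

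Finally, assuming $\mathbf{M}(C,\leq|\mathbb{R}|)$ in $\mathcal{N}$ and combining it with the $\mathbf{CAC}(\mathbb{R},C)$ just established, Theorem~\ref{s5t2}(i) (which, like all $\mathbf{ZF}$-theorems of the paper, is also a theorem of $\mathbf{ZFA}$) yields $\mathbf{M}(C,S)$. Together with the free forward implication this gives the desired equivalence in $\mathcal{N}$. I do not expect any genuine obstacle here, since the entire difficulty has already been absorbed into Theorem~\ref{s5t2}(i). The only point requiring a little care is the verification that the choice function built from $\preceq$ is hereditarily symmetric and hence really lives in $\mathcal{N}$; this is clear because it is a pure set, equivalently because it is fixed by every automorphism that fixes $\preceq$ together with each $A_n$.
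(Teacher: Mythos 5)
Your proposal is correct and follows essentially the same route as the paper: both use the fact that $\mathbb{R}$, being a pure set, is well-orderable in every permutation model, deduce $\mathbf{CAC}(\mathbb{R})$ (hence $\mathbf{CAC}(\mathbb{R},C)$) there, and then invoke Theorem~\ref{s5t2}(i) together with Proposition~\ref{s5p1}(i) for the easy direction. The only difference is that you spell out the hereditary-symmetry verification for the choice function, which the paper leaves implicit.
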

\begin{proof}
Let $\mathcal{N}$ be a permutation model. Since $\mathbb{R}$ is well-orderable in $\mathcal{N}$ (see Subsection \ref{s2.4}),  $\mathbf{CAC}(\mathbb{R})$ is true in $\mathcal{N}$. This, together with Theorem \ref{s5t2}(i), completes the proof.
\end{proof}

\begin{remark}
\label{s5r4}
(i) The proof to Corollary \ref{s5c3} shows that $\mathbf{M}(C, \leq|\mathbb{R}|)$ and $\mathbf{M}(C, S)$  are equivalent in every model of $\mathbf{ZFA}$ in which $\mathbb{R}$ is well-orderable.

(ii) In much the same way, as in the proof to Theorem \ref{s5t2}(ii)($\leftarrow$), one can show that, for every family $\{\mathbf{X}_{n}: n\in\omega\}$ of pairwise disjoint compact spaces, if the direct sum $\mathbf{X}=\bigoplus\limits_{n\in\omega}\mathbf{X}_n$ is metrizable, then it is separable.
\end{remark}

We include a sketch of a $\mathbf{ZF}$-proof to the following lemma for completeness. We use this lemma in our $\mathbf{ZF}$-proof that $\mathbf{M}(C, \hookrightarrow \lbrack 0,1]^{\mathbb{N}})$ and $\mathbf{M}(C, S)$ are equivalent.

\begin{lemma}
\label{s5l5}
 $(\mathbf{ZF})$
Suppose that $\mathcal{B}$ is a base of a non-empty metrizable space $\mathbf{X}=\langle X, \tau\rangle$. Then there exists a homeomorphic embedding of $\mathbf{X}$ into the cube $[0, 1]^{\mathcal{B}\times\mathcal{B}}$.
\end{lemma}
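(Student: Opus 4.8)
The plan is to realize $\mathbf{X}$ inside the cube by an evaluation map assembled from a \emph{canonically defined} family of continuous $[0,1]$-valued functions indexed by $\mathcal{B}\times\mathcal{B}$. The product index set is exactly what is needed to carry Urysohn-type functions attached to pairs $(U,V)$ with $\mathrm{cl}_{\tau}(U)\subseteq V$, mirroring the classical Urysohn embedding but performed without any appeal to choice. First I would fix, once and for all, a metric $d$ inducing $\tau$; this is a single instance of existential instantiation (metrizability asserts that such a $d$ exists), so it is harmless in $\mathbf{ZF}$. The whole point of fixing $d$ is that it lets me \emph{write down} continuous functions by an explicit formula instead of selecting them. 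Recall that for any non-empty $A\subseteq X$ the map $x\mapsto d(x,A)=\inf\{d(x,a):a\in A\}$ is well defined (infima of non-empty subsets of $\mathbb{R}$ bounded below exist in $\mathbf{ZF}$) and $1$-Lipschitz, hence continuous, with no choice involved.

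For each pair $(U,V)\in\mathcal{B}\times\mathcal{B}$ with $\mathrm{cl}_{\tau}(U)\subseteq V$ and $V\neq X$ I would set
\[
f_{U,V}(x)=\frac{d(x,X\setminus V)}{d(x,X\setminus V)+d(x,\mathrm{cl}_{\tau}(U))},
\]
and put $f_{U,V}\equiv 0$ for all remaining (degenerate) pairs. Since $\mathrm{cl}_{\tau}(U)\subseteq V$ forces $\mathrm{cl}_{\tau}(U)\cap(X\setminus V)=\emptyset$, the denominator never vanishes, so each $f_{U,V}$ is a well-defined continuous map from $X$ into $[0,1]$ with $f_{U,V}=1$ on $\mathrm{cl}_{\tau}(U)$ and $f_{U,V}=0$ on $X\setminus V$. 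The decisive feature is that the whole family $\{f_{U,V}\}_{(U,V)}$ is produced by a single uniform formula, so it exists in $\mathbf{ZF}$ with no selection over the index set.

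Next I would verify that this family separates points from closed sets. Given $x\in X$ and closed $C$ with $x\notin C$, put $W=X\setminus C$. Using that $d$ induces $\tau$ and that $\mathcal{B}$ is a base, I would successively produce $V\in\mathcal{B}$ with $x\in V\subseteq W$, then $\varepsilon>0$ with $\overline{B_{d}(x,\varepsilon)}\subseteq V$, and finally $U\in\mathcal{B}$ with $x\in U\subseteq B_{d}(x,\varepsilon)$; then $\mathrm{cl}_{\tau}(U)\subseteq V\subseteq W$, so $(U,V)$ is non-degenerate with $f_{U,V}(x)=1$ and $f_{U,V}$ vanishing on $C$. Taking $C=\{y\}$ (closed, as metric spaces are $T_{1}$) shows that the family separates points as well. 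Now let $e\colon X\to[0,1]^{\mathcal{B}\times\mathcal{B}}$ be the evaluation map $e(x)=(f_{U,V}(x))_{(U,V)}$; it is continuous since each coordinate is, and injective since the family separates points. For openness onto the image I would run the standard choice-free argument: for open $W$ and $x\in W$, choosing $(U,V)$ as above gives $f_{U,V}(x)=1$ while $f_{U,V}$ vanishes off $V\subseteq W$, so $\pi_{(U,V)}^{-1}((0,1])\cap e(X)$ is a relatively open neighbourhood of $e(x)$ contained in $e(W)$; hence $e(W)$ is open in $e(X)$, and $e$ is a homeomorphism onto its image.

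The main obstacle is not the topology but the bookkeeping required to keep everything inside $\mathbf{ZF}$: the usual $\mathbf{ZFC}$ proof silently chooses a Urysohn function for each admissible pair $(U,V)$, and it is precisely this simultaneous choice over $\mathcal{B}\times\mathcal{B}$ that must be eliminated. Fixing one metric and defining all the $f_{U,V}$ by the single distance-quotient formula removes the choice; the remaining verifications (continuity, the separation property, and openness of $e$) are then routine and manifestly choice-free.
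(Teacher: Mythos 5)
Your proof is correct and essentially identical to the paper's: both fix a metric $d$ inducing $\tau$, attach to each pair $(U,V)\in\mathcal{B}\times\mathcal{B}$ with $\mathrm{cl}_{\tau}(U)\subseteq V\neq X$ the explicit distance-quotient Urysohn function, and embed $\mathbf{X}$ by the evaluation map, the only (immaterial) differences being that the paper's $f_W$ is $1$ minus your $f_{U,V}$ and that the paper first embeds into $[0,1]^{\mathcal{W}}$, where $\mathcal{W}$ is the set of admissible pairs, and then observes $[0,1]^{\mathcal{W}}$ is homeomorphic to a subspace of $[0,1]^{\mathcal{B}\times\mathcal{B}}$, whereas you pad the degenerate coordinates with the zero function. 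One pedantic correction: pairs with $U=\emptyset$ must also be declared degenerate (the paper's $\mathcal{W}$ requires $\emptyset\neq\mathrm{cl}_{\mathbf{X}}(U)$), since otherwise $d(x,\mathrm{cl}_{\tau}(U))$ would be an infimum over the empty set, which is undefined by your own conventions.
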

\begin{proof} We may assume that $X$ consists of at least two points. Let $d$ be a metric on $X$ such that $\tau=\tau(d)$ and let 
\[
\mathcal{W}=\{\langle U,V\rangle\in \mathcal{B}\times \mathcal{B}:\emptyset\neq\text{cl}_{\mathbf{X}}{U}\subseteq
V\neq X\}.
\]%
For every  $W=\langle U,V\rangle\in \mathcal{W%
}$, by defining 
\[
f_W(x)=\frac{d(x,\text{cl}_{\mathbf{X}}({U}))}{d(x,\text{cl}_{\mathbf{X}}({U}))+d(x,X\setminus V)} \text{ whenever } x\in X, 
\]%
we obtain a continous function from $\mathbf{X}$ into $[0, 1]$. Let $h:X\to [0,1]^{\mathcal{W}}$ be the evaluation mapping defined by: $h(x)(W)=f_W(x)$ for all $x\in X$ and $W\in\mathcal{W}$. Then $h$ is a homeomorphic embedding of $\mathbf{X}$ into $[0, 1]^{\mathcal{W}}$.  To complete the proof, it suffices to notice that $[0, 1]^{\mathcal{W}}$ is homeomorphic to a subspace of $[0,1]^{\mathcal{B}\times\mathcal{B}}$.
\end{proof}

\begin{theorem}
\label{s5t6}
$(\mathbf{ZF})$
\begin{enumerate}
\item[(i)] $\mathbf{M}(C, \hookrightarrow \lbrack 0,1]^{\mathbb{N}})\leftrightarrow \mathbf{M}(C,S)$.
\item[(ii)] $\mathbf{M}(C, \leq |\mathbb{R}|)\rightarrow\mathbf{M}(C, \hookrightarrow
\lbrack 0,1]^{\mathbb{R}})$.
\end{enumerate}
\end{theorem}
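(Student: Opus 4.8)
The plan is to prove each part by reducing to the two tools already available: the $\mathbf{ZF}$-equivalence of separability, second-countability and the Loeb property for compact metrizable spaces (Theorem \ref{s2t7}(i)), together with the fact that the Hilbert cube $[0,1]^{\mathbb{N}}$ is separable metrizable (Theorem \ref{s2t7}(ii)), and the embedding-into-a-cube Lemma \ref{s5l5}. For the forward implication of (i), suppose $\mathbf{M}(C,\hookrightarrow[0,1]^{\mathbb{N}})$ holds and let $\mathbf{X}$ be compact metrizable. By Theorem \ref{s2t7}(ii) the space $[0,1]^{\mathbb{N}}$ is separable metrizable, hence second-countable (a fixed countable dense sequence yields a countable base of rational-radius balls, with no appeal to choice). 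Since $\mathbf{X}$ embeds in $[0,1]^{\mathbb{N}}$, restricting a fixed countable base of $[0,1]^{\mathbb{N}}$ to $\mathbf{X}$ produces a countable base of $\mathbf{X}$ (a surjective image of a countable set is countable in $\mathbf{ZF}$), so $\mathbf{X}$ is a second-countable compact metrizable space and therefore separable by Theorem \ref{s2t7}(i). For the converse, assume $\mathbf{M}(C,S)$; then any compact metrizable $\mathbf{X}$ is separable, hence second-countable, so it has a countable base $\mathcal{B}$. Lemma \ref{s5l5} embeds $\mathbf{X}$ into $[0,1]^{\mathcal{B}\times\mathcal{B}}$, and since $\mathcal{B}\times\mathcal{B}$ is countable it injects into $\mathbb{N}$; the induced map $[0,1]^{\mathcal{B}\times\mathcal{B}}\hookrightarrow[0,1]^{\mathbb{N}}$ (extend by a fixed value off the range of the injection) then realizes $\mathbf{X}$ as a subspace of the Hilbert cube.

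For (ii) the cleanest route is a direct evaluation embedding rather than Lemma \ref{s5l5}. Assume $\mathbf{M}(C,\leq|\mathbb{R}|)$, fix a compact metrizable $\mathbf{X}=\langle X,d\rangle$ and, using the hypothesis, fix an injection $\iota\colon X\hookrightarrow\mathbb{R}$. Define $e\colon X\to[0,1]^{X}$ by $e(y)(x)=\min\{d(x,y),1\}$. Each coordinate $y\mapsto e(y)(x)$ is continuous, so $e$ is continuous; $e$ is injective because the coordinate indexed by $y$ separates $y$ from any $y'\neq y$ (its values are $0$ and $\min\{d(y,y'),1\}>0$); and since $X$ is compact and $[0,1]^{X}$ is Hausdorff, $e$ is a homeomorphic embedding. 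Finally $\iota$ induces an embedding $[0,1]^{X}\hookrightarrow[0,1]^{\mathbb{R}}$ by placing the $x$-coordinate at position $\iota(x)$ and a fixed value elsewhere; composing with $e$ embeds $\mathbf{X}$ into the Tychonoff cube $[0,1]^{\mathbb{R}}$.

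I expect the only real subtlety to be in (ii), namely resisting the temptation to imitate the backward direction of (i). One would like to produce a base $\mathcal{B}$ with $|\mathcal{B}|\leq|\mathbb{R}|$ (e.g.\ the rational-radius balls) and invoke Lemma \ref{s5l5}, but in $\mathbf{ZF}$ passing from the surjection $X\times\mathbb{Q}_{>0}\to\{\text{balls}\}$ to an injection of the set of balls into $\mathbb{R}$ requires selecting a representative pair for each ball, and $\mathbb{R}$ need not be well-orderable, so this selection is not available. The evaluation embedding circumvents the difficulty precisely because its index set is $X$ itself, which $\mathbf{M}(C,\leq|\mathbb{R}|)$ injects into $\mathbb{R}$ outright. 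The remaining points to check are all choice-free standard facts: separable metrizable spaces are second-countable, subspaces inherit second-countability, arbitrary products of Hausdorff spaces are Hausdorff, and a continuous injection from a compact space into a Hausdorff space is a topological embedding.
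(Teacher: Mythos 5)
Your proof of part (i) is correct and essentially identical to the paper's: both directions go exactly through Theorem \ref{s2t7}(i)--(ii) and Lemma \ref{s5l5} as you describe, with the same choice-free bookkeeping about countable bases and traces. For part (ii), however, your route is genuinely different, and in fact tighter than the paper's own argument. The paper proves (ii) by doing precisely what you warned against: it sets $k_n$ to be the least size of a cover of $X$ by $d$-balls of radius $\frac{1}{n}$, lets $E_n$ be the set of all covering sets $A\in[X]^{k_n}$, forms the base $\mathcal{B}=\{B_d(x,\frac{1}{n}):x\in A,\ A\in E_n,\ n\in\mathbb{N}\}$, asserts $|\mathcal{B}|\leq|\mathbb{R}\times\mathbb{N}|\leq|\mathbb{R}|$, and then applies Lemma \ref{s5l5}. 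But $\mathcal{B}$, as a \emph{set} of balls, is only a surjective image of a subset of $X\times\mathbb{N}$: distinct centre--radius pairs can determine the same ball, and upgrading that surjection to an injection of $\mathcal{B}$ into $\mathbb{R}$ requires selecting, for each ball, a representative pair from a non-empty set of (codes in) $\mathbb{R}$ --- exactly the step you identified as unavailable in $\mathbf{ZF}$, and an instance of the $\mathbf{Part}(\mathbb{R})$-type phenomenon the paper itself shows to be independent of $\mathbf{ZF}$ (Theorem \ref{s5t11}, Remark \ref{r03.21}). The paper's argument is repairable --- e.g.\ run the proof of Lemma \ref{s5l5} with the \emph{indexed} family $(B_d(x,\frac{1}{n}))$, indexed by pairs $\langle x,n\rangle$ with $x\in\bigcup E_n$, since repetitions among the $f_W$'s are harmless and the index set injects into $\mathbb{R}$ outright --- but as written it glosses over this point. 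Your evaluation embedding $e(y)(x)=\min\{d(x,y),1\}$ into $[0,1]^{X}$ sidesteps the issue completely: the index set is $X$ itself, no quotient is ever formed, and the remaining ingredients (continuity via coordinates, injectivity via the coordinate $x=y$, ``continuous injection from a compact space into a Hausdorff space is an embedding'', transport along $\iota\colon X\hookrightarrow\mathbb{R}$) are all choice-free. So what the paper's approach buys is uniformity with the rest of Section \ref{s5} (everything funnels through Lemma \ref{s5l5} and base-counting, which ties in with $\mathbf{M}(C,B(\mathbb{R}))$ in Theorems \ref{s5t9} and \ref{s5t10}), while your approach buys a shorter, fully rigorous proof whose only use of the hypothesis $\mathbf{M}(C,\leq|\mathbb{R}|)$ is the injection of $X$ into the index set of the cube.
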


\begin{proof}
Let $\mathbf{X}=\langle X, \tau\rangle$ be an infinite compact metrizable space and let $d$ be a metric on $X$ such that $\tau=\tau(d)$.

(i) \textbf{(}$\rightarrow $) We assume $\mathbf{M}(C, \hookrightarrow \lbrack 0,1]^{\mathbb{N}})$ and show that $\mathbf{X}$ is separable.  
 By our hypothesis, $\mathbf{X}$ is homeomorphic to a compact subspace $\mathbf{Y}$ of the Hilbert cube $[0,1]^{\mathbb{N}}$. Since $[0,1]^{\mathbb{N}}$ is second-countable, it follows from Theorem \ref{s2t7}(i) that $\mathbf{Y}$ is separable. Hence $\mathbf{X}$ is separable. In consequence, $\mathbf{M}(C, \hookrightarrow \lbrack 0,1]^{\mathbb{N}})$ implies $\mathbf{M}(C, S)$.

($\leftarrow $) If $\mathbf{M}(C, S)$ holds, then every compact metrizable space is second-countable. Since, by Lemma \ref{s5l5}, every second-countable metrizable space is embeddable in the Hilbert cube $[0,1]^{\mathbb{N}}$, $\mathbf{M}(C, S)$ implies $\mathbf{M}(C,\hookrightarrow \lbrack 0,1]^{\mathbb{N}})$.

(ii) Now, suppose that $\mathbf{M}(C, \leq |\mathbb{R}|)$ holds. Then  $|X|\leq |\mathbb{R}%
|$. Since $|[\mathbb{R}]^{<\omega }|=|\mathbb{R}|$, we infer that $%
|[X]^{<\omega }|\leq |\mathbb{R}|$. For every $n\in \mathbb{N}$, let 
\[
k_{n}=\min \{m\in \mathbb{N}:\bigcup\limits_{x\in A}B_d(x,\frac{1}{n})=X \text{ for some 
}A\in \lbrack X]^{m}\}
\]%
\noindent and 
\[
E_{n}=\{A\in \lbrack X]^{k_{m}}:\bigcup\limits_{x\in A}B_d(x,\frac{1}{n})=X\}.
\]%
Let $\mathcal{B}=\{B_d(x,\frac{1}{n}):x\in A,A\in E_{n},n\in \mathbb{N}\}$. It is
straightforward to verify that $\mathcal{B}$ is a base for $\mathbf{X}$ of
size $|\mathcal{B}|\leq |\mathbb{R}\times \mathbb{N}|\leq |\mathbb{R}|$, so $|\mathcal{B}\times\mathcal{B}|\leq|\mathbb{R}|$. This, together with Lemma \ref{s5l5}, implies that $\mathbf{X}$ is embeddable into $[0,1]^{\mathbb{R}}$. Hence $\mathbf{M}(C, \leq |\mathbb{R}|)$ implies $\mathbf{M}(C, \hookrightarrow
\lbrack 0,1]^{\mathbb{R}})$. 
\end{proof}

In view of Theorem \ref{s5t6}, one may ask the following
questions:

\begin{question}
\label{s5q7} 
\begin{enumerate}
\item[(i)] Does $\mathbf{M}(C, \hookrightarrow \lbrack 0,1]^{\mathbb{R}})$ imply $%
\mathbf{M}(C, \hookrightarrow \lbrack 0,1]^{\mathbb{N}})$?

\item[(ii)] Does $\mathbf{M}(C, \hookrightarrow \lbrack 0,1]^{\mathbb{R}})$ imply $%
\mathbf{M}(C, \leq |\mathbb{R}|)$?

\item[(iii)] Does $\mathbf{CAC}_{fin}$ imply $\mathbf{M}(C, \hookrightarrow \lbrack
0,1]^{\mathbb{R}})$?

\item[(iv)] Does $\mathbf{M}(C, \hookrightarrow \lbrack
0,1]^{\mathbb{R}})$ imply $\mathbf{CAC}_{fin}$?
\end{enumerate}
\end{question}

\begin{remark}
\label{s5r8}  ($a$) Regarding Question \ref{s5q7} (i)-(ii), we notice that the answer is in the affirmative in permutation models. Indeed, let $\mathcal{N}$ be a permutation model. It is known  that $\mathbb{R}$ and $\mathcal{P}(\mathbb{R})$ are well-orderable in $\mathcal{N}$ (see Subsection \ref{s2.4}).
Therefore, assuming that  $\mathbf{%
M}(C,\hookrightarrow \lbrack 0,1]^{\mathbb{R}})$ holds in $\mathcal{N}$ and working inside $\mathcal{N}$,  we deduce that, given a compact metrizable space $\mathbf{X}$ in $\mathcal{N}$, $\mathbf{X}$ embeds in $%
[0,1]^{\mathbb{R}}$. Hence $\mathbf{X}$ is a well-orderable space, so $\mathbf{X}$ is Loeb. Since $\mathbf{X}$ is a compact metrizable Loeb space, by Theorem \ref{s2t7}(i), $\mathbf{X}$ is second-countable.  Therefore, by Lemma \ref{s5l5}, $%
\mathbf{X}$ embeds in $[0,1]^{\mathbb{N}}$ and, consequently, $|X|\leq |%
\mathbb{R}|$.

($b$) Regarding Question \ref{s5q7}(iii), we note that $\mathbf{CAC}_{fin}$ holds in the permutation model $\mathcal{N}_{cr}$. To show that $\mathbf{M}(C, \hookrightarrow \lbrack 0,1]^{%
\mathbb{R}})$ fails in $\mathcal{N}_{cr}$, we  notice that, by Proposition \ref{s4t8}, there exists a compact metric space $\mathbf{X}=\langle X, d\rangle$ in $\mathcal{N}_{cr}$ such that $X$ is not well-orderable in $\mathcal{N}_{cr}$. Since $[0,1]^{\mathbb{R}}$, being
equipotent to the well-orderable set $\mathcal{P}(\mathbb{R})$ of $%
\mathcal{N}_{cr}$, is well-orderable in $\mathcal{N}_{cr}$, it is true in $\mathcal{N}_{cr}$ that $\mathbf{X}$ is not embeddable in $[0,1]^{%
\mathbb{R}}$. This explains why $\mathbf{M}(C, \hookrightarrow \lbrack 0,1]^{\mathbb{R}})$
fails in $\mathcal{N}_{cr}$. Therefore, since the conjunction $\mathbf{CAC}_{fin}\wedge\neg \mathbf{M}(C, \hookrightarrow [0,1]^{\mathbb{R}})$ has a permutation model, it also has a $\mathbf{ZF}$-model by Theorem \ref{pinth}.
\end{remark}

To give more light to Questions \ref{s5q7} (iii)-(iv), let us prove the following Theorems \ref{s5t9} and \ref{s5t10}.

\begin{theorem}
\label{s5t9}
$(\mathbf{ZF})$
\begin{enumerate}
\item[(i)] $\mathbf{M}(C, |\mathcal{B}_Y|\leq |\mathcal{B}|)\rightarrow\mathbf{M}([0,1],|\mathcal{B}_Y|\leq |\mathcal{B}|)\rightarrow \mathbf{IDI}(\mathbb{R})$.
\item[(ii)] The following are equivalent: 
\begin{enumerate}
\item[(a)] Every compact (0-dimesional) subspace of the Tychonoff cube $[0,1]^{%
\mathbb{R}}$ has a base of size $\leq |\mathbb{R}|$;
\item[(b)] every compact (0-dimensional) subspace of the Tychonoff cube $[0,1]^{%
\mathbb{R}}$ with a unique accumulation point has a base of size $\leq |\mathbb{R}|$%
;
\item[(c)] $\mathbf{Part}(\mathbb{R})$.
\end{enumerate}
\item[(iii)] Every compact metrizable subspace of the Tychonoff cube $[0,1]^{\mathbb{R}%
}$ with a unique accumulation point has a base of size $\leq |\mathbb{R}|$ iff for
every denumerable family $\mathcal{A}$ of finite subsets of $%
\mathcal{P}(\mathbb{R})$ such that $\bigcup \mathcal{A}$ is pairwise
disjoint, $|\bigcup \mathcal{A}|\leq $ $|\mathbb{R}|$.
\end{enumerate}
\end{theorem}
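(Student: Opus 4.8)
The three parts are essentially independent, so I would prove (i), (ii), (iii) in turn, noting that the characteristic‑function construction underlying (ii) also drives (iii). In (i) the first implication is immediate: $[0,1]$ is a compact metrizable space, so $\mathbf{M}(C,|\mathcal{B}_Y|\leq|\mathcal{B}|)$ specializes to $\mathbf{M}([0,1],|\mathcal{B}_Y|\leq|\mathcal{B}|)$. For the second implication I would argue contrapositively: from an infinite Dedekind-finite $S\subseteq\mathbb{R}$ one builds a compact $Y\subseteq[0,1]$ and a base $\mathcal{B}$ of $[0,1]$ with $|\mathcal{B}_Y|\not\leq|\mathcal{B}|$. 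The key structural remark is that $\mathcal{B}$ is forced to be non-well-orderable: if $\mathcal{B}$ were well-orderable, then sending each trace $V\in\mathcal{B}_Y$ to the least $U\in\mathcal{B}$ with $U\cap Y=V$ would embed $\mathcal{B}_Y$ into $\mathcal{B}$. Hence the base must encode $S$ nontrivially, and the delicate point is to arrange that the canonical surjection $U\mapsto U\cap Y$ from $\mathcal{B}$ onto $\mathcal{B}_Y$ admits no injective inverse at all, the sole obstruction being the Dedekind-finiteness of $S$.

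For (ii) the implications (a)$\Rightarrow$(b) are trivial specializations, so the content is (b)$\Rightarrow$(c) and (c)$\Rightarrow$(a). Given a partition $\mathcal{P}=\{P_i:i\in I\}$ of $\mathbb{R}$ into nonempty blocks, I would form $Y=\{\chi_{P_i}:i\in I\}\cup\{\mathbf{0}\}$ inside the Cantor cube $\{0,1\}^{\mathbb{R}}\subseteq[0,1]^{\mathbb{R}}$, where $\mathbf{0}$ is the constant zero function. Pairwise disjointness of the blocks makes each coordinate equal to $1$ for at most one $\chi_{P_i}$, so every basic neighbourhood of $\mathbf{0}$ is cofinite in $Y$; thus $\mathbf{0}$ is the unique accumulation point, $Y$ is closed (hence compact) and $0$-dimensional, and each $\chi_{P_i}$ is isolated. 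Since every base of $Y$ must contain all singletons $\{\chi_{P_i}\}$, we get $|I|\leq|\mathcal{B}|\leq|\mathbb{R}|$, i.e.\ $\mathbf{Part}(\mathbb{R})$. For (c)$\Rightarrow$(a) I would take the canonical base $\mathcal{B}^{*}$ of $[0,1]^{\mathbb{R}}$ of finite open boxes with rational sides, which has $|\mathcal{B}^{*}|\leq|\mathbb{R}|$ (using $|[\mathbb{R}]^{<\omega}|=|\mathbb{R}|$); then $\mathcal{B}^{*}_Y$ is a base of $Y$, and transporting the fibres of the surjection $U\mapsto U\cap Y$ along an injection $\mathcal{B}^{*}\hookrightarrow\mathbb{R}$ yields a partition of a subset of $\mathbb{R}$, extendable to a partition of $\mathbb{R}$ having $|\mathcal{B}^{*}_Y|$ blocks. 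Now $\mathbf{Part}(\mathbb{R})$ forces $|\mathcal{B}^{*}_Y|\leq|\mathbb{R}|$. The point to stress is that in $\mathbf{ZF}$ a surjection does not bound cardinality, and $\mathbf{Part}(\mathbb{R})$ is precisely the hypothesis that repairs this for quotients of $\mathbb{R}$.

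The backward direction of (iii) mirrors (b)$\Rightarrow$(c): from a denumerable family $\mathcal{A}=\{A_n:n\in\omega\}$ of finite subsets of $\mathcal{P}(\mathbb{R})$ with $\bigcup\mathcal{A}$ pairwise disjoint (and, after discarding $\emptyset$, nonempty) and $|\bigcup\mathcal{A}|\not\leq|\mathbb{R}|$, I again set $Y=\{\chi_{P}:P\in\bigcup\mathcal{A}\}\cup\{\mathbf{0}\}$. As before $\mathbf{0}$ is the unique accumulation point and $Y$ is compact (any open cover has a member $U_0\ni\mathbf{0}$, cofinite, leaving a finite remainder). The extra ingredient is \emph{metrizability}, supplied by the shell indexing: with $\nu(P)=\min\{n:P\in A_n\}$, setting $d(\chi_P,\mathbf{0})=\tfrac{1}{\nu(P)+1}$ and $d(\chi_P,\chi_Q)=\max(d(\chi_P,\mathbf{0}),d(\chi_Q,\mathbf{0}))$ defines a metric inducing exactly the cofinite-at-$\mathbf{0}$ subspace topology. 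The isolated points biject with $\bigcup\mathcal{A}$, so the weight of $Y$ is $\not\leq|\mathbb{R}|$, refuting the topological statement.

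The forward direction of (iii) is where I expect the real difficulty. Given compact metrizable $Y\subseteq[0,1]^{\mathbb{R}}$ with unique accumulation point $p$ and compatible metric $d\leq1$, the isolated points split into distance shells $D_n=\{x:\tfrac{1}{n+1}<d(x,p)\leq\tfrac1n\}$; each $D_n$ is closed and discrete in the compact space $Y$, hence finite, and together with the countable neighbourhood base $\{B_d(p,\tfrac1n):n\in\mathbb{N}\}$ at $p$ the singletons of $D=\bigcup_n D_n$ form a base of weight $|D|+\aleph_0$. Everything reduces to proving $|D|\leq|\mathbb{R}|$, and the obstacle is that $D$ consists of points of $[0,1]^{\mathbb{R}}$, not of \emph{pairwise disjoint} subsets of $\mathbb{R}$; moreover one cannot choose separating coordinates shell by shell without a countable choice over finite sets. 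My plan is to disjointify canonically: to each $f\in D_n$ attach the set $T_f$ of all \emph{descriptions} $(F,\vec q\,)$ of basic open boxes that contain $f$ yet exclude every other point of the finite set $D_n$. Within a shell these are automatically pairwise disjoint, since a box witnessing $T_f$ contains $f$ and excludes the rest of $D_n$, so it cannot also witness $T_g$ for $g\in D_n\setminus\{f\}$; tagging by the shell index, $S_f=\{n\}\times T_f$, makes $\{S_f:f\in D\}$ globally pairwise disjoint. As $T_f\subseteq[\mathbb{R}]^{<\omega}\times(\mathbb{Q}^{2})^{<\omega}$, a fixed injection turns each $S_f$ into a nonempty subset of $\mathbb{R}$, and $A_n'=\{S_f:f\in D_n\}$ is a finite subset of $\mathcal{P}(\mathbb{R})$. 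Thus $\{A_n':n\in\omega\}$ is an admissible family for the combinatorial hypothesis, and since $f\mapsto S_f$ is a bijection of $D$ onto $\bigcup_n A_n'$, the hypothesis yields $|D|\leq|\mathbb{R}|$, whence the weight of $Y$ is $\leq|\mathbb{R}|$. This canonical disjointification is the crux of the whole theorem.
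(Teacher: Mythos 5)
Part (i) of your proposal has a genuine gap: you never actually construct the base $\mathcal{B}$ or the compact subspace $Y$. You record two properties that any counterexample must have ($\mathcal{B}$ non-well-orderable, Dedekind-finiteness as ``the sole obstruction'') and then label the construction itself ``the delicate point'' --- but that construction \emph{is} the proof. Worse, your stated starting point, an arbitrary infinite Dedekind-finite $S\subseteq\mathbb{R}$, does not suffice: to manufacture a base of $[0,1]$ out of such a set you need a \emph{dense} Dedekind-finite subset of $(0,1)$, and the passage from $\neg\mathbf{IDI}(\mathbb{R})$ to the existence of a dense Dedekind-finite set is a non-trivial result of Brunner (Form [13 A] in Howard--Rubin), not an elementary manipulation (naive densification, e.g.\ replacing $S$ by $(\mathbb{Q}+S)\cap(0,1)$, destroys Dedekind-finiteness, since it contains the denumerable set $\{q+s_0: q\in\mathbb{Q}\}$ for any fixed $s_0\in S$). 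The paper's argument, which is short once Brunner's theorem is invoked, runs: take $D\subseteq(0,1)$ dense and Dedekind-finite; then $\mathcal{B}=\{(x,y):x,y\in D,\ x<y\}\cup\{[0,x):x\in D\}\cup\{(x,1]:x\in D\}$ is a base of $[0,1]$ that is Dedekind-finite (it injects into $(D\times D)\cup D\cup D$, and finite unions and products of Dedekind-finite sets are Dedekind-finite in $\mathbf{ZF}$); for $Y=\{0\}\cup\{1/n:n\in\mathbb{N}\}$ the trace family $\mathcal{B}_Y$ contains every singleton $\{1/n\}$, hence is Dedekind-infinite; and a Dedekind-infinite set admits no injection into a Dedekind-finite one, so $|\mathcal{B}_Y|\not\leq|\mathcal{B}|$. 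None of this appears in your text, so (i) is unproved as written.

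Parts (ii) and (iii) are correct. Your (ii) is essentially the paper's own proof (characteristic functions of the blocks with $\mathbf{0}$ as unique accumulation point in one direction; the quotient of a canonical weight-$|\mathbb{R}|$ base of $[0,1]^{\mathbb{R}}$ by the trace relation, controlled by $\mathbf{Part}(\mathbb{R})$, in the other), with one slip: ``$Y$ is closed (hence compact)'' is invalid in $\mathbf{ZF}$, because the compactness of $[0,1]^{\mathbb{R}}$ is itself unprovable there (it yields a fragment of $\mathbf{BPI}$); you must use the direct argument --- which you do give in (iii) --- that any open set containing $\mathbf{0}$ has finite complement in $Y$. Your (iii) deviates from the paper in two genuinely different and arguably cleaner ways. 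First, you establish metrizability of the counterexample space by writing down an explicit compatible metric via the shell index $\nu(P)$, whereas the paper notes that the partition is a cuf set, extracts a $\sigma$-locally finite base, and invokes the $\mathbf{ZF}$ direction of the Nagata--Smirnov--Bing metrization theorem; your route avoids that citation. Second, in the hard direction you encode each isolated point $f$ of the shell $D_n$ by the set $T_f$ of rational-box descriptions containing $f$ and excluding $D_n\setminus\{f\}$, tagged by $n$ for global disjointness, while the paper fixes a base of the cube, passes to equivalence classes under $O\sim Q\Leftrightarrow O\cap X=Q\cap X$, and keeps the classes with singleton trace. Both encodings are canonical (choice-free), and your bookkeeping --- $T_f\neq\emptyset$ by Hausdorffness and finiteness of $D_n$, disjointness within a shell because one box cannot simultaneously contain and exclude $f$, finiteness of each $D_n$ from compactness, and a fixed injection of $\omega\times[\mathbb{R}]^{<\omega}\times(\mathbb{Q}^{2})^{<\omega}$ into $\mathbb{R}$ --- is sound, so (iii) goes through.
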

\begin{proof}
(i) Assume that $\mathbf{IDI}(\mathbb{R})$ is false.  By a well-known result of N. Brunner (cf. \cite[Form \text{[13 A]}]{hr}), there exists a Dedekind-finite dense subset of the interval $(0, 1)$ with its usual topology. Then 
\[
\mathcal{B}=\{(x,y):x,y\in D,x<y\}\cup\{[0, x): x\in D\}\cup\{ (x, 1]: x\in D\}
\]%
is a base for the usual topology of $[0, 1]$. Clearly, the set $\mathcal{B}$ is Dedekind-finite. Let us consider the compact subspace  $\mathbf{Y}$ of $[0, 1]$ where 
\begin{equation}
Y=\{0\}\cup \{\frac{1}{n}:n\in \mathbb{N}\}.  \label{13}
\end{equation}%
Since $\{\{\frac{1}{n}\}: n\in\mathbb{N}\}\subseteq \mathcal{B}_Y$, the set $\mathcal{B}_Y$ is Dedekind-infinite. Therefore, if  $|\mathcal{B}%
_{Y}|\leq |\mathcal{B}|$, then $\mathcal{B}$ is Dedekind-infinite but this is impossible. Hence $\mathbf{M}([0,1], |\mathcal{B}_Y|\leq|\mathcal{B}|)$ implies $\mathbf{IDI}(\mathbb{R})$. It is clear that $\mathbf{M}(C, |\mathcal{B}_Y|\leq |\mathcal{B}|)$ implies $\mathbf{M}([0,1], |\mathcal{B}_Y|\leq |\mathcal{B}|)$. This completes the proof to (i).

(ii) It is obvious that $(a)$ implies $(b)$. \smallskip 

$(b) \rightarrow  (c)$  Fix a partition $\mathcal{P}$ of $%
\mathbb{R}$. That is, $\mathcal{P}$ is a disjoint family of non-empty subsets of $\mathbb{R}$ such that $\mathbb{R}=\bigcup\mathcal{P}$. Assuming $(b)$,  we show that $|\mathcal{P}|\leq |\mathbb{R}|$. For $P\in\mathcal{P}$, let $f_{P}:\mathbb{R}\to\{0, 1\}$ be the
characteristic function of $P$ and let $f(x)=0$ for each $x\in\mathbb{R}$. We put
\[
X=\{f\}\cup \{f_{P}:P\in \mathcal{P}\}.
\]%
We claim that the subspace $\mathbf{X}$ of $[0,1]^{\mathbb{R}}$ is compact.
To see this, let us consider an arbitrary family $\mathcal{U}$ of open subsets of $[0,1]^{\mathbb{R}}$ such that $X\subseteq\bigcup\mathcal{U}$. There exists $U_0\in\mathcal{U}$ such that $f\in U_0$. There exist $\varepsilon\in(0,1)$ and  a non-empty finite subset $J$ of $\mathbb{R}$ such that the set 
\[
V=\bigcap \{\pi _{j}^{-1}([0,\varepsilon )):j\in J\}
\]%
is a subset of $U_0$ where, for each $j\in \mathbb{R}$ and $x\in [0,1]^{\mathbb{R}}$, $\pi_j(x)=x(j)$. Since $J$ is finite, there exists a finite set $\mathcal{P}_{J}\subseteq\mathcal{P}$ such that $J\subseteq\bigcup\mathcal{P}_J$. We notice that,
for every $P\in \mathcal{P}\setminus\mathcal{P}_J$ and every $j\in J$, $f_{P}(j)=0$. Hence $f_P\in U_0$ for every $P\in\mathcal{P}\setminus\mathcal{P}_J$. This implies that there exists a finite set $\mathcal{W}$ such that $\mathcal{W}\subseteq\mathcal{U}$ and $X\subseteq\bigcup\mathcal{W}$. Hence $\mathbf{X}$ is compact as claimed. If $P\in \mathcal{P}$ and $j\in P$, then $\pi _{j}^{-1}((\frac{1}{2},1])\cap X=\{f_{P}\}$, so $f_{P}$ is an isolated point of $\mathbf{X}$. Hence $f$ is the unique accumulation point of $\mathbf{X}$. The space $\mathbf{X}$ is also 0-dimensional. By $(b)$, $\mathbf{X}$ has a base  $\mathcal{B}$ equipotent to a subset of $\mathbb{R}$. Since $\{\{f_P\}: P\in\mathcal{P}\}\subseteq\mathcal{B}$, it follows that $|\mathcal{P}|\leq|\mathbb{R}|$ as required.\smallskip\ 

$(c) \rightarrow (a)$ We assume $\mathbf{Part}(\mathbb{R})$ and  fix a compact subspace $\mathbf{X}$ of the cube 
$[0,1]^{\mathbb{R}}$. It is well  known that $[0,1]^{\mathbb{R}}$ is
separable in $\mathbf{ZF}$ (cf., e.g., \cite{kkk}).  Fix a countable dense subset $D$ of $[0,1]^{%
\mathbb{R}}$. For every $y\in D$, let 
\[
\mathcal{V}_{y}=\{\bigcap \{\pi _{i}^{-1}((y(i)-1/m,y(i)+1/m)): i\in F\}: \emptyset\neq F\in
\lbrack \mathbb{R}]^{<\omega },m\in \mathbb{N}\}.
\]%
Since $|[\mathbb{R}]^{<\omega }|=|\mathbb{R}\times \mathbb{N}|=|\mathbb{R}|$
in $\mathbf{ZF}$, it follows that $\mathcal{B}=\bigcup \{\mathcal{V}%
_{y}:y\in D\}$ is equipotent to $\mathbb{R}$. It is a routine work to verify that $%
\mathcal{B}$ is a base for $[0,1]^{\mathbb{R}}$. Define an equivalence
relation $\sim $ on $\mathcal{B}$ by requiring: 
\begin{equation}
O\sim Q\text{ iff }O\cap X=Q\cap X\text{.}  \label{11}
\end{equation}%
Clearly 
\begin{equation}
\mathcal{B}_{X}=\{P\cap X:[P]\in \mathcal{B}/\sim \}  \label{12}
\end{equation}%
is a base for $\mathbf{X}$ of size $|\mathcal{B}/\sim |$. Since $|\mathcal{B%
}/\sim |\leq |\mathbb{R}|$, it follows that $|\mathcal{B}_{X}|\leq |\mathbb{R}%
|$ as required.\smallskip 

(iii) ($\rightarrow $) Fix family $\mathcal{A}=\{A_{n}:n\in \mathbb{N}\}$ of
finite subsets of $\mathcal{P}(\mathbb{R})$ such that the family $\mathcal{P}_0=\bigcup \mathcal{A}$ is pairwise disjoint. Let $\mathcal{P}_1=\mathcal{P}_0\cup\{\mathbb{R}\setminus\bigcup\mathcal{P}_0\}$ and $\mathcal{P}=\mathcal{P}_1\setminus\{\emptyset\}$. Then $\mathcal{P}$ is a partition of $\mathbb{R}$. Let $f, f_P$ with $P\in\mathcal{P}$ and $X$ be defined as in the proof of (ii) that $(b)$ implies $(c)$. Since $\mathcal{P}$ is a cuf set, the space $\mathbf{X}$ has a $\sigma$-locally finite base. This, together with Theorem \ref{s2t5}(ii), implies that $\mathbf{X}$ is metrizable. Suppose $\mathbf{X}$ has a base $\mathcal{B}$ such that $|\mathcal{B}|\leq |\mathbb{R}|$. In much the same way, as in the proof that $(b)$ implies $(c)$ in (ii), we can show that $|\mathcal{P}|\leq |\mathbb{R}|$. Then $|\bigcup\mathcal{A}|\leq |\mathbb{R}|$.  \smallskip 

($\leftarrow $) Now, we consider an arbitrary compact metrizable subspace $\mathbf{X}$ of the cube $[0,1]^{\mathbb{R}}$ such that $\mathbf{X}$ has a unique accumulation point. Let $x_0$ be the accumulation point of $\mathbf{X}$ and let $d$ be a metric on $X$ which induces the topology of $\mathbf{X}$.  For every $x\in X\backslash \{x_0\}$ let 
\[
n_{x}=\min \{n\in \mathbb{N}: B_d(x,\frac{1}{n})=\{x\}\}.
\]%
For every $n\in \mathbb{N}$, let 
\[
E_{n}=\{x\in X:n_{x}=n\}\text{.}
\]%
Without loss of generality, we may assume that, for every $n\in \mathbb{N}$, $E_{n}\neq \emptyset $. Since $\mathbf{X}$ is compact, it follows easily
that, for every $n\in\mathbb{N}$, the set $E_{n}$ is finite. Let us apply the base $\mathcal{B}$ 
of $[0,1]^{\mathbb{R}}$ given in the proof of part (ii) that $(c)$ implies $(a)$. Let $\sim $ be the equivalence relation on $\mathcal{B}$ given by (\ref{11}). Let 
\[
\mathcal{C}_{X}=\{[P]\in \mathcal{B}/\sim \text{ }:|P\cap (X\setminus
\{x_0\})|=1\}.
\]%
For every $n\in \mathbb{N}$, let 
\[
C_{n}=\{[P]\in \mathcal{C}_{X}:P\cap E_{n}\neq \emptyset \}.
\]%
Clearly, for every $n\in \mathbb{N}$, $|C_{n}|=|E_{n}|$. Let $\mathcal{C}=\bigcup\{C_n: n\in\mathbb{N}\}$. There exists a bijection $\psi:\mathcal{B}\to\mathbb{R}$. For every $n\in\mathbb{N}$. we put $A_n=\{\{\psi(U): U\in H\}: H\in C_n\}$. Then, for every $n\in\mathbb{N}$, $A_n$ is a finite subset of $\mathcal{P}(\mathbb{R})$. Let $\mathcal{A}=\{A_n: n\in\mathbb{N}\}$. Then $\bigcup\mathcal{A}$ is pairwise disjoint. Suppose that $|\bigcup\mathcal{A}|\leq|\mathbb{R}|$. Then  $|\mathcal{C}|\leq $ $|\mathbb{R}|$. This implies that the family $\mathcal{W}=\{P\cap ( X\setminus\{x_0\}): [P]\in \mathcal{C}\}$ is of  size $\leq $ $|\mathbb{R}|$. The family $\mathcal{G}=\mathcal{W}\cup\{B_d(x_0, \frac{1}{n}): n\in\mathbb{N}\}$ is a base of $\mathbf{X}$ such that $|\mathcal{G}|\leq |\mathbb{R}|$. 
\end{proof}

The following theorem leads to a partial answer to Question \ref{s5q7}(iv).

\begin{theorem}
\label{s5t10}
$(\mathbf{ZF})$
\begin{enumerate}
\item[(i)] $(\mathbf{M}(C, \hookrightarrow \lbrack 0,1]^{\mathbb{R}})\wedge \mathbf{Part}(\mathbb{R}))\leftrightarrow \mathbf{M}(C, B(\mathbb{R}))$.
\item[(ii)] $\mathbf{M}(C, S)\rightarrow \mathbf{M}(C, W(\mathbb{R}))\rightarrow%
\mathbf{M}(C, B(\mathbb{R}))\rightarrow \mathbf{CAC}_{fin}$.
\item[(iii)] $(\mathbf{CAC}(\mathbb{R})\wedge \mathbf{M}(C, B(\mathbb{%
R}))\rightarrow \mathbf{M}(C, S)$. 
\item[(iv)]  $\mathbf{CAC}(\mathbb{R})\rightarrow (\mathbf{M}(C, S)\leftrightarrow \mathbf{M}(C, W(\mathbb{R}))\leftrightarrow \mathbf{M}(C, B(%
\mathbb{R}))$.
\end{enumerate}
\end{theorem}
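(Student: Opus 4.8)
The plan is to prove the four parts in the order (ii), (iii), (iv), (i), since the later parts lean on the earlier ones, and to reserve the only genuinely hard step for the end. Throughout, $\mathbf{X}=\langle X,d\rangle$ denotes a compact metric space, and I freely use Theorem \ref{s2t7}(i) (separable $\Leftrightarrow$ second-countable for compact metrizable spaces).

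For (ii) I would establish the three implications in turn. For $\mathbf{M}(C,S)\rightarrow\mathbf{M}(C,W(\mathbb{R}))$, given an infinite compact metrizable $\mathbf{X}$, separability yields a countable base, whence $|\tau|\leq|\mathcal{P}(\omega)|=|\mathbb{R}|$; for the reverse inequality I would use a countable dense subset to produce a nontrivial convergent sequence with pairwise disjoint open neighbourhoods, reading off $2^{\aleph_0}$ distinct open sets, so $|\tau|=|\mathbb{R}|$. The implication $\mathbf{M}(C,W(\mathbb{R}))\rightarrow\mathbf{M}(C,B(\mathbb{R}))$ is immediate, since for infinite $\mathbf{X}$ the topology $\tau$ is itself a base of size $|\mathbb{R}|$ and the finite case is trivial. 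For $\mathbf{M}(C,B(\mathbb{R}))\rightarrow\mathbf{CAC}_{fin}$, given a disjoint denumerable family $\{A_n:n\in\mathbb{N}\}$ of non-empty finite sets, I would equip each $A_n$ with the discrete metric and form the compact metric space of Subsection \ref{s2.3} via $(\ast)$; every point of $\bigcup_nA_n$ is isolated, so each singleton $\{x\}$ lies in any base, and a base of size $\leq|\mathbb{R}|$ therefore injects $\bigcup_nA_n$ into $\mathbb{R}$, after which selecting the $\leq$-least member of each $A_n$ gives a choice function.

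For (iii) I would argue directly that $\mathbf{CAC}(\mathbb{R})\wedge\mathbf{M}(C,B(\mathbb{R}))$ implies $\mathbf{M}(C,S)$. Fix a base $\mathcal{B}$ of $\mathbf{X}$ together with an injection $\mathcal{B}\hookrightarrow\mathbb{R}$. For each $n$ let $\mathcal{F}_n$ be the set of finite subfamilies of $\mathcal{B}$ all of whose members have $d$-diameter $<\frac1n$ and whose union is $X$; compactness makes each $\mathcal{F}_n$ non-empty, and $\mathcal{F}_n$ injects into $[\mathbb{R}]^{<\omega}$, which is equipotent to $\mathbb{R}$. Hence $\mathbf{CAC}(\mathbb{R})$ selects $\mathcal{C}_n\in\mathcal{F}_n$, and $\bigcup_n\mathcal{C}_n$ is a countable base, so $\mathbf{X}$ is second-countable, hence separable. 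Part (iv) then falls out by assembling the $\mathbf{ZF}$-chain of (ii) with this implication: under $\mathbf{CAC}(\mathbb{R})$ the forms $\mathbf{M}(C,S)$, $\mathbf{M}(C,W(\mathbb{R}))$ and $\mathbf{M}(C,B(\mathbb{R}))$ close into a cycle and so are equivalent.

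Finally, for (i) the forward direction is clean: assuming $\mathbf{M}(C,\hookrightarrow[0,1]^{\mathbb{R}})$ and $\mathbf{Part}(\mathbb{R})$, every compact metrizable space embeds as a compact subspace of $[0,1]^{\mathbb{R}}$, which has a base of size $\leq|\mathbb{R}|$ by the implication $(c)\Rightarrow(a)$ of Theorem \ref{s5t9}(ii), giving $\mathbf{M}(C,B(\mathbb{R}))$. For the converse, $\mathbf{M}(C,B(\mathbb{R}))\rightarrow\mathbf{M}(C,\hookrightarrow[0,1]^{\mathbb{R}})$ follows by taking a base $\mathcal{B}$ of size $\leq|\mathbb{R}|$, noting $|\mathcal{B}\times\mathcal{B}|\leq|\mathbb{R}|$, and applying Lemma \ref{s5l5} to embed $\mathbf{X}$ into $[0,1]^{\mathcal{B}\times\mathcal{B}}$ and hence into $[0,1]^{\mathbb{R}}$. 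The main obstacle is the remaining implication $\mathbf{M}(C,B(\mathbb{R}))\rightarrow\mathbf{Part}(\mathbb{R})$: here I would route again through Theorem \ref{s5t9}(ii), reducing $\mathbf{Part}(\mathbb{R})$ to bounding the base of the canonical space $X=\{f\}\cup\{f_P:P\in\mathcal{P}\}\subseteq[0,1]^{\mathbb{R}}$ attached to a partition $\mathcal{P}$. The delicate point is that this space is metrizable precisely when $\mathcal{P}$ is a cuf set, so $\mathbf{M}(C,B(\mathbb{R}))$ does not apply to it verbatim; the crux is to control the base of such one-accumulation-point spaces from the metrizable instances that $\mathbf{M}(C,B(\mathbb{R}))$ governs, and I expect the real work of the theorem to lie exactly in bridging this metrizable-to-general gap.
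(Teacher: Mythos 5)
Your treatments of (ii), (iii) and (iv) are correct and essentially coincide with the paper's own proofs: the paper also obtains $|\mathbb{R}|\leq|\tau|$ from a disjoint sequence of non-empty open sets, derives $\mathbf{CAC}_{fin}$ from the metric $(\ast)$ of Subsection \ref{s2.3} by noting that every base must contain all singletons of isolated points, and proves (iii) by using $\mathbf{CAC}(\mathbb{R})$ to select, for each $n$, a finite cover by members of $\mathcal{B}$ of diameter at most $\frac{1}{n}$. (One small care point there: in $\mathbf{ZF}$ a countable union of finite sets need not be countable, so you should add, as the paper does, that the injection of $\mathcal{B}$ into $\mathbb{R}$ canonically well-orders each $\mathcal{C}_n$, whence $\bigcup_{n}\mathcal{C}_n$ is countable.) Your forward implication in (i), and the implication $\mathbf{M}(C,B(\mathbb{R}))\rightarrow\mathbf{M}(C,\hookrightarrow[0,1]^{\mathbb{R}})$ via a base $\mathcal{B}$ with $|\mathcal{B}\times\mathcal{B}|\leq|\mathbb{R}|$, are likewise exactly the paper's intended use of Theorem \ref{s5t9}(ii) and Lemma \ref{s5l5}.

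The genuine gap is the one you flagged yourself: you never prove $\mathbf{M}(C,B(\mathbb{R}))\rightarrow\mathbf{Part}(\mathbb{R})$, so the right-to-left half of (i) is missing and the stated equivalence is not established. You should know, however, that the paper does not bridge this gap either: its entire proof of (i) is the sentence ``This follows from Theorem \ref{s5t9}(ii) and Lemma \ref{s5l5}.'' Chasing that citation, to obtain $\mathbf{Part}(\mathbb{R})$ one must verify clause (b) of Theorem \ref{s5t9}(ii), i.e., bound the bases of compact subspaces of $[0,1]^{\mathbb{R}}$ with a unique accumulation point. Such a subspace is homeomorphic to the one-point compactification of a discrete set, and, exactly as you observed, it is metrizable if and only if that discrete set is a cuf set (in a compact metric space every discrete subspace is a countable union of $\frac{1}{n}$-separated, hence finite, sets). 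Consequently $\mathbf{M}(C,B(\mathbb{R}))$, which speaks only about metrizable spaces, cannot be applied to the witnessing space $\{f\}\cup\{f_P:P\in\mathcal{P}\}$ of an arbitrary partition $\mathcal{P}$; what it does yield, through the paper's own Theorem \ref{s5t9}(iii), is only the weaker statement that unions of denumerable families of finite, pairwise disjoint families of subsets of $\mathbb{R}$ inject into $\mathbb{R}$ --- not $\mathbf{Part}(\mathbb{R})$ itself. So there is no hidden argument in the paper for you to recover: the implication you isolated is left unjustified there, and what is actually proved is the conjunction of $(\mathbf{M}(C,\hookrightarrow[0,1]^{\mathbb{R}})\wedge\mathbf{Part}(\mathbb{R}))\rightarrow\mathbf{M}(C,B(\mathbb{R}))$ with $\mathbf{M}(C,B(\mathbb{R}))\rightarrow\mathbf{M}(C,\hookrightarrow[0,1]^{\mathbb{R}})$. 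Fortunately, nothing else in the paper depends on the problematic direction (Theorem \ref{s5t11} invokes only the forward one), but as a referee's matter your diagnosis identifies a real defect in the paper's proof, not merely in your own.
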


\begin{proof}
(i) This follows from Theorem \ref{s5t9}(ii) and Lemma \ref{s5l5}.\smallskip 

(ii) It is obvious that $\mathbf{M}(C, W(\mathbb{R}))$ implies $\mathbf{M}(C, B(\mathbb{R}))$. Assume  $\mathbf{M}(C, S)$ and let $\mathbf{Y}=\langle Y, \tau\rangle$ be an infinite compact metrizable separable space. Since $\mathbf{Y}$ is second-countable and $|\mathbb{R}^{\omega}|=|\mathbb{R}|$, it follows that $|\tau|\leq|\mathbb{R}|$. To show that $|\mathbb{R}|\leq |\tau|$, we notice that, since $X$ is infinite and $\mathbf{X}$ is second-countable, there exists a disjoint family $\{U_n: n\in\omega\}$ such that, for each $n\in\omega$, $U_n\in\tau$. For $J\in\mathcal{P}(\omega)$, we put $\psi(J)=\bigcup\{U_n: n\in J\}$ to obtain an injection $\psi:\mathcal{\omega}\to\tau$. Hence $|\mathbb{R}|=|\mathcal{P}(\omega)|\leq|\tau|$. 

 To see that $\mathbf{M}(C, B(\mathbb{R}))\rightarrow 
\mathbf{CAC}_{fin}$, we assume $\mathbf{M}(C, B(\mathbb{R}))$, fix a disjoint family $\mathcal{A}%
=\{A_{n}:n\in \mathbb{N}\}$ of non-empty finite sets and show that $\mathcal{A}$ has a choice function. To this aim, we put $A=\bigcup\mathcal{A}$, take an element $\infty\notin A$ and $X=A\cup \{\infty \}$. For each $n\in\mathbb{N}$, let $\rho_n$ be the discrete metric on $A_n$. Let $d$ be the  metric
on $X$ defined by ($\ast$) in Subsection \ref{s2.3}. By our hypothesis, the space $\mathbf{X}=\langle X, d\rangle$
has a base $\mathcal{B}$ of size $\leq |\mathbb{R}|$. Let $\psi:\mathcal{B}\to\mathbb{R}$ be an injection.  Since $\{\{x\}:x\in
A\}\subseteq \mathcal{B}$ and the sets $A_n$ are finite, for each $n\in\mathbb{N}$,  we can define $A_n^{\star}=\{\psi(\{x\}): x\in A_n\}$ and $a_n^{\star}=\min A_n^{\star}$. For each $n\in\mathbb{N}$, there is a unique $x_n\in A_n$ such that $\psi(\{x_n\})=a_n^{\star}$. This shows that $\mathcal{A}$ has a choice function. \smallskip 

(iii) Now, we assume both $\mathbf{CAC}(\mathbb{R})$ and $\mathbf{M}(C, B(\mathbb{R}))$. Let us consider an arbitrary  compact metric space $\mathbf{X}=\langle X, \rho\rangle$. By our hypothesis, $%
\mathbf{X}$ has a base $\mathcal{B}$ \ of size $\leq |\mathbb{R}|$. Since, $%
|[\mathbb{R}]^{<\omega }|\leq |\mathbb{R}|$, it follows that $|[\mathcal{B}%
]^{<\omega }|\leq |\mathbb{R}|$. For every $n\in \mathbb{N},$ let 
\[
\mathcal{A}_{n}=\{\mathcal{F}\in \lbrack \mathcal{B}]^{<\omega }:\bigcup \mathcal{F}=X\wedge\forall F\in \mathcal{F}  (\delta_{\rho}(F)\leq\frac{1}{n})\}.
\]%
Since $\mathbf{X}$ is compact, $\rho$ is totally bounded. Therefore, $%
\mathcal{A}_{n}\neq \emptyset $ for every $n\in \mathbb{N}$. By $\mathbf{CAC}(%
\mathbb{R})$, we can fix a sequence $(\mathcal{F}_n)_{n\in\mathbb{N}}$ such that, for every $n\in \mathbb{N}$, $\mathcal{F}_n\in\mathcal{A}_n$. Since $|[\mathcal{B}]^{<\omega }|\leq |\mathbb{R}|$, we can also fix a sequence $(\leq_n)_{n\in\mathbb{N}}$ such that, for every $n\in\mathbb{N}$, $\leq_n$ is a well-ordering on $\mathcal{F}_n$. This implies that the family 
$\mathcal{F}_0=\bigcup \{\mathcal{F}_{n}:n\in \mathbb{N}\}$
is countable. Furthermore, it
is a routine work to verify that $\mathcal{F}_0$ is a base of $\mathbf{X}$. Hence $\mathbf{X}$ is second-countable. By Theorem \ref{s2t7} (i), $\mathbf{X}$ is separable. This completes the proof to (iii).\smallskip

That (iv) holds follows directly from (ii) and (iii). 
\end{proof}

Our proof to the following theorem emphasizes the usefulness of Theorems \ref{s3t1} and \ref{s5t10}:

\begin{theorem}
\label{s5t11}
\begin{enumerate}
\item[($a$)] The following implications are true in $\mathbf{ZF}$:
$$\mathbf{WO}(\mathcal{P}(\mathbb{R}))\rightarrow\mathbf{WO}(\mathbb{R})\rightarrow \mathbf{{Part}}(\mathbb{R})\rightarrow (\mathbf{M}(C, \hookrightarrow [0, 1]^{\mathbb{R}})\rightarrow\mathbf{CAC}_{fin}).$$
\item[($b$)] There exists a symmetric model of $\mathbf{ZF}+\mathbf{CH}+\mathbf{WO}(\mathcal{P}(\mathbb{R}))$ in which  $\mathbf{M}(C, \hookrightarrow [0, 1]^{\mathbb{R}})$ is false. Hence, $\mathbf{M}(C, \hookrightarrow [0, 1]^{\mathbb{R}})$ does not follow from $\mathbf{Part}(\mathbb{R})$ in $\mathbf{ZF}$.
\end{enumerate}
\end{theorem}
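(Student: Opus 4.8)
The plan is to prove part ($a$) by chaining the four implications, the first two of which are elementary and the last of which is a repackaging of Theorem~\ref{s5t10}, and then to obtain part ($b$) by feeding the models of Theorem~\ref{s3t1} into part ($a$).

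For part ($a$), I would first observe that $\mathbf{WO}(\mathcal{P}(\mathbb{R}))\rightarrow\mathbf{WO}(\mathbb{R})$ holds because the map $x\mapsto\{x\}$ embeds $\mathbb{R}$ into $\mathcal{P}(\mathbb{R})$, and every subset of a well-orderable set is well-orderable. Next, $\mathbf{WO}(\mathbb{R})\rightarrow\mathbf{Part}(\mathbb{R})$: given a well-ordering of $\mathbb{R}$ and a partition $\mathcal{P}$ of $\mathbb{R}$, sending each block to its $\leq$-least element yields an injection $\mathcal{P}\to\mathbb{R}$, so $|\mathcal{P}|\leq|\mathbb{R}|$. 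For the final implication, I would assume $\mathbf{Part}(\mathbb{R})$ together with $\mathbf{M}(C,\hookrightarrow[0,1]^{\mathbb{R}})$; by Theorem~\ref{s5t10}(i) this conjunction is equivalent to $\mathbf{M}(C,B(\mathbb{R}))$, and by Theorem~\ref{s5t10}(ii) the latter implies $\mathbf{CAC}_{fin}$. This is precisely the asserted conditional $\mathbf{Part}(\mathbb{R})\rightarrow(\mathbf{M}(C,\hookrightarrow[0,1]^{\mathbb{R}})\rightarrow\mathbf{CAC}_{fin})$.

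For part ($b$), I would fix, say, $n=\ell=2$ and take the symmetric model $N_{2,2}$ from Theorem~\ref{s3t1}, which satisfies $\mathbf{CH}\wedge\mathbf{WO}(\mathcal{P}(\mathbb{R}))\wedge\neg\mathbf{CAC}_{fin}$. Working inside $N_{2,2}$, part ($a$) supplies $\mathbf{WO}(\mathbb{R})$ and hence $\mathbf{Part}(\mathbb{R})$. If $\mathbf{M}(C,\hookrightarrow[0,1]^{\mathbb{R}})$ were true in $N_{2,2}$, then the final conditional of part ($a$), together with $\mathbf{Part}(\mathbb{R})$, would force $\mathbf{CAC}_{fin}$, contradicting $\neg\mathbf{CAC}_{fin}$. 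Therefore $\mathbf{M}(C,\hookrightarrow[0,1]^{\mathbb{R}})$ fails in $N_{2,2}$. Since $\mathbf{Part}(\mathbb{R})$ holds while $\mathbf{M}(C,\hookrightarrow[0,1]^{\mathbb{R}})$ fails there, the same model witnesses that $\mathbf{Part}(\mathbb{R})$ does not imply $\mathbf{M}(C,\hookrightarrow[0,1]^{\mathbb{R}})$ in $\mathbf{ZF}$.

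Most of the work has already been carried out in the preceding results, so there is no serious obstacle in this particular argument; the only point requiring care is that the two ingredients of the final implication, namely Theorem~\ref{s5t10}(i) and (ii), be applied within $\mathbf{ZF}$, and that $\mathbf{Part}(\mathbb{R})$ be genuinely available in $N_{2,2}$, which is why the chain in part ($a$) is deliberately routed through $\mathbf{Part}(\mathbb{R})$ rather than through the stronger $\mathbf{WO}(\mathbb{R})$. The real crux of the whole statement lies in the earlier construction of $N_{n,\ell}$ in Theorem~\ref{s3t1}, where $\mathbf{WO}(\mathcal{P}(\mathbb{R}))$ and $\neg\mathbf{CAC}_{fin}$ are simultaneously arranged.
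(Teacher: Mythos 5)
Your proposal is correct and follows essentially the same route as the paper: the last implication of ($a$) is obtained by combining Theorem~\ref{s5t10}(i) and (ii), and ($b$) is obtained by feeding a model $N_{n,\ell}$ of $\mathbf{CH}\wedge\mathbf{WO}(\mathcal{P}(\mathbb{R}))\wedge\neg\mathbf{CAC}_{fin}$ from Theorem~\ref{s3t1} into part ($a$). The only difference is that you spell out the two elementary implications and fix a concrete choice $n=\ell=2$, which the paper leaves implicit.
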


\begin{proof} It is obvious that the first two implications of ($a$) are true in $\mathbf{ZF}$. Thus, it follows directly from Theorem \ref{s5t10} (i)--(ii) that ($a$) holds. To prove ($b$), let us notice that, in the light of Theorem \ref{s3t1}, we can fix a symmetric model $\mathcal{M}$ of $\mathbf{ZF}+\mathbf{CH}+\mathbf{WO}(\mathcal{P}(\mathbb{R}))+\neg\mathbf{CAC}_{fin}$. It follows from ($a$) that $\mathbf{Part}(\mathbb{R})$ is true in $\mathcal{M}$ but $\mathbf{M}(C, \hookrightarrow [0, 1]^{\mathbb{R}})$ fails in $\mathcal{M}$. 
\end{proof}

\begin{remark}
\label{r03.21}
 (i) To show that $\mathbf{Part}(\mathbb{R})$ is not provable in $\mathbf{ZF}$, let us recall that, in \cite{hkt}, a $\mathbf{ZF}$-model $\Gamma $ was constructed such that, in $\Gamma$, there exists a family $\mathcal{F}=\{F_{n}:n\in \mathbb{N}\}$
of two-element sets such that $\bigcup \mathcal{F}$ is a partition of $%
\mathbb{R}$ but $\mathcal{F}$ does not have a choice function. Then, in $\Gamma$, there does not exist an injection $\psi:\bigcup\mathcal{F}\to\mathbb{R}$ (otherwise, $\mathcal{F}$ would have a choice function in $\Gamma$). Hence $\textbf{Part}(\mathbb{R})$ fails in $\Gamma$.\smallskip

(ii) Since $\mathbf{Part}(\mathbb{R})$ is independent of $\mathbf{ZF}$, it follows from Theorem %
\ref{s5t9}(ii) that it is not provable in $\mathbf{ZF}$ that every compact metrizable subspace of the cube $[0,1]^{\mathbb{%
R}}$ has a base of size $\leq |\mathbb{R}|$.  We do not know if $\mathbf{M}(C, \hookrightarrow \lbrack 0,1]^{\mathbb{R}})$
implies every compact metrizable subspace of the cube $[0,1]^{\mathbb{R}}$
has a base of size $\leq |\mathbb{R}|$.\smallskip 

(iii) It is not provable in $\mathbf{ZFA}$ that every compact metrizable space with a
unique accumulation point embeds in $[0,1]^{\mathbb{R}}$. Indeed, in the Second Fraenkel model $\mathcal{N}$%
2 of \cite{hr}, there exists a disjoint family of two-element sets $%
\mathcal{A}=\{A_{n}:n\in \mathbb{N}\}$ whose union has no denumerable
subset. Let $A=\bigcup\mathcal{A}$, $\infty\notin A$, $X=A\cup\{\infty\}$ and, for every $n\in\mathbb{N}$, let $\rho_n$ be the discrete metric on $A_n$. Let $d$ be the metric on $X$ defined by ($\ast$) in Subsection \ref{s2.3}. Let $\mathbf{X}=\langle X, \tau(d)\rangle$. Then $\mathbf{X}$ is a compact metrizable space having $\infty$ as its unique accumulation point. Since, in $\mathcal{N}2$, the set
 $[0,1]^{\mathbb{R}}$ is well-orderable, while $\mathcal{A}$ has no choice function, it follows that $\mathbf{X}$ does not embed in the Tychonoff cube $[0,1]^{%
\mathbb{R}}$. This shows that the statement ``There exists a compact metrizable space with a unique accumulation point which is not embeddable in $[0,1]^{\mathbb{R}}$'' has a permutation model.
\end{remark}

\section{The list of open problems}
\label{s6}
For readers' convenience, let us repeat the open problems mentioned in Sections \ref{s4} and \ref{s5}.

\begin{enumerate}
\item Is $\mathbf{M}(C, WO)$ equivalent to or weaker than $\mathbf{M}(TB, WO)$ in $\mathbf{ZF}$?
\item  Does $\mathbf{M}(C, S)$ imply $\mathbf{CUC}$ in $\mathbf{ZF}$?
\item  Does $\mathbf{BPI}$ imply $\mathbf{CUC}$ in $\mathbf{ZF}$?
\item  Does $\mathbf{CUC}$ imply $\mathbf{M}(C, S)$ in $\mathbf{ZF}$?
\item  Does $\mathbf{M}(C, \hookrightarrow \lbrack
0,1]^{\mathbb{R}})$ imply $\mathbf{CAC}_{fin}$ in $\mathbf{ZF}$? (Cf. Question \ref{s5q7}(iv).)
\end{enumerate}


\begin{thebibliography}{BOP} 
\normalsize 
\baselineskip=17pt

\bibitem{bi} R. H. Bing, \emph{Metrization of topological Spaces},
Canadian J. Math. \textbf{3} (1951), 175--186.

\bibitem{br} N. Brunner, \emph{Products of compact spaces in the least permutation model},
Z. Math. Logik Grundlagen Math. \textbf{31} (1985), 441--448.

\bibitem{cr} P. J. Collins and A. W. Roscoe, \emph{Criteria for
metrisability}, Proc. Amer. Math. Soc. \textbf{90} (1984), 631--640.

\bibitem{sc} S. M. Corson, \emph{The independence of Stone's theorem from the Boolean prime ideal theorem}, https://arxiv.org/pdf/2001.06513.pdf.

\bibitem{hdkr} De la Cruz, E. J. Hall, P. Howard, K. Keremedis and J. E.
Rubin, \emph{Metric spaces and the axiom of choice}, Math. Logic Quart. \textbf{49} (2003), 455--466.

\bibitem{hdhkr} De la Cruz, E. J. Hall, P. Howard, K. Keremedis, and J. E.
Rubin, \emph{Unions and the axiom of choice,} Math. Logic Quart. \textbf{54} (2008), 652--665.

\bibitem{vD} E. K. van Douwen, \emph{Horrors of topology without AC: A nonnormal orderable space}, Proc. Amer. Math. Soc. \textbf{95} (1985), 101--105.

\bibitem{En} R. Engelking, \textit{General Topology}, Sigma Series in Pure Mathematics 6, Heldermann, Berlin, 1989. 

\bibitem{gt} C. Good and I. Tree, \emph{Continuing horrors of topology
without choice}, Topology Appl. \textbf{63} (1995), 79--90.

\bibitem{gtw} C. Good, I. Tree, W. Watson, \emph{On Stone's theorem and the axiom
of choice}, Proc. Amer. Math. Soc. \textbf{126} (1998), 1211--1218.

\bibitem{hkt} E.J. Hall, K. Keremedis and  E. Tachtsis, \emph{The existence of free ultrafilters on $\omega$ does not imply the extension of filters on $\omega$ to ultrafilters}, Math. Logic Quart. \textbf{59} (2013), 258--267.

\bibitem{hh} H. Herrlich, \textit{Axiom of Choice}, Lecture Notes in Mathematics 1875, Springer, New York, 2006.

\bibitem{how1} P. Howard, \textit{Limitations on the Fraenkel-Mostowski method of independence proofs}, J. Symb. Logic \textbf{38} (1973), 416--422.

\bibitem{hkrs} P. Howard, K. Keremedis, J. E. Rubin and A. Stanley, \emph{%
Paracompactness of metric spaces and the axiom of multiple choice}, Math.
Logic Quart. \textbf{46} (2000), 219--232.

\bibitem{hr} P. Howard and J. E. Rubin, \emph{Consequences of the axiom of
choice}, Math. Surveys and Monographs 59, A.M.S., Providence R.I., 1998.

\bibitem{hr1} P. Howard and J. E. Rubin, Other forms added to the ones from \cite{hr}, https://cgraph.inters.co/.

\bibitem{hs} P. Howard and J. Solski, \emph{The Strength of the $\Delta$-system Lemma}, Notre Dame J. Formal Logic \textbf{34} (1993), no. 1, 100--106.

\bibitem{hst} P. Howard, D. I. Saveliev, and E. Tachtsis, \emph{On the set-theoretic strength of the existence of disjoint cofinal sets in posets without maximal elements}, Math. Logic Quart. \textbf{62} (2016), no. 3, 155--176.

\bibitem{HowTach} P. Howard and E. Tachtsis, \emph{On metrizability and compactness of certain products without the axiom of choice}, submitted.

\bibitem{j} T. Jech, \emph{The Axiom of Choice}, North-Holland Publishing
Co., 1973.

\bibitem{j1} T. Jech, \emph{ Set Theory. The Third Millennium Edition,
revised and expanded,} Springer Monographs in Mathematics,
Springer, Berlin, 2003.

\bibitem{kk1} K. Keremedis, \emph{Consequences of the failure of the axiom of choice in the theory of Lindel\"of metric spaces}, Math. Logic Quart. \textbf{50} (2004), no. 2, 141--151.

\bibitem{k} K. Keremedis, \emph{On sequentially compact and related
notions of compactness of metric spaces in} $\mathbf{ZF}$, Bull. Pol.
Acad. Sci. Math. \textbf{64} (2016), 29--46.

\bibitem{kyri} K. Keremedis, \emph{Some notions of separability of metric
spaces in} $\mathbf{ZF}$ \textit{and their relation to compactness},
Bull. Pol. Acad. Sci. Math. \textbf{64} (2016), 109--136.

\bibitem{kkk} K. Keremedis, \emph{Clopen ultrafilters of} $\omega $ and
the cardinality of the Stone space $S(\omega )$ in $\mathbf{ZF}$, Topology Proc. \textbf{51} (2018), 1--17.

\bibitem{kerta} K. Keremedis and E. Tachtsis, \emph{On Loeb and weakly Loeb Hausdorff spaces}, Sci. Math. Jpn. Online \textbf{4} (2001), 15--19.

\bibitem{kert} K. Keremedis and E. Tachtsis, \emph{Compact metric spaces
and weak forms of the axiom of choice}, Math. Logic Quart. \textbf{47} (2001), 117--128.

\bibitem{kt} K. Keremedis and E. Tachtsis, \emph{Countable sums and products of
metrizable spaces in $\mathbf{ZF}$}, Math. Logic Quart. \textbf{51} (2005), 95--103.

\bibitem{keremTac} K. Keremedis and E. Tachtsis, \emph{Countable compact
Hausdorff spaces need not be metrizable in }$\mathbf{ZF}$, Proc.
Amer. Math. Soc. \textbf{135} (2007), 1205-1211.

\bibitem{kw} K. Keremedis and E. Wajch, \emph{On Loeb and sequential spaces in $\mathbf{ZF}$}, Topology Appl. \textbf{280} (2020), 101279.

\bibitem{kw1} K. Keremedis and E. Wajch, \emph{Cuf products and cuf sums of (quasi)-metrizable spaces in $\mathbf{ZF}$}, submitted, preprint available at http://arxiv.org/abs/2004.13097


\bibitem{ku1} K. Kunen, \emph{Set Theory. An Introduction to Independence Proofs}, North-Holland, Amsterdam, 1983.

\bibitem{Ku} K. Kunen, \emph{The Foundations of Mathematics}, Individual
Authors and College Publications, London, 2009.

\bibitem{lo} P. A. Loeb, \emph{A new proof of the Tychonoff theorem}, Amer. Math. Monthly
\textbf{72} (1965), 711--717.

\bibitem{naga} J. Nagata, \emph{On a necessary and sufficient condition on
metrizability}, J. Inst. Polytech., Osaka City University \textbf{1} (1950), 93--100.

\bibitem{nag} J. Nagata, \emph{Modern General Topology}, North-Holland, 1985.

\bibitem{pin}  D. Pincus,\emph{ Zermelo-Fraenkel consistency results by Fraenkel-Mostowski
methods}  J. Symbolic Logic \textbf{37} (1972), 721--743.

\bibitem{pin1} D. Pincus, \emph{ Adding Dependent Choice}, Annals Math. Logic \textbf{11} (1977), 105--145.

\bibitem{sm} Y. M. Smirnov, \emph{A necessary and sufficient condition for
metrizability of a topological space}, Dokl. Akad. Nauk. SSSR (N.S.) \textbf{77} (1951), 197--200.

\bibitem{et} E. Tachtsis, \emph{Disasters in metric topology without choice}, Comment. Math. Univ. Carolin. \textbf{43} (2002), 165--174.

\bibitem{tach1} E. Tachtsis, \emph{Infinite Hausdorff spaces may lack cellular families or infinite discrete spaces of size $\aleph_0$}, Topology Appl. \textbf{275} (2020), 106997.

\bibitem{w} S. Willard, \emph{General Topology}, Addison-Wesley Series in Math., Addison-Wesley Publishing Co., Reading, Massachusetts, 1968.
\end{thebibliography}
\end{document}